\newcommand{\stickT}{%
\setbox255=\hbox{\raise1ex\hbox{$\hspace{0.2pt}\,\star\,$}}
\mathord{\rlap{\hbox to\wd255{\hss\hbox{$|$}\hss}}
\box255}
}
\newcommand{\stickS}{%
\setbox255=\hbox{\raise0.6ex\hbox{$\scriptstyle\star$}}
\mathord{\rlap{\hbox to\wd255{\hss\hbox{$\scriptstyle|$}\hss}}
\box255}
}
\newcommand{\act}{\curvearrowright}
\newtheorem{thm}{Theorem}[section]
\newtheorem{defin}[thm]{Definition}
\newtheorem{prop}[thm]{Proposition}
\newtheorem{lemma}[thm]{Lemma}
\newtheorem{claim}[thm]{Claim}
\newtheorem{corol}[thm]{Corollary}
\newtheorem{que}[thm]{Question}
\newtheorem{rem}[thm]{Remark}
\renewenvironment{proof}[1][Proof]{ \noindent \textbf{#1: }}{$\Box$
\bigskip}
\def\Z{\Bbb Z}
\def\Q{\Bbb Q}
\def\P{\Bbb P}
\def\R{\Bbb R}
\def\P{\Bbb P}
\def\e{\varepsilon}
\def\d{\delta}
\def\U{\mathcal{U}}
\def\p^k{\tilde{\P}^k}
\DeclareMathOperator\id{id}
 \DeclareMathOperator\PSL{PSL}
 \DeclareMathOperator\SL{SL}
\DeclareMathOperator\rank{rank}
\DeclareMathOperator\dist{dist}
\newcommand{\RN}[1]{%
  \textup{\lowercase\expandafter{\romannumeral#1}}%
}\makeatother
\begin{document}

\title{
Maximal subgroups of $\SL(n,\Z)$\\
}

\author{
Tsachik Gelander and Chen Meiri \\
}

\maketitle
 
\newcounter{enumTemp}

{\begin{center}{\it Dedicated to the 75'th birthday of Herbert Abels}
\end{center}}

\section{Introduction}

More than three decades ago, in a remarkable paper \cite{MS81}, Margulis and Soifer proved existence of maximal subgroups of infinite index in $\SL(n,\Z)$, answering a question of Platonov. Moreover, they proved that there are uncountably many such subgroups. Since then, it is expected that there should be examples of various different nature. However, as the proof is non-constructive and relies on the axiom of choice, it is highly non-trivial to lay one's
hands on specific properties of the resulting groups. 

Our purpose here is to show that indeed, maximal subgroups $\Delta\le\SL(n,\Z)$ of different nature do exist. However, our methods say nothing about the intrinsic algebraic structure of $\Delta$. We do not gain any understanding about the abstract groups $\Delta$, instead we are focusing on the way it sits inside $\SL(n,\Z)$. The two point of views that we consider are:
\begin{itemize}
\item The associated permutation representation $\Gamma\act\Gamma/\Delta$.
\item The action of $\Delta$ on the associated projective space $\P=\P^{n-1}(\R)$. 
\end{itemize}

The main results of this paper are: 

\begin{thm}\label{thm counting} 
Let $n \ge 3$. There are $2^{\aleph_0}$ infinite index maximal subgroups in $\SL(n,\Z)$.
\end{thm}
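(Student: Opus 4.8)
I would deduce Theorem~\ref{thm counting} by reorganising the Margulis--Soifer construction \cite{MS81} into a binary branching process yielding an injection from Cantor space $\{0,1\}^{\N}$ into the set of infinite-index maximal subgroups of $\Gamma:=\SL(n,\Z)$; as $\Gamma$ is countable this also gives the matching upper bound $2^{\aleph_0}$. Two soft facts are used throughout. First, a profinitely dense subgroup of finite index in $\Gamma$ equals $\Gamma$ (its normal core is a finite-index normal subgroup contained in it, so density forces equality); hence for any profinitely dense $H\le\Gamma$ and any $g\in\Gamma$ the group $\langle H,g\rangle$ is either of infinite index or all of $\Gamma$. Second, every finite-index subgroup of $\SL(n,\Z)$, being a lattice in $\SL(n,\R)$, acts minimally on $\P=\P^{n-1}(\R)$, so any subgroup of $\Gamma$ with a non-dense orbit on $\P$ has infinite index.

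\textbf{The engine and the invariant.}
First I would fix a finitely generated free subgroup $F_0\le\Gamma$ that is profinitely dense, of infinite index, and carries a \emph{ping-pong table} on $\P$ avoiding a fixed nonempty open set $W$: finitely many pairwise disjoint closed balls, all disjoint from $W$, on which a free generating set of $F_0$ consisting of very proximal elements plays ping-pong. Such an $F_0$ is furnished by the proximality and Tits-alternative analysis underlying \cite{MS81} applied to the Zariski-dense group $\Gamma$; the congruence subgroup property ($n\ge3$) makes profinite density coincide with density in the congruence topology. The invariant I would maintain for every subgroup in the construction is that it carries such a $W$-avoiding ping-pong table --- which, via the dynamics on $\P$, forces a point with non-dense orbit, hence infinite index.

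\textbf{The recursion.}
Fix an enumeration $\Gamma=\{g_1,g_2,\dots\}$. For each $\omega\in\{0,1\}^{\N}$ I would build an increasing chain $F_0=\Delta_0^\omega\le\Delta_1^\omega\le\cdots$ of finitely generated, profinitely dense subgroups each carrying a $W$-avoiding ping-pong table (hence of infinite index), alternating between \emph{census} stages --- which process the $g_m$ in turn so that $\Delta^\omega:=\bigcup_m\Delta_m^\omega$ becomes maximal --- and \emph{free} stages, which produce the branching. At a census stage handling $g=g_m$: if $g\in\Delta_m^\omega$, or $g\notin\Delta_m^\omega$ but $\langle\Delta_m^\omega,g\rangle=\Gamma$, set $\Delta_{m+1}^\omega:=\Delta_m^\omega$; otherwise $\langle\Delta_m^\omega,g\rangle$ is of infinite index and I \emph{absorb} $g$, replacing $\Delta_m^\omega$ by a $W$-ping-pong enlargement of $\langle\Delta_m^\omega,g\rangle$. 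At a free stage I would pick (via the exclusion lemma below) an element $g\notin\Delta_m^\omega$ admitting both an absorbing enlargement and an \emph{excluding} one --- a $W$-ping-pong enlargement $H'\supseteq\Delta_m^\omega$ of infinite index with $g\notin H'$ but $\langle H',g\rangle=\Gamma$ --- and let the next unused bit of $\omega$ decide which to perform (absorb on $0$, exclude on $1$). No bookkeeping of excluded elements is needed: if $g$ was excluded at the stage producing $\Delta_{m+1}^\omega$, then $\langle\Delta_{m'}^\omega,g\rangle=\Gamma$ for all $m'>m$, so any later enlargement containing $g$ would contain $\Gamma$ and fail to have infinite index; thus $g$ stays out forever.

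\textbf{Conclusion and main obstacle.}
Granting this, each $\Delta^\omega$ is of infinite index (the fixed $W$ witnesses a non-dense orbit for the whole union), profinitely dense, and maximal: if $g_m\notin\Delta^\omega$ then at its census stage we were in neither the ``already in'' nor the absorbing case, so $\langle\Delta^\omega,g_m\rangle\supseteq\langle\Delta_{m+1}^\omega,g_m\rangle=\Gamma$. And $\omega\mapsto\Delta^\omega$ is injective: fixing canonical choices of enlargements makes the construction depend only on $\omega$, and the bit consumed at the $k$-th free stage is permanently recorded in $\Delta^\omega$ as the membership status there of the element chosen at that stage, so words first differing at a consumed bit give distinct subgroups. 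The main obstacle --- and the point going beyond the bare existence statement, where absorption is forced and exclusion occurs only as a by-product --- is the \emph{exclusion lemma}: a profinitely dense, infinite-index subgroup $H$ carrying a $W$-avoiding ping-pong table always admits some $g\notin H$ together with an excluding enlargement as above; i.e. one can adjoin to $H$ finitely many table-compatible very proximal elements that ``almost complete'' the pair $(H,g)$ to $\Gamma$ while keeping the index infinite and not capturing $g$. Proving this --- together with the easier parallel fact that $\langle H,g\rangle$, when of infinite index, can be re-equipped with a $W$-avoiding ping-pong table for the absorbing move --- is where the full strength of the proximality machinery of \cite{MS81} must be deployed; the remaining verifications are routine.
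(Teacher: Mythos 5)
Your proposal takes a genuinely different and substantially harder route than the paper. The paper never builds the maximal subgroups by exhaustion; it only builds $2^{\aleph_0}$ profinitely-dense, infinite-index Schottky subgroups $\langle\mathcal{S}_f\rangle$, $f\in\{0,1\}^{\N}$, with the property that the union of any two of them already generates all of $\SL(n,\Z)$. Each $\langle\mathcal{S}_f\rangle$ lies (by Zorn, as you also observe) in some maximal subgroup $M_f$ of infinite index, and the distinctness of the $M_f$ is automatic: no proper subgroup can contain both $\mathcal{S}_f$ and $\mathcal{S}_g$ for $f\neq g$. The mechanism is Venkataramana's theorem (Theorem~\ref{Thm - Venkataramana}): at each point $p_i$ of a prescribed sequence one prepares two rank-one unipotents $u_{i,1},u_{i,2}$ with the same attracting point but distinct fixed hyperplanes, so that $\langle u_{i,1},u_{i,2}\rangle\cong\Z^2$; since $\mathcal{S}_f$ picks one twin per $i$, the set $\mathcal{S}_f\cup\mathcal{S}_g$ for $f\neq g$ contains a commuting pair inside a profinitely-dense group, hence generates $\Gamma$. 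There is no enumeration of $\Gamma$, no census, no exhaustion, and no exclusion lemma.

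Your binary-branching Margulis--Soifer exhaustion, by contrast, carries two gaps. The first you acknowledge: the exclusion lemma, which is where nearly all the real work would go (the paper's Lemma~\ref{throwing} is the natural tool, but it only throws elements out and its hypotheses, relating the new points and hyperplanes to the accumulated attracting and repelling sets, require nontrivial care to satisfy). The second you understate: your invariant that each $\Delta_m^\omega$ ``carries a $W$-avoiding ping-pong table'' means $\Delta_m^\omega$ is generated by a Schottky family and is therefore a free group. But $\SL(n,\Z)$ has torsion; when the census reaches a finite-order $g$ with $\langle\Delta_m^\omega,g\rangle$ still proper, that group has torsion, no supergroup of it is free, and so no supergroup of it can carry a ping-pong table. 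The ``re-equipping'' step you call the easier parallel fact is thus impossible as stated, and the invariant must be weakened to something like ``profinitely dense of infinite index and containing a Schottky subgroup adequate for further surgery,'' with the Schottky bookkeeping carried along separately. At that point one is essentially re-deriving \cite{MS81}. The paper's Schottky-plus-Venkataramana construction settles the whole theorem in a few lines precisely because it sidesteps the exhaustion entirely and makes distinctness of the resulting maximal subgroups a built-in consequence of the commuting-unipotent trick rather than something to be tracked by hand.
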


\begin{thm}\label{thm dense} 
Let $n \ge 3$. There exists a maximal subgroup $\Delta$ of $\SL(n,\Z)$ which does not have a dense orbit in $\P$.  In particular, the limit set of $\Delta$ (in the sense of $\cite{CG00}$) is nowhere-dense. 
\end{thm}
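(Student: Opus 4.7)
The plan is to adapt the Margulis--Soifer construction so that it additionally controls the dynamics on $\P^{n-1}(\R)$. The idea is to run the Zorn extension inside a subgroup preserving some closed nowhere-dense set $K\subset \P^{n-1}(\R)$, so that the resulting maximal subgroup $\Delta$ inherits this invariance, and hence has limit set contained in $K$.

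\textbf{Step 1 (Schottky with profinite density).} Choose two elements $\gamma_1,\gamma_2\in\Gamma$ which are very proximal on $\P^{n-1}(\R)$ and in general position. For $N$ sufficiently large, $F:=\langle \gamma_1^N,\gamma_2^N\rangle$ is a free Schottky subgroup whose limit set $K:=L(F)$ is a Cantor-type subset of $\P^{n-1}(\R)$, hence closed and nowhere-dense; by construction $F\le \stab_\Gamma(K)$. The standard Weisfeiler / Margulis--Soifer strong-approximation technology permits choosing the $\gamma_i$ within prescribed ping-pong configurations so that $F$ simultaneously surjects onto every finite quotient $\SL(n,\Z/m\Z)$, i.e.\ $F$ is profinitely dense in $\Gamma$.

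\textbf{Step 2 (Restricted Zorn extension).} Consider
\[
\mathcal{F}:=\{H\le\Gamma\;:\; F\le H\le \stab_\Gamma(K),\ H\text{ profinitely dense in }\Gamma\}.
\]
Every member of $\mathcal{F}$ is proper in $\Gamma$ (since we arrange $\stab_\Gamma(K)\ne \Gamma$), and profinite density is preserved by ascending unions, so $\mathcal{F}$ is chain-closed and Zorn's lemma produces a maximal element $\Delta$. A standard argument forces $\Delta=\stab_\Gamma(K)$: any $\gamma\in \stab_\Gamma(K)\setminus\Delta$ would make $\langle \Delta,\gamma\rangle$ a strictly larger member of $\mathcal{F}$. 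Since $\Delta$ preserves $K$, the Conze--Guivarc'h / Benoist description gives $L(\Delta)\subset K$, which is nowhere-dense; each orbit closure $\overline{\Delta\cdot x}$ is then the union of the countable set $\Delta\cdot x$ with a nowhere-dense set, hence itself nowhere-dense, so no $\Delta$-orbit can be dense in $\P^{n-1}(\R)$.

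\textbf{Step 3 (Maximality in $\Gamma$).} It remains to show that $\Delta=\stab_\Gamma(K)$ is maximal in $\Gamma$, i.e.\ that for every $\gamma\in\Gamma\setminus \stab_\Gamma(K)$ we have $\langle \stab_\Gamma(K),\gamma\rangle=\Gamma$. This is a rigidity statement: adjoining any element moving $K$ must already generate all of $\Gamma$. This is the main obstacle of the proof, and strong approximation alone does not deliver it. I expect that one must refine the Step~1 construction --- coupling the ping-pong and profinite-density conditions with a genericity condition on $\gamma_1,\gamma_2$ --- so that the Zariski closure of $\langle \stab_\Gamma(K),\gamma\rangle$ is all of $\SL(n,\R)$ and one can bootstrap via an arithmetic rigidity / strong-approximation argument to conclude $\langle \stab_\Gamma(K),\gamma\rangle=\Gamma$. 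Securing this compatibility between the projective dynamical constraint on $K$ and the maximality requirement in $\Gamma$ is where the bulk of the technical work will lie.
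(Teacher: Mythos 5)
Your proposal takes a genuinely different route from the paper, and the difference is exactly what makes your Step~3 an unfillable gap rather than a ``technical'' one.

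\textbf{Where the approaches diverge.} You try to identify $\Delta$ explicitly: take a Schottky group $F$ with nowhere-dense limit set $K$, and aim for $\Delta=\stab_\Gamma(K)$. But notice your Step~2 Zorn argument is vacuous: $\mathcal{F}$ has $\stab_\Gamma(K)$ as its top element (once $F\le\stab_\Gamma(K)$ is profinitely dense, so is $\stab_\Gamma(K)$), so Zorn just hands you back $\stab_\Gamma(K)$ with no new information. The entire weight of the proof is then carried by Step~3, the claim that $\stab_\Gamma(K)$ is maximal in $\Gamma$. There is no reason to expect this. For a generic Schottky free group $F$, the stabilizer of its limit set is a thin group (typically not much larger than the commensurator of $F$), and ``arithmetic rigidity / strong approximation'' does not force a thin group to become maximal; strong approximation controls finite quotients and Zariski closure, neither of which addresses maximality in the abstract lattice. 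So Step~3 is not a deferred technicality---it is the statement you set out to prove, now transplanted onto a specific subgroup with no evidence it holds.

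\textbf{What the paper does instead.} The paper never tries to say \emph{which} group $\Delta$ is. It fixes two regions (in the Grassmannian $\mathbb{L}_{n-2}$, then dualizes) and enumerates the ``bad'' elements $g_1,g_2,\dots$ of $\SL(n,\Z)$---those that would move one region close to the other. It then builds, by induction, an ascending chain of profinitely-dense Schottky systems $(\mathcal{S}_k,\mathcal{A}_k,\mathcal{R}_k)$ such that $\langle\mathcal{S}_k,g_k\rangle=\SL(n,\Z)$ for every $k$; the engine for closing these inductive steps is Venkataramana's theorem (adjoining $g_k$ lets one produce a rank-$1$ unipotent commuting with another unipotent, which forces finite, hence full, index). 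Setting $H:=\langle\cup_k\mathcal{S}_k\rangle$ and taking \emph{any} maximal $\Delta\supseteq H$, one sees that $\Delta$ cannot contain any $g_k$ (else $\Delta=\SL(n,\Z)$), so $\Delta$ avoids all the bad elements and has no dense orbit. In other words: rather than trying to control what the Zorn extension yields, the paper makes the bad elements ``expensive,'' so that whatever maximal $\Delta$ emerges is automatically clean. Your proposal is missing this inversion, and is missing Venkataramana's theorem, which is the ingredient that actually makes the enumeration argument close.
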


\begin{thm}\label{thm trivial} 
Let $n \ge 3$. There exists an infinite index maximal subgroup $M$ of  $\PSL(n,\Z)$  and an element $g \in \PSL(n,\Z)$
such that $M \cap gMg^{-1}=\{id\}$.
\end{thm}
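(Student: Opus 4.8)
The plan is to realize $M$ not merely as a maximal subgroup of infinite index, but as a \emph{ping-pong group of very proximal transformations all of whose Schottky sets lie inside one fixed closed ball} $B\subsetneq\P$, and to take for $g$ a large power of a proximal element of $\PSL(n,\Z)$ that pushes $B$ off itself. Granting such an $M$, the conclusion is quick: every $h\in M\setminus\{\id\}$ is then proximal, with attracting fixed point $h^{+}$ lying in the limit set $\Lambda(M)\subseteq B$ (for a ping-pong group of proximal transformations, $\Lambda(M)$ is the closure of the attracting fixed points of its nontrivial elements). If $g$ satisfies $gB\cap B=\emptyset$, then $g\Lambda(M)\cap\Lambda(M)=\emptyset$. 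Suppose $h\in M\cap gMg^{-1}$ with $h\neq\id$: then $h^{+}\in\Lambda(M)$, while $g^{-1}hg\in M$ is proximal with attracting point some $\xi\in\Lambda(M)$, so $h=g(g^{-1}hg)g^{-1}$ is proximal with attracting point $g\xi\in g\Lambda(M)$. Since a proximal transformation has a unique attracting fixed point, $h^{+}=g\xi\in\Lambda(M)\cap g\Lambda(M)=\emptyset$, a contradiction; hence $M\cap gMg^{-1}=\{\id\}$. (Also $g\notin M$, since $g^{+}\notin B\supseteq\Lambda(M)$, so the statement is not vacuous.)

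To build such an $M$ one runs the construction behind Theorem~\ref{thm dense}, adding the one bookkeeping requirement that every Schottky set it produces lie inside $B$. One starts, as usual, from a profinitely dense free subgroup $F=\langle a,b\rangle\le\PSL(n,\Z)$ --- this is where $n\ge 3$ enters, through the congruence subgroup property --- chosen moreover so that $a,b$ are very proximal with $a^{\pm},b^{\pm}$ and their Schottky sets already inside $B$; imposing this on top of profinite density creates no conflict, as in \cite{MS81}. One then well-orders $\PSL(n,\Z)=\{\gamma_{\alpha}\}$, and at stage $\alpha$ one has a ping-pong group $H_{\alpha}\supseteq F$ with all Schottky data in $B$; by the key lemma of Margulis and Soifer \cite{MS81}, either $\langle H_{\alpha},\gamma_{\alpha}\rangle=\PSL(n,\Z)$ and one does nothing, or one replaces $\gamma_{\alpha}$ by an element $\gamma_{\alpha}'$ with $\langle H_{\alpha},\gamma_{\alpha}'\rangle=\langle H_{\alpha},\gamma_{\alpha}\rangle$ that is very proximal and whose new Schottky sets can be fitted into a still-unoccupied region of $B$, and one sets $H_{\alpha+1}=\langle H_{\alpha},\gamma_{\alpha}'\rangle$; one takes unions at limit stages. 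As in \cite{MS81}, the resulting $M$ is maximal: if $\gamma\notin M$, then at its stage $\langle H_{\alpha},\gamma\rangle$ cannot have been a proper subgroup (else the replacement would have put $\gamma$ into $M$), so $\langle M,\gamma\rangle\supseteq\langle H_{\alpha},\gamma\rangle=\PSL(n,\Z)$. It is a proper subgroup, since $\PSL(n,\Z)$ is finitely generated and every $H_{\alpha}$ is proper, and it has infinite index, since a finite-index $M$ would contain a congruence subgroup onto whose finite quotient the profinitely dense $F\le M$ surjects, which would force $M=\PSL(n,\Z)$. Finally, being a ping-pong group with all Schottky sets inside $B$, $M$ has every nontrivial element proximal --- a nontrivial reduced word is already proximal inside the finitely generated ping-pong subgroup it lies in --- with attracting fixed point in $B$; in particular $M$ is torsion-free and $\Lambda(M)\subseteq B$, exactly as used above.

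The element $g$ is easy to produce: since $B\subsetneq\P$ is a ball, $\PSL(n,\Z)$ contains a proximal element $\gamma$ whose attracting point avoids $B$ and whose repelling hyperplane is disjoint from $B$ --- the attracting points and repelling hyperplanes of the proximal elements of $\PSL(n,\Z)$ being dense, since it is a Zariski-dense lattice. Then for $N$ large the power $g:=\gamma^{N}$ maps $B$, which lies in the basin of the attracting point of $\gamma$, into an arbitrarily small neighbourhood of that point, so $gB\cap B=\emptyset$. This completes the plan.

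The main obstacle is the middle step: making the transfinite construction simultaneously fill up to a \emph{maximal} subgroup and never let a Schottky set leave the fixed ball $B$. This is exactly the technical core of the Margulis--Soifer method (and of Theorem~\ref{thm dense}): replacing an arbitrary $\gamma_{\alpha}$ by a very proximal $\gamma_{\alpha}'$ that generates the same group over $H_{\alpha}$ and has prescribed, arbitrarily small attracting/repelling sets rests on the machinery behind the Tits alternative, on the contraction properties of proximal elements on the flag variety, and on the ``escape from proper subvarieties'' estimates; all of these must be invoked uniformly enough that at each of the transfinitely many stages there is still room inside $B$ for the new, pairwise disjoint Schottky sets with the ping-pong inequalities intact. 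Once that is granted, the two remaining ingredients --- the existence of $\gamma\in\PSL(n,\Z)$ with $\gamma^{N}B$ disjoint from $B$, and the proximal-dynamics contradiction --- are routine.
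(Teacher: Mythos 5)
Your argument takes a genuinely different route from the paper's. The paper chooses $g$ to be an element \emph{fixing} a projective point $p$ and builds a profinitely-dense subgroup $H$ by enumerating all non-identity $h\in\SL(n,\Z)$; since the attracting neighbourhood $\mathcal{N}$ of the Schottky system and its $g$-translate are disjoint, one of $hp\notin\mathcal{N}$ or $(g^{-1}hg)p\notin\mathcal{N}$ always holds, and the ``throwing'' lemma (which rests on Venkataramana's commuting-unipotents theorem and Conze--Guivarc'h minimality) lets one ensure that one of $\langle\mathcal{S},h\rangle$, $\langle\mathcal{S},g^{-1}hg\rangle$ is already all of $\SL(n,\Z)$; any maximal $M\supseteq H$ then cannot contain both $h$ and $g^{-1}hg$. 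You instead build $M$ \emph{itself} as a transfinite ping-pong group with limit set inside a fixed ball $B$, take $g$ to push $B$ off itself, and derive $M\cap gMg^{-1}=\{\id\}$ from uniqueness of the attracting point of a proximal map. The final dynamical step in your plan is correct and clean; you avoid Venkataramana and Conze--Guivarc'h entirely, and get torsion-freeness of $M$ as a by-product.

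There is, however, a genuine gap in the middle step, and you are right to flag it, because it is more than bookkeeping. At each stage you need a very proximal $\gamma_\alpha'$ that simultaneously (i) generates the \emph{same} group as $\gamma_\alpha$ over $H_\alpha$ (so it lies in, say, $H_\alpha\gamma_\alpha H_\alpha$ --- this is what makes $\gamma_\alpha\in H_{\alpha+1}$ and drives maximality, since Margulis--Soifer themselves get maximality by Zorn, not by this recursion), (ii) plays ping-pong with all of $H_\alpha$, and (iii) has all four of its Schottky sets confined to a still-unoccupied region of $B$. The available replacements of the shape $a^N\gamma_\alpha$ or $c\,a^N\gamma_\alpha\,c^{-1}$ with $a,c\in H_\alpha$ give good control over the new attracting point (it lands near $a^+$ or $ca^+$, inside $B$) but essentially no control over the new repelling hyperplane, which is close to $\gamma_\alpha^{-1}L_a^-$ and may cut across $B$ in a way that collides with attracting balls already placed there; and as the stages accumulate these thin repelling strips pile up inside $B$. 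Exhibiting uniform ``room inside $B$'' is exactly what the paper's Claims~\ref{com claim 1}--\ref{com claim 3} labour to establish via compactness and small-deformation arguments, and without an analogue you have not shown the recursion can be carried through, hence not that your $M$ exists. (A smaller point: your finite-index argument does not need CSP --- a profinitely-dense proper subgroup cannot have finite index, as one sees by passing to the normal core.)
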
  

\begin{thm}\label{thm not 2-trans} 
Let $n \ge 3$. There exists a primitive permutation action of $\SL(n,\Z)$ which is not 
2-transitive. 
\end{thm}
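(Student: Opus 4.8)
The plan is to exhibit an infinite-index maximal subgroup $\Delta\le\Gamma:=\SL(n,\Z)$ (for $n\ge 3$) whose coset action $\Gamma\act\Gamma/\Delta$ — automatically primitive, since $\Delta$ is maximal — has at least three $(\Delta,\Delta)$-double cosets; as a transitive action with point stabiliser $\Delta$ is $2$-transitive precisely when $\Gamma$ has only two $(\Delta,\Delta)$-double cosets, this is enough. For $\Delta$ I would take the maximal subgroup produced in the proof of Theorem~\ref{thm dense} — a refinement of the Margulis--Soifer construction \cite{MS81} — so that its limit set $L:=L_\Delta\subseteq\P$ is a nonempty, closed, $\Delta$-invariant, nowhere-dense set, and I would build two extra features into that construction, described below.

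The role of the limit set is that $\gamma\Delta\mapsto\gamma L$ is a well-defined $\Gamma$-equivariant map from $\Gamma/\Delta$ to the set of closed subsets of $\P$ (a limit set is invariant under its own group), and it is \emph{injective} because $\stab_\Gamma(L)=\Delta$: indeed $\stab_\Gamma(L)\supseteq\Delta$, while $\stab_\Gamma(L)\ne\Gamma$ since $\Gamma$ has no proper nonempty closed invariant subset of $\P$ (every $\Gamma$-orbit on $\P$ is dense, as the attracting fixed points of proximal elements of $\Gamma$ are dense in $\P$ and lie in every orbit closure), so maximality of $\Delta$ forces $\stab_\Gamma(L)=\Delta$. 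Hence, for $\gamma\notin\Delta$, the way $\gamma L$ meets $L$ — in particular whether $\gamma L\cap L$ is empty — is an invariant of the double coset $\Delta\gamma\Delta$. So it suffices to produce one $\gamma_0\notin\Delta$ with $\gamma_0L\cap L\ne\emptyset$ and one $g\notin\Delta$ with $gL\cap L=\emptyset$: then $\Delta$, $\Delta\gamma_0\Delta$, $\Delta g\Delta$ are three distinct double cosets, the last distinction because every element $\delta_1\gamma_0\delta_2$ of $\Delta\gamma_0\Delta$ (with $\delta_i\in\Delta$) satisfies $(\delta_1\gamma_0\delta_2)L\cap L=\delta_1(\gamma_0L\cap L)\ne\emptyset$.

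The two features are the following. First, I would arrange that $\Delta$ contains a proximal element $a$ together with a conjugate $b=\gamma_0a\gamma_0^{-1}$ for some $\gamma_0\in\Gamma\setminus\Delta$. Since the ping-pong free group underlying the construction is ours to choose, one may put both $a$ and $\gamma_0a\gamma_0^{-1}$ among its free generators (for generic $\gamma_0$ the attracting points and repelling hyperplanes stay in general position, so the group remains free and ping-pongs); note that $\gamma_0$ — which cannot lie in a free group while conjugating one of its free generators to another — lies outside that free group, and it can be kept out of $\Delta$ through the maximalisation. Then the attracting fixed points $x_a^+$ and $x_b^+=\gamma_0x_a^+$ both lie in $L$, so $\gamma_0x_a^+\in\gamma_0L\cap L$ while $\gamma_0L\ne L$. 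Second, I would strengthen the construction so that $L$ avoids some projective hyperplane $H_0$, i.e.\ $L$ is a compact subset of the affine chart $\P\setminus H_0$ (keep the attracting points and the ping-pong neighbourhoods inside a bounded region of a fixed chart, and confine $L_\Delta$ to a prescribed nowhere-dense subset of that region during the maximalisation). Then $\{H: H\cap L=\emptyset\}$ is a nonempty open set of hyperplanes, so — using that the pairs consisting of the attracting point and the repelling hyperplane of a proximal element of $\Gamma$ form a dense subset of the product of $\P$ with the space of projective hyperplanes — one may pick a proximal $g_0\in\Gamma$ with repelling hyperplane disjoint from $L$ and attracting point outside $L$. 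A large power $g:=g_0^{\,k}$ then carries $L$ into a small neighbourhood of $x_{g_0}^+$ disjoint from $L$, so $gL\cap L=\emptyset$; and $g\notin\Delta$ automatically, since $\Delta$ preserves $L$ whereas $gL\ne L$. Together with the first feature this gives the three double cosets, hence the non-$2$-transitivity of the (primitive) action $\Gamma\act\Gamma/\Delta$.

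I expect the main obstacle to be the geometric bookkeeping behind the second feature: re-running the proof of Theorem~\ref{thm dense} so that the limit set of the resulting maximal subgroup is confined to an affine chart while simultaneously containing a proximal element and a $\gamma_0$-conjugate of it — that is, threading these side conditions through the maximalisation without sacrificing maximality or infinite index. Everything else is soft: the equivariant injection and the identity $\stab_\Gamma(L)=\Delta$, the separation of the three double cosets, and the proximal-dynamics argument producing the disjoint translate.
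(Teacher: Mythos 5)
Your approach is genuinely different from the paper's. The paper proves Theorems \ref{thm trivial} and \ref{thm not 2-trans} simultaneously: it builds a profinitely-dense subgroup $H$ together with elements $g,k\in\SL(n,\Z)$ such that $\langle H,k\rangle=\SL(n,\Z)$, $kHk^{-1}\cap H\neq\{\id\}$, and (via an enumeration of all non-identity $h$, arranging $\langle \mathcal S_i,h\rangle=\SL(n,\Z)$ or $\langle \mathcal S_i,g^{-1}hg\rangle=\SL(n,\Z)$ at each stage) $gMg^{-1}\cap M=\{\id\}$ for every proper $M\supseteq H$. Since in a $2$-transitive action all pair stabilisers are conjugate, and here $\stab(M,kM)=M\cap kMk^{-1}$ is nontrivial while $\stab(M,gM)=M\cap gMg^{-1}$ is trivial, the action on $\SL(n,\Z)/M$ is primitive but not $2$-transitive. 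The invariant used is thus the triviality of $M\cap\gamma M\gamma^{-1}$, which the Schottky machinery (augmented with the open set $\mathcal N$) controls directly element by element. Your invariant is instead the limit set $L=L_\Delta$ and whether $\gamma L\cap L=\emptyset$; the soft framework (well-definedness of $\gamma\Delta\mapsto\gamma L$, the identity $\stab_\Gamma(L)=\Delta$ by maximality plus minimality of $\Gamma\act\P$, and the double-coset invariance of ``$\gamma L\cap L=\emptyset$'') is all correct. Your ``Feature~1'' is essentially what Lemma~\ref{starting} already does, except that your remark that $\gamma_0$ cannot lie in a free group conjugating one free generator to another only keeps $\gamma_0$ out of $H$, not out of $\Delta$; keeping it out of $\Delta$ needs $\langle H,\gamma_0\rangle=\SL(n,\Z)$, which the toolkit does provide but which you should say explicitly.

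The real gap is ``Feature~2'', and I think you underestimate it by calling it bookkeeping. What the construction in Theorem~\ref{thm dense} controls about $L_\Delta$ (as opposed to the Cantor set $L_H$, which you can confine at will) is that it avoids a fixed open \emph{ball}: the enumeration forbids $\Delta$ from pushing a neighbourhood of one point of the dual space into a neighbourhood of another, and $L$ sits in every orbit closure. Avoiding a ball is strictly weaker than avoiding a neighbourhood of a \emph{hyperplane}; a closed nowhere-dense subset of $\P$ need not be disjoint from any hyperplane, and nothing in the Zorn step gives you control over $L_\Delta$ ``during the maximalisation''---all control must be built into the pre-Zorn enumeration. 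If $L_\Delta$ does meet every hyperplane, then your proximal-contraction argument for producing $g$ with $gL\cap L=\emptyset$ breaks, since the part of $L$ lying on the repelling hyperplane of $g_0$ is not contracted toward $x^+_{g_0}$. To salvage it you would need to enumerate a different and larger family of ``bad'' elements (those that push some point into a hyperplane neighbourhood) while simultaneously retaining profinite density and Feature~1; it is not evident from the paper that the Schottky machinery can meet all of these demands at once. The paper's choice of invariant---pair stabilisers---is exactly what lets the argument run inside the Schottky framework without ever needing to control the geometry of the limit set of the maximal subgroup itself.
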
 

\begin{rem}\normalfont The theorems remain true also for 
$\SL(n,\Q)$ instead of $\SL(n,\Z)$.
\end{rem}

\begin{rem}\normalfont
Recall that the Margulis--Soifer theorem is much more general, i.e. holds for any finitely generated non-virtually-solvable linear group $\Gamma$. Our results however relies on special properties of $\SL(n,\Z),~n\ge 3$. In particular
one important ingredient for us is the beautiful result of Venkataramana about commuting unipotents, Theorem \ref{Thm - Venkataramana}. Another ingredient is the result of Conze and Guivarc'h, Theorem \ref{them - conze}. Some of our results can be extended to the class of arithmetic groups of higher $\Q$-rank.
\end{rem}

{\flushleft{\bf Acknowledgment:}} The first author was partially supported by ISF-Moked grant 2095/15. The second author was partially supported by ISF grant 662/15.

\section{Preliminaries}

In this paper we always assume that $n \ge 3$.

\subsection{Projective space}
Let $n \ge 3$. The letter $\P$ denotes the $(n-1)$-dimensional real projective space and fix some compatible metric $\dist_\P$ on $\P$. For every $0 \le k \le n-1$, the set $\mathbb{L}_k$ of $k$-dimensional subspaces of $\P$ can be endowed with the metric defined by 
$$
 \dist_{\mathbb{L}_k}(L_1,L_2):=\max\{\dist_\P(x,L_i) \mid x \in L_{3-i} \text{ for } 1 \le i \le 2\}
 $$ 
 for every $L_1,L_2 \in \mathbb{L}_k$. Note that 
$\mathbb{L}_k$ is naturally homeomorphic to the Grassmannian $\mathrm{Gr}(k+1,\R^n)$.  For $\e>0$ and a subset $A \subseteq \P$
we denote $(A)_\e:=\{x \in \P \mid \dist_\P(x,A) < \e\}$ and  $[A]_\e:=\{x \in \P \mid \dist_\P(x,A) \le \e\}$. 
If $A=\{p\}$ then we usually write $(p)_\e$ and  $[p]_\e$ instead of $(A)_\e$ and  $[A]_\e$.

\subsection{Unipotent elements}
\begin{defin}[Rank 1 unipotent elements]\label{def - rank 1 unipotent} We say that a unipotent element $u$ has rank 1 if $\rank(u-\textrm{I}_n)=1$. The point $p_u\in \P$ which is induced by the euclidean line ${\{ux-x\mid x \in \R^n\}} $ is called the point of attraction of $u$.  The $(n-2)$-dimensional subspace  $L_u\subseteq \P$ which is induced by the euclidean $(n-1)$-dimensional space ${\{x \in \R^n \mid ux=x\}}$ is called the fixed hyperplane of of $u$. The set of rank-1 unipotent elements in $SL(n,\Z)$ is denoted by $\mathcal{U}$. 
\end{defin}

The following two lemmas follow directly from the definition of $\U$ and are stated for future reference. 

\begin{lemma}[Structure of unipotent elements]\label{lemma - structure of unipotent} 
The set $\U$ can be divided into equivalence classes in the following way: $u,v \in \U$ are equivalent if 
there exist non-zero integers $r$ and $s$ such that $u^s=v^r$.
The map $u \mapsto (p_u,L_u)$ is a bijection between equivalence classes in $\U$ and the set of pairs $(p,L)$ where $p \in \P$ is a rational point and $ L \subseteq \mathbb{L}_{n-2}$ 
is an $(n-2)$-dimensional rational subspace which contains $p$.
\end{lemma}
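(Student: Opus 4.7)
The plan is to exploit the standard rank-one factorization of the nilpotent part $N_u:=u-\textrm{I}_n$. Since $\rank(N_u)=1$ and $N_u^2=0$ (the second coming from the fact that a rank-one unipotent in $\SL(n,\Z)$ satisfies $(u-\textrm{I}_n)^2=0$), I can write $N_u = a_u b_u^{\top}$ with a column vector $a_u \in \R^n$ and a row vector $b_u^{\top}$, and the identity $N_u^2=0$ translates exactly to $b_u^{\top}a_u = 0$. Because $N_u$ has integer entries and is rank one, the vectors $a_u,b_u$ can be chosen in $\Z^n$. By Definition~\ref{def - rank 1 unipotent}, the point of attraction $p_u$ is the projective class of $a_u$ and the fixed hyperplane $L_u$ is $\{b_u^{\top}x = 0\}$, so both are rational and the relation $b_u^{\top}a_u = 0$ says $p_u \in L_u$. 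This produces the map of the statement and shows that its image lies in the claimed set.

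Next, I would verify well-definedness on equivalence classes and injectivity. Since $N_u^2=0$, the binomial formula collapses to $u^k = \textrm{I}_n + kN_u$ for every $k\in\Z$, so the equation $u^s = v^r$ is equivalent to $sN_u = rN_v$. Consequently, if $u\sim v$ then $N_u$ and $N_v$ are scalar multiples of each other, which forces $(p_u,L_u)=(p_v,L_v)$. Conversely, assume $(p_u,L_u)=(p_v,L_v)$. Equality of the image lines gives $a_v = \mu a_u$, equality of the kernels gives $b_v = \nu b_u$, hence $N_v = (\mu\nu) N_u$. Comparing any nonzero entry of the integer matrices $N_u,N_v$ shows $\mu\nu = r/s$ is rational, and clearing denominators yields $sN_v = rN_u$, i.e.\ $v^s = u^r$ with $r,s\in\Z\setminus\{0\}$, so $u\sim v$.

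For surjectivity, given a rational $p\in\P$ on a rational $(n-2)$-dimensional subspace $L$ with $p\in L$, choose primitive integer vectors $a\in\Z^n$ representing $p$ and $b\in\Z^n$ whose annihilator in $\R^n$ lifts $L$. The incidence $p\in L$ translates to $b^{\top}a = 0$, so $N := ab^{\top} \in \Mat_n(\Z)$ satisfies $N^2 = 0$ and has rank one. Then $u := \textrm{I}_n + N$ is unipotent, has determinant $1$, and lies in $\U$; a direct check gives $p_u = p$ and $L_u = L$.

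There is no genuine obstacle: the whole proof is elementary linear algebra once one observes $N_u^2=0$. The only place that requires any care is the step in the injectivity argument that upgrades a real proportionality constant between $N_u$ and $N_v$ to a rational one, and this is the conceptual reason why the equivalence relation in the statement (rather than literal equality) is the correct one.
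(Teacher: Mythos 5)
Your proof is correct. The paper itself offers no argument for this lemma — it is stated as "following directly from the definition of $\U$" — so there is no printed proof to compare against, but the elementary linear-algebra route you take (write $u-\textrm{I}_n=a_ub_u^{\top}$ with $b_u^{\top}a_u=0$, observe $u^k=\textrm{I}_n+k(u-\textrm{I}_n)$, read off $p_u=[a_u]$ and $L_u=\{b_u^{\top}x=0\}$, and deduce the rationality and the bijection) is exactly the standard verification the authors are leaving implicit. You correctly identify the one non-trivial point, namely that equality of the pairs $(p_u,L_u)=(p_v,L_v)$ only yields a \emph{real} proportionality $N_v=\lambda N_u$ a priori, and that integrality of both matrices is what forces $\lambda\in\Q$ and hence the commensurability $u^s=v^r$; this is indeed why the lemma is phrased with an equivalence relation rather than equality, and your surjectivity argument (take primitive integer vectors $a,b$ with $b^{\top}a=0$ and set $u=\textrm{I}_n+ab^{\top}$) is also sound.
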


\begin{lemma}[Dynamics of unipotent elements]\label{lemma - dynamic of unipotent} Let $u \in \mathcal{U}$.  For every $\varepsilon>0$ and every $\delta>0$ there exists a constant $c$ such that if 
$m \ge c$
and $v=u^m $, then  $v^{k}(x) \in (p_u)_{\varepsilon}$ for every $x \in \P\setminus (L_u)_{\delta}$ and every  $k \ne 0$. Note that the previous lemma implies that $p_u=p_v$ and $L_u=L_v$.
\end{lemma}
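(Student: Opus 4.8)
The plan is a direct computation exploiting the special form of a rank-$1$ unipotent. Write $N := u - \mathrm{I}_n$. Since $\rank N = 1$ and $N$ is nilpotent (because $u$ is unipotent), $N$ is the rank-one operator $x \mapsto \langle \eta, x\rangle\,\xi$ for suitable nonzero vectors $\xi,\eta \in \R^n$; then $N^{2} = \langle\eta,\xi\rangle\,N$, and since $N\ne 0$ is nilpotent this forces $\langle\eta,\xi\rangle = 0$, i.e. $N^{2} = 0$. Consequently $\image N = \R\xi$ induces $p_u = [\xi]$, the hyperplane $\ker N = \eta^{\perp}$ induces $L_u$, and $u^{j} = (\mathrm{I}_n+N)^{j} = \mathrm{I}_n + jN$ for every $j \in \Z$; in particular $v^{k} = u^{mk} = \mathrm{I}_n + mk\,N$ for all $k$.

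After normalizing $\xi,\eta$ to unit vectors, the first step is to turn the hypothesis $x \in \P\setminus(L_u)_\delta$ into a linear estimate. As the fixed metric on $\P$ is bi-Lipschitz equivalent to the angular metric and $L_u$ is exactly the projectivization of $\eta^{\perp}$, there is $\delta' = \delta'(\delta) > 0$ such that any unit vector $x$ representing a point of $\P\setminus(L_u)_\delta$ satisfies $|\langle\eta,x\rangle| \ge \delta'$ (for the angular metric itself $\dist_\P([x],L_u) = \arcsin|\langle\eta,x\rangle|$, so one may take $\delta' = \sin\delta$ up to the comparison constant).

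Now fix such an $x$, also viewed as a unit vector, and fix $k \ne 0$. Since $m\ge 1$, $k\ne 0$ and $|\langle\eta,x\rangle|\ge\delta'>0$, the scalar $mk\langle\eta,x\rangle$ is nonzero, so
\[
v^{k}(x) \;=\; \bigl[\, x + mk\langle\eta,x\rangle\,\xi \,\bigr] \;=\; \Bigl[\, \tfrac{1}{mk\langle\eta,x\rangle}\,x + \xi \,\Bigr],
\]
and the Euclidean norm of the first summand equals $\dfrac{1}{|mk|\,|\langle\eta,x\rangle|} \le \dfrac{1}{m\,\delta'}$, using $|k|\ge 1$. Thus $v^{k}(x)$ is represented by a vector within Euclidean distance $1/(m\delta')$ of the unit vector $\xi$, and the bi-Lipschitz comparison gives $\dist_\P\bigl(v^{k}(x),\,p_u\bigr) \le C/m$ for a constant $C = C(\delta)$, valid once $m$ is large enough that $1/(m\delta')<1/2$. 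Hence any $c$ strictly exceeding $\max\{C/\varepsilon,\,2/\delta'\}$ works: for all $m \ge c$ one gets $\dist_\P(v^{k}(x),p_u) < \varepsilon$, uniformly in $k\ne 0$ and in $x\in\P\setminus(L_u)_\delta$. Finally $v - \mathrm{I}_n = mN$ has the same image and kernel as $N$, so $p_v = p_u$ and $L_v = L_u$ --- alternatively this follows from Lemma \ref{lemma - structure of unipotent}, since $v = u^{m}$ lies in the equivalence class of $u$.

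The argument is elementary throughout. The only steps needing care are the quantitative comparison between $\dist_\P$ and the underlying Euclidean/angular data (a routine consequence of compactness), and --- the real content of the statement --- the observation that the error decays like $1/|mk|$ rather than merely $1/m$, which is precisely why a single threshold $c$ serves all nonzero powers $k$ simultaneously.
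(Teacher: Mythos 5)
Your proof is correct and complete. The paper does not actually supply a proof of this lemma --- it merely states that Lemmas \ref{lemma - structure of unipotent} and \ref{lemma - dynamic of unipotent} ``follow directly from the definition of $\mathcal{U}$'' --- and your explicit computation (writing $u = \mathrm{I}_n + N$ with $N$ rank one and $N^2 = 0$, so that $v^k = \mathrm{I}_n + mkN$, then projectivizing and estimating) is precisely the kind of direct argument the authors had in mind. You correctly isolate the one non-trivial point, namely that the error decays like $1/|mk|$ uniformly in $k \neq 0$, which is exactly why a single threshold $c$ depending only on $\varepsilon$ and $\delta$ suffices for all nonzero powers at once.
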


\subsection{Schottky systems}

\begin{defin} Assume that $\mathcal{S}$ is a non-empty subset of $\mathcal{U}$ and $\mathcal{A} \subseteq \mathcal{R}$ are closed subsets of $\P$. We say that $\mathcal{S}$ is a Schottky set with respect to the attracting set  $\mathcal{A}$ and the repelling set $\mathcal{R}$ and call the triple 
$(\mathcal{S},\mathcal{A},\mathcal{R})$ a Schottky system if for every $u \in \mathcal{S}$ there exist two positive
numbers  $\delta_u \ge \e_u$ such that the following properties hold:
\begin{enumerate}
\item $u^{k}(x) \in (p_u)_{\varepsilon_u}$ for every $x \in \P\setminus (L_u)_{\delta_u}$ and every  $k \ne 0$;
\item If $u \ne v \in \mathcal{S}$ then $(p_u)_{\varepsilon_u} \cap (L_v)_{\delta_v}=\emptyset$;
\item $\cup_{u \in \mathcal{S}}(p_u)_{\varepsilon_u} \subseteq \mathcal{A}$;
\item $\cup_{u \in \mathcal{S}}(L_u)_{\delta_u} \subseteq \mathcal{R}$.
\end{enumerate}
\end{defin} 
\begin{defin} 
The Schottky system $(\mathcal{S},\mathcal{A},\mathcal{R})$ is said to be profinitely-dense if $\mathcal{S}$ generates a profinitely-dense subgroup of $\SL(n,\Z)$.
We say that the Schottky system $(\mathcal{S}_+,\mathcal{A}_+,\mathcal{R}_+)$ contains the Schottky system $(\mathcal{S},\mathcal{A},\mathcal{R})$ if 
$\mathcal{S}_+ \supseteq \mathcal{S}$, $\mathcal{A}_+ \supseteq \mathcal{A}$ and $\mathcal{R}_+ \supseteq \mathcal{R}$. 
\end{defin}

\begin{lemma}\label{adding} Let $(\mathcal{S},\mathcal{A},\mathcal{R})$ be a Schottky system. 
Assume that $[p]_\e \cap \mathcal{A}=\emptyset$ and $[L]_\delta \cap \mathcal{R}=\emptyset$ where $\d \ge\e >0$ and 
$p$ is a rational point which is continued in a rational 
subspace $L \in \mathbb{L}_{n-2}$.  Denote $\mathcal{A}_+=\mathcal{A} \cup [p]_\e$ and $\mathcal{R}_+=\mathcal{R} \cup [L]_\delta$. 
Then there exist $v \in \mathcal{U}$ with $p=p_v$, $L=L_v$ such that  $(\mathcal{S}_+,\mathcal{A}_+,\mathcal{R}_+)$ is  Schottky system which contains $(\mathcal{S},\mathcal{A},\mathcal{R})$ where $\mathcal{S}_+:=\mathcal{S}\cup\{v\}$.
\end{lemma}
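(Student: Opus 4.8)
The plan is to realize $v$ as a suitably high power of a rank-$1$ unipotent element having the prescribed attracting point $p$ and fixed hyperplane $L$, and then to verify the four Schottky axioms with the parameters $\e_v:=\e$ and $\d_v:=\d$ (so that $\d_v\ge\e_v>0$, as required). Since $p$ is a rational point contained in the rational $(n-2)$-dimensional subspace $L$, Lemma \ref{lemma - structure of unipotent} furnishes some $u_0\in\U$ with $p_{u_0}=p$ and $L_{u_0}=L$. Applying Lemma \ref{lemma - dynamic of unipotent} to $u_0$ with the numbers $\e$ and $\d$ produces a constant $c$, and I would set $v:=u_0^{m}$ for any fixed integer $m\ge\max\{c,1\}$. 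Then $v\in\U$ (a nonzero power of a rank-$1$ unipotent is again one), $p_v=p$, $L_v=L$, and axiom (1) for $v$ with $\e_v=\e$, $\d_v=\d$ is exactly the conclusion of Lemma \ref{lemma - dynamic of unipotent}. One also notes $v\notin\mathcal{S}$: if $v\in\mathcal{S}$ then $p=p_v\in(p_v)_{\e_v}\subseteq\mathcal{A}$, contradicting $[p]_\e\cap\mathcal{A}=\emptyset$.

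Next I would check axioms (1)--(4) for $(\mathcal{S}_+,\mathcal{A}_+,\mathcal{R}_+)$, keeping the old parameters $\e_u,\d_u$ for every $u\in\mathcal{S}$. For such $u$, axiom (1) is unaffected, while axioms (3) and (4) persist since $\mathcal{A}\subseteq\mathcal{A}_+$ and $\mathcal{R}\subseteq\mathcal{R}_+$. For the new element $v$, axiom (1) holds by construction, and axioms (3), (4) hold because $\e_v\le\e$ and $\d_v\le\d$ force $(p_v)_{\e_v}=(p)_\e\subseteq[p]_\e\subseteq\mathcal{A}_+$ and $(L_v)_{\d_v}=(L)_\d\subseteq[L]_\d\subseteq\mathcal{R}_+$. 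The only axiom with genuine content is (2), and only for the pairs $\{u,v\}$ with $u\in\mathcal{S}$.

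For these I would argue as follows. Because $p\in L$ and $\e\le\d$, any $x$ with $\dist_\P(x,p)\le\e$ also satisfies $\dist_\P(x,L)\le\e\le\d$, so $[p]_\e\subseteq[L]_\d$; combined with the hypothesis $[L]_\d\cap\mathcal{R}=\emptyset$, this gives that $(p_v)_{\e_v}\subseteq[p]_\e$ is disjoint from $\mathcal{R}$, hence from $(L_u)_{\d_u}\subseteq\mathcal{R}$. For the opposite containment I would use the standing requirement $\mathcal{A}\subseteq\mathcal{R}$ from the definition of a Schottky system: it yields $[L]_\d\cap\mathcal{A}\subseteq[L]_\d\cap\mathcal{R}=\emptyset$, so $(L_v)_{\d_v}\subseteq[L]_\d$ is disjoint from $\mathcal{A}$ and therefore from $(p_u)_{\e_u}\subseteq\mathcal{A}$. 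Together with the trivial inclusions $\mathcal{S}_+\supseteq\mathcal{S}$, $\mathcal{A}_+\supseteq\mathcal{A}$, $\mathcal{R}_+\supseteq\mathcal{R}$, this shows $(\mathcal{S}_+,\mathcal{A}_+,\mathcal{R}_+)$ is a Schottky system containing $(\mathcal{S},\mathcal{A},\mathcal{R})$. The construction itself is immediate from the two structural lemmas; the only place demanding a moment's care is this last step, where the cross-disjointness of $(p_u)_{\e_u}$ from $(L_v)_{\d_v}$ is not covered by the stated hypotheses on $[p]_\e$ and must instead be extracted from the inclusion $\mathcal{A}\subseteq\mathcal{R}$. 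I therefore expect that bookkeeping point, rather than any hard estimate, to be the crux.
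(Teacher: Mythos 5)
Your proposal is correct and follows the same route as the paper: use Lemma \ref{lemma - structure of unipotent} to realize $(p,L)$ as $(p_{u_0},L_{u_0})$ for some $u_0\in\U$, then Lemma \ref{lemma - dynamic of unipotent} to take a high power $v=u_0^m$ with $\e_v=\e$, $\d_v=\d$. The paper states this in two sentences and leaves the Schottky-axiom verification implicit; your additional bookkeeping (in particular the use of $p\in L$ to get $[p]_\e\subseteq[L]_\d$ and of $\mathcal{A}\subseteq\mathcal{R}$ for the cross-disjointness in axiom (2)) is exactly the routine check the authors elided.
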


\begin{proof} Lemma \ref{lemma - structure of unipotent} implies that there exists  $u \in \mathcal{U}$ such that $p_u=p$ and $L_u=L$.
Lemma \ref{lemma - dynamic of unipotent}  implies that there exists $m \ge 1$ such that $v:=u^m$ satisfies the required properties.  
\end{proof}

The following lemma is a version of the well known ping-pong lemma:

\begin{lemma}[Ping-pong]\label{lemma - ping pong} Let $(\mathcal{S},\mathcal{A},\mathcal{R})$ be a 
Schottky system. Then the natural homomorphism $*_{u \in \mathcal{S}}\langle u\rangle \rightarrow \langle \mathcal{S}\rangle $ is an isomorphism. 
\end{lemma}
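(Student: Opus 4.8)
The plan is to run the standard ping-pong argument, using the dynamical properties encoded in the definition of a Schottky system to verify the hypotheses. Set $G = *_{u\in\mathcal S}\langle u\rangle$ and let $\phi\colon G\to\langle\mathcal S\rangle$ be the natural surjection; we must show $\phi$ is injective. A nontrivial element of $G$ has a reduced form $w = u_1^{k_1}u_2^{k_2}\cdots u_\ell^{k_\ell}$ with each $u_i\in\mathcal S$, each $k_i\neq 0$, and $u_i\neq u_{i+1}$ for all $i$. It suffices to show that $\phi(w)$ acts nontrivially on $\P$, hence is not the identity of $\SL(n,\Z)$.

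First I would fix a base point. Since $\mathcal S$ is nonempty, pick $v\in\mathcal S$; I claim there is a point $x_0\in\P$ lying outside $(L_u)_{\delta_u}$ for every $u\in\mathcal S$ that equals $u_1$ — more carefully, it is enough to choose $x_0\in(p_v)_{\varepsilon_v}$ where $v$ is chosen distinct from $u_1$ (possible if $|\mathcal S|\geq 2$; the case $|\mathcal S|=1$ is the elementary fact that $\langle u\rangle$ is infinite cyclic, which follows since $u$ is a nontrivial unipotent). By property (2), $x_0=(p_v)_{\varepsilon_v}\ni x_0$ is disjoint from $(L_{u_1})_{\delta_{u_1}}$, so $x_0\notin(L_{u_1})_{\delta_{u_1}}$.

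Next I would feed $x_0$ through the word from the right. By property (1), $u_\ell^{k_\ell}(x_0)\in(p_{u_\ell})_{\varepsilon_{u_\ell}}$ since $x_0\notin(L_{u_\ell})_{\delta_{u_\ell}}$ (using $u_\ell\neq u_1$ if $v$ was taken to be $u_\ell$; in general one picks $v\notin\{u_1,u_\ell\}$, and handles the two-letter alphabet case separately). Now induct: suppose $u_{i+1}^{k_{i+1}}\cdots u_\ell^{k_\ell}(x_0)\in(p_{u_{i+1}})_{\varepsilon_{u_{i+1}}}$. Since $u_i\neq u_{i+1}$, property (2) gives $(p_{u_{i+1}})_{\varepsilon_{u_{i+1}}}\cap(L_{u_i})_{\delta_{u_i}}=\emptyset$, so the current point lies outside $(L_{u_i})_{\delta_{u_i}}$, and property (1) (with the exponent $k_i\neq 0$) puts its image inside $(p_{u_i})_{\varepsilon_{u_i}}$. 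Thus $\phi(w)(x_0)\in(p_{u_1})_{\varepsilon_{u_1}}$. Since $x_0\in(p_v)_{\varepsilon_v}$ with $v\neq u_1$, and $(p_{u_1})_{\varepsilon_{u_1}}\cap(p_v)_{\varepsilon_v}=\emptyset$ — this disjointness follows from property (2) applied with the roles of the pair, together with $(p)_{\varepsilon}\subseteq(L)_{\delta}$-type containments, or more directly since $(p_v)_{\varepsilon_v}\subseteq(L_v)_{\delta_v}$ is false in general but $(p_{u_1})_{\varepsilon_{u_1}}$ avoids $(L_v)_{\delta_v}$ which... — we conclude $\phi(w)(x_0)\neq x_0$, so $\phi(w)\neq\id$.

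The main obstacle is the bookkeeping around the base point: one needs the starting point $x_0$ to be "fresh" with respect to both the first and last letters of the reduced word so that the telescoping estimate both gets going and detects nontriviality at the end. The clean way to arrange this, which I would adopt, is to only prove $\phi(w)\neq\id$ for cyclically reduced $w$ with $\ell\geq 2$ and at least two distinct letters appearing, reduce the general case to this by conjugation and by replacing $w$ with a power $w^N$ (whose reduced form exhibits two distinct letters with the right cyclic structure once $\ell\geq 2$), and dispose of $\ell=1$ and the one-letter alphabet by the elementary infinite-cyclicity of $\langle u\rangle$ for a nontrivial unipotent $u$. With that packaging the disjointness inputs needed are exactly properties (1) and (2), and no further geometric input is required.
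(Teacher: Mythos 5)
The paper offers no proof of this lemma; it is stated as ``a version of the well known ping-pong lemma.'' So what follows is an assessment of the attempt on its own terms rather than a comparison.

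Your high-level plan (telescope a reduced word through the attracting balls $(p_u)_{\varepsilon_u}$ using properties (1) and (2)) is the right one, but there are two concrete problems. First, the sentence in which you claim ``$(p_v)_{\varepsilon_v}\subseteq(L_v)_{\delta_v}$ is false in general'' has it backwards. For a rank-$1$ unipotent $v$ one has $(v-\mathrm{I})^2=0$, so $\mathrm{Im}(v-\mathrm{I})\subseteq\ker(v-\mathrm{I})$, i.e.\ $p_v\in L_v$; together with $\varepsilon_v\le\delta_v$ this gives precisely $(p_v)_{\varepsilon_v}\subseteq(L_v)_{\delta_v}$. This containment is not an obstacle but the fact you need: combined with property (2) it yields $(p_{u_1})_{\varepsilon_{u_1}}\cap(p_v)_{\varepsilon_v}=\emptyset$ for $u_1\ne v$, which is what makes the endpoint test $\phi(w)(x_0)\ne x_0$ go through. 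As written, your proof trails off at exactly the point where this is needed.

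Second, and more seriously, your proposed reduction to ``cyclically reduced $w$ of length $\ell\ge 2$, plus conjugation and powers $w^N$'' does not fix the two-letter problem -- it aggravates it. When $|\mathcal S|=2$, a cyclically reduced word of length $\ge 2$ necessarily has \emph{distinct} first and last letters, say $u_1\ne u_\ell$; then every admissible base point $x_0$ lies in $(p_{u_1})_{\varepsilon_{u_1}}$, the chain lands $\phi(w)(x_0)$ back in $(p_{u_1})_{\varepsilon_{u_1}}$, and you get no contradiction. Replacing $w$ by $w^N$ changes nothing, since $w^N$ is still cyclically reduced with the same first and last letters. The standard fix goes in the \emph{opposite} direction: for $\mathcal S=\{a,b\}$ and $w$ a reduced word beginning with $a$ and ending with $b$, conjugate by $a^m$ with $m\ne 0,-k_1$, so that $a^m w a^{-m}$ is reduced and both begins and ends with a power of $a$; then take $x_0\in(p_b)_{\varepsilon_b}$, run the chain to get $a^m w a^{-m}(x_0)\in(p_a)_{\varepsilon_a}$, and conclude from disjointness. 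This uses that $\langle a\rangle$ is infinite (hence has at least three elements), which is exactly the usual side hypothesis of ping-pong for two factors. With this correction, and with the $|\mathcal S|\ge 3$ and one-letter cases handled as you indicate, the argument is complete.
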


An important ingredient for our methods is the following beautiful result:

\begin{thm}[Venkataramana, \cite{Ve87}]\label{Thm - Venkataramana} Let $\Gamma$ be a Zariski-dense subgroup of $\SL(n,\Z)$. Assume that
$u\in \mathcal{U} \cap \Gamma$, $v \in \Gamma$ is unipotent  and $\langle u,v\rangle \simeq \Z^2$. Then $\Gamma$ has finite index in $\SL(n,\Z)$. In particular, if  
$\Gamma$ is profinitely-dense then $\Gamma=\SL(n,\Z)$. 
\end{thm}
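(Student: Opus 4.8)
The plan is to reduce the theorem to the assertion $(\star)$ that $\Gamma$ contains a lattice in the real points of the unipotent radical $U_P$ of some maximal $\Q$-parabolic $P$ of $\SL(n)$ --- equivalently, a finite-index subgroup of $U_P(\Z)$ --- and then to deduce finite index from $(\star)$ by a standard commutator computation in $\SL(n,\Z)$. First some harmless reductions: conjugating the pair $(\SL(n,\Z),\Gamma)$ by an element of $\GL(n,\Q)$ is an isomorphism of pairs and so changes nothing; passing to a finite-index subgroup of $\Gamma$ only weakens what is to be proved; and replacing $u$ by a nonzero power preserves all hypotheses. After such moves we may assume $u=I+E_{12}$, a transvection of $\SL(n)$, which lies in the unipotent radical $U^+$ of the maximal parabolic $\stab_{\SL(n)}([e_1])$. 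A direct computation shows that a \emph{unipotent} element of $\SL(n)$ commutes with $u$ exactly when it fixes the vector $e_1$ and the functional $e_2^*$ (equivalently, it fixes $p_u$ and $L_u$ and acts trivially on the line and on the transverse quotient); in particular $v$ lies in $Z_{\SL(n)}(u)$, but in general $v$ does not lie in $U^+$, nor in any maximal-parabolic unipotent radical containing $u$.

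Granting $(\star)$, with $P$, $U_P$ and $\Lambda^+\le\Gamma\cap U_P$ as there, the conclusion runs as follows. The set of $g\in\SL(n)$ with $gPg^{-1}$ opposite to $P$ is Zariski open and nonempty, so Zariski density of $\Gamma$ furnishes $\gamma\in\Gamma$ with $U^-:=\gamma U_P\gamma^{-1}$ the unipotent radical of the opposite parabolic. Then $\Lambda^-:=\gamma\Lambda^+\gamma^{-1}\subseteq\Gamma$ is a lattice in the real group $U^-(\R)$; being contained in the discrete group $U^-(\Z):=\SL(n,\Z)\cap U^-(\R)$ it has finite index there. Since a finite-index subgroup of a lattice $\Z^d$ contains $M\Z^d$, both $\Lambda^+$ and $\Lambda^-$ contain the level-$M$ points $U_P(M\Z)$, resp.\ $U^-(M\Z)$, for a suitable $M\ne0$. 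Because $P$ is maximal, $U_P$ and $U^-$ generate $\SL(n)$, and a standard computation with the Chevalley commutator relations then shows that $\langle\Lambda^+,\Lambda^-\rangle$ contains the subgroup $E_n(N\Z):=\langle I+cE_{ij}:i\ne j,\ c\in N\Z\rangle$ for some $N\ne0$; for $n\ge3$ this subgroup has finite index in $\SL(n,\Z)$ (Bass--Milnor--Serre, Mennicke). Hence $[\SL(n,\Z):\Gamma]<\infty$. The ``in particular'' is then immediate: a finite-index subgroup of $\SL(n,\Z)$ has closure of the same, finite, index in the profinite completion, so a profinitely dense $\Gamma$ of finite index must have index $1$, i.e.\ $\Gamma=\SL(n,\Z)$.

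It remains to prove $(\star)$, which is the heart of the matter --- essentially the content of \cite{Ve87}. The starting observation is that $\langle u,v\rangle\cong\Z^2$ is a lattice in its Zariski closure $V$, a two-dimensional abelian unipotent subgroup containing the transvection $u$; so $\Gamma$ already carries two independent unipotent directions through $u$, but $V$ need not sit inside any maximal-parabolic unipotent radical (this already happens for $n=3$), and the task is to inflate it to a lattice in some $U_P$. The naive approach --- conjugating $u$ and $v$ by elements of $\Gamma$ lying in $\stab([e_1])$ or in $\stab(L_u)$ --- is unavailable, since $\Gamma$ is only assumed Zariski dense in $\SL(n)$ and may be thin, so that e.g.\ $\Gamma\cap\stab([e_1])$ need not be Zariski dense in $\stab([e_1])$. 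Instead one works with commutators of $\Gamma$-conjugates of the two given elements, which lie in $\Gamma$ unconditionally: as $\gamma$ ranges over a Zariski dense subset of $\Gamma$, the elements $[u,\gamma u\gamma^{-1}]$ and $[u,\gamma v\gamma^{-1}]$ are again unipotent, and because $u$ is a transvection one can control their Jordan type and their position relative to the root structure attached to $\stab([e_1])$; feeding the commuting pair $\langle u,v\rangle$ into this machinery produces enough independent such elements, and an induction on the unipotent subgroup they generate yields the required lattice. It is precisely here that $n\ge3$ --- equivalently $\Q$-rank at least $2$ --- is used: for $n=2$ the hypothesis $\langle u,v\rangle\cong\Z^2$ is in fact vacuous, since every unipotent of $\SL(2,\Z)$ commuting with $u$ is a power of $u$. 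I expect this step to be the only one demanding real work; the reductions and the concluding commutator calculus are routine by comparison.
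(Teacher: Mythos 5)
The statement you were asked to prove is not proved in the paper at all: it is stated with the attribution ``Venkataramana, \cite{Ve87}'' and used as an imported black box throughout. There is therefore no argument in the paper to compare your proposal against.

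That said, your sketch correctly identifies the architecture of the known proof (Venkataramana \cite{Ve87}, building on Raghunathan's earlier work on generators of arithmetic groups): reduce to producing inside $\Gamma$ a lattice in the unipotent radical $U_P(\R)$ of a maximal $\Q$-parabolic; use Zariski density to transport that lattice into an opposite unipotent radical via a $\Gamma$-conjugation; and conclude via elementary-matrix and congruence-subgroup machinery (Mennicke, Bass--Milnor--Serre) that the two together generate a finite-index subgroup of $\SL(n,\Z)$. Your ``in particular'' deduction and your observation that for $n=2$ the hypothesis $\langle u,v\rangle\cong\Z^2$ is vacuous are also correct. The genuine gap is that your $(\star)$ is not a lemma you then establish; it \emph{is} the theorem. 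The few sentences you offer for it --- ``one works with commutators of $\Gamma$-conjugates\ldots feeding the commuting pair into this machinery produces enough independent such elements, and an induction yields the required lattice'' --- name ingredients without supplying an argument. Passing from a single commuting pair $\langle u,v\rangle\cong\Z^2$ in a possibly thin Zariski-dense $\Gamma$ to a full lattice in some $U_P(\Z)$ is precisely where the real work lies (a careful use of the root-space decomposition attached to $u$, a bootstrap showing which one-parameter unipotent subgroups $\Gamma$ already meets cocompactly, and an induction on the unipotent subgroup so generated), and none of that is actually carried out. So the reductions and the concluding commutator calculus are in order, but the core of the theorem is left as a citation rather than a proof, which is an honest but real gap.
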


Note that if $u,v \in \SL(n,\Z)\cap \mathcal{U}$ and $p_u=p_v$ then $(u-1)(v-1)=(v-1)(u-1)=0$ and in particular
$uv=vu$. 
Thus we get the following lemma:
\begin{lemma}\label{lemma - finding a good pair} Let $g \in \SL(n,\Z)$ and $u_1,u_2 \in \mathcal{U}$.
Assume that $p_{u_2}=gp_{u_1}$ and $L_{u_2} \ne gL_{u_1}$. Then $\langle u_1,g^{-1}u_2g \rangle \simeq \Z^2$.
\end{lemma}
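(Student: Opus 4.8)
The plan is to verify the two hypotheses of the remark preceding the lemma, namely that $u_1$ and $g^{-1}u_2g$ are rank-1 unipotents sharing the same point of attraction, and then separately rule out the degenerate possibility that they generate a cyclic (rather than a $\Z^2$) group. First I would record that conjugation by $g\in\SL(n,\Z)$ takes rank-1 unipotents to rank-1 unipotents and acts on the associated projective data by $p_{g^{-1}u_2g}=g^{-1}p_{u_2}$ and $L_{g^{-1}u_2g}=g^{-1}L_{u_2}$; this is immediate from Definition \ref{def - rank 1 unipotent} since $g^{-1}u_2g-\mathrm{I}_n=g^{-1}(u_2-\mathrm{I}_n)g$. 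Hence the hypothesis $p_{u_2}=gp_{u_1}$ gives $p_{g^{-1}u_2g}=g^{-1}p_{u_2}=p_{u_1}$, so the two unipotents $u_1$ and $w:=g^{-1}u_2g$ have a common point of attraction.

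Next I would invoke the observation stated just before the lemma: since $u_1,w\in\SL(n,\Z)\cap\mathcal{U}$ and $p_{u_1}=p_w$, we get $(u_1-1)(w-1)=(w-1)(u_1-1)=0$, and in particular $u_1 w=w u_1$. Thus $\langle u_1,w\rangle$ is abelian, generated by two commuting elements of infinite order (each $u_i$ has infinite order, being a nontrivial unipotent over a field of characteristic $0$), so it is a quotient of $\Z^2$, i.e. either $\Z^2$ or infinite cyclic. It remains to exclude the cyclic case, and this is the step I expect to require the actual hypothesis $L_{u_2}\ne gL_{u_1}$, equivalently $L_w\ne L_{u_1}$.

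To rule out $\langle u_1,w\rangle$ being cyclic, suppose for contradiction that there is a single unipotent $z$ and integers with $u_1=z^a$, $w=z^b$. By the equivalence relation of Lemma \ref{lemma - structure of unipotent}, elements in a common cyclic unipotent group have the same point of attraction \emph{and} the same fixed hyperplane; more directly, $z$ itself need not have rank $1$, but $u_1=z^a$ has rank $1$, and one checks that $\{u_1 x-x\}$ and $\{w x-x\}$ both lie inside $\{z^{N}x - x : N\}$-type subspaces forcing $L_{u_1}=L_z=L_w$ (alternatively: two commuting rank-$1$ unipotents with the same attracting point but different fixed hyperplanes cannot both be powers of one element, since powers of a unipotent all fix the same subspace $\ker(z-1)\supseteq$ the relevant hyperplane data). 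This contradicts $L_w\ne L_{u_1}$. Therefore $\langle u_1, w\rangle\cong\Z^2$, which is the claim. The only delicate point is the last paragraph's bookkeeping about fixed hyperplanes of powers of a not-necessarily-rank-$1$ unipotent $z$; I would handle it by passing to $\ker(u_1-1)=\ker(z^a-1)$ and noting $\ker(z^a-1)=\ker(z-1)$ for unipotent $z$ over $\Q$ (since $z^a-1=(z-1)(z^{a-1}+\cdots+1)$ and the second factor is invertible on the generalized eigenspace structure in characteristic $0$), which pins down $L_{u_1}$ in terms of $z$ alone and likewise for $L_w$.
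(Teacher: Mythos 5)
Your proof is correct and follows the same route the paper intends: the paper states the commutation observation ($(u-1)(v-1)=(v-1)(u-1)=0$ when $p_u=p_v$) and then presents the lemma as an immediate consequence, leaving the exclusion of the cyclic case implicit. You correctly fill in that gap by noting that a putative common root $z$ of $u_1$ and $w$ must itself be a rank-$1$ unipotent (as every element $u_1^mw^k$ satisfies $(u_1^mw^k-1)^2=0$), and then $\ker(u_1-1)=\ker(z-1)=\ker(w-1)$ would force $L_{u_1}=L_w$, contradicting the hypothesis. A small streamlining: once you know $z$ is rank-$1$, you have $(z-1)^2=0$, so $z^a-1=a(z-1)$ directly, giving $\ker(z^a-1)=\ker(z-1)$ for all $a\neq 0$ without invoking the factorization $z^a-1=(z-1)(z^{a-1}+\cdots+1)$ (which as written only covers $a>0$).
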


\begin{lemma}\label{throwing} Assume that $g$ is an element of $\SL(n,\Z)$, $(\mathcal{S},\mathcal{A},\mathcal{R})$ is a profinitely-dense Schottky system, $\delta \ge \e>0$, 
$p_1$ and $p_2$ are rational  points and $L_1$ and $L_2$ are rational  $(n-2)$-dimensional subspaces such that the following conditions hold:
\begin{enumerate}
\item $([p_1]_\e  \cup [p_2]_\e)\cap \mathcal{R}=\emptyset$ and $([L_1]_\delta \cup [L_2]_\delta)\cap \mathcal{A}=\emptyset$;
\item $[p_1]_\e \cap [L_2]_\delta=\emptyset$ and $[p_2]_\e \cap [L_1]_\delta=\emptyset$;
\item $p_1=gp_2$ and $L_1 \ne gL_2$.
\end{enumerate}
Denote $\mathcal{A}_+=\mathcal{A} \cup [p_1]_\e \cup [p_2]_\e$ and $\mathcal{R}_+=\mathcal{R} \cup [L_1]_\delta \cup [L_2]_\delta$.
Then there exists a set $\mathcal{S}_+ \supseteq \mathcal{S}$ such that $(\mathcal{S}_+,\mathcal{A}_+,\mathcal{R}_+)$ is  a Schottky system  which contains $(\mathcal{S},\mathcal{A},\mathcal{R})$
and $\langle \mathcal{S}_+,g \rangle=\SL(n,\Z)$. 
\end{lemma}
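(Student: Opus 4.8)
The plan is to build $\mathcal{S}_+$ by adding to $\mathcal{S}$ two carefully chosen unipotent elements attached to the data $(p_1,L_1)$ and $(p_2,L_2)$, and then to invoke Venkataramana's theorem (Theorem \ref{Thm - Venkataramana}) to upgrade profinite density of the resulting group to equality with $\SL(n,\Z)$. First, I would apply Lemma \ref{adding} twice: since $[p_1]_\e\cap\mathcal{A}=\emptyset$ (which follows from condition (1), as $[L_1]_\delta\cap\mathcal{A}=\emptyset$ — wait, one must check $[p_i]_\e\cap\mathcal{A}=\emptyset$; in fact condition (1) gives $[p_i]_\e\cap\mathcal{R}=\emptyset$ and $[L_i]_\delta\cap\mathcal{A}=\emptyset$, while the required disjointness $[p_i]_\e\cap\mathcal{A}=\emptyset$ comes because $\mathcal{A}\subseteq\mathcal{R}$) and $[L_1]_\delta\cap\mathcal{R}=\emptyset$, Lemma \ref{adding} produces $v_1\in\mathcal{U}$ with $p_{v_1}=p_1$, $L_{v_1}=L_1$ making $(\mathcal{S}\cup\{v_1\},\mathcal{A}\cup[p_1]_\e,\mathcal{R}\cup[L_1]_\delta)$ a Schottky system containing $(\mathcal{S},\mathcal{A},\mathcal{R})$. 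Condition (2), namely $[p_2]_\e\cap[L_1]_\delta=\emptyset$ together with $[p_1]_\e\cap[L_2]_\delta=\emptyset$, guarantees that one may then apply Lemma \ref{adding} a second time with the point $p_2$ and hyperplane $L_2$ against the enlarged attracting and repelling sets, yielding $v_2\in\mathcal{U}$ with $p_{v_2}=p_2$, $L_{v_2}=L_2$ and a Schottky system $(\mathcal{S}_+,\mathcal{A}_+,\mathcal{R}_+)$ with $\mathcal{S}_+=\mathcal{S}\cup\{v_1,v_2\}$; note that Lemma \ref{adding} is free to replace $v_1,v_2$ by powers of themselves, which will be used below.

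Next I would show $\langle\mathcal{S}_+,g\rangle=\SL(n,\Z)$. Set $\Gamma=\langle\mathcal{S}_+,g\rangle$. Since $\mathcal{S}_+\supseteq\mathcal{S}$ and $\langle\mathcal{S}\rangle$ is already profinitely dense, $\Gamma$ is profinitely dense, hence Zariski dense, in $\SL(n,\Z)$. Now apply condition (3): $p_1=gp_2$ and $L_1\ne gL_2$, i.e. $p_{v_1}=gp_{v_2}$ and $L_{v_1}\ne gL_{v_2}$. By Lemma \ref{lemma - finding a good pair} (with $u_1=v_1$, $u_2=v_2$, and the group element $g^{-1}$ — one has $p_{v_1}=gp_{v_2}$, equivalently $p_{v_2}=g^{-1}p_{v_1}$, and $L_{v_2}\ne g^{-1}L_{v_1}$), we get $\langle v_1, g v_2 g^{-1}\rangle\simeq\Z^2$. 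Here $v_1\in\mathcal{U}\cap\Gamma$ is rank-$1$ unipotent, $gv_2g^{-1}\in\Gamma$ is unipotent, and they generate a $\Z^2$; so Theorem \ref{Thm - Venkataramana} applies to $\Gamma$ and shows $\Gamma$ has finite index in $\SL(n,\Z)$. Being also profinitely dense, $\Gamma=\SL(n,\Z)$, as Venkataramana's theorem records explicitly.

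Finally I would double-check the Schottky axioms for the two applications of Lemma \ref{adding}, since that is where the arithmetic of the disjointness hypotheses is consumed: axiom (2) of a Schottky system for the new pair $\{v_1,v_2\}$ requires $(p_{v_1})_{\e_{v_1}}\cap(L_{v_2})_{\delta_{v_2}}=\emptyset$ and $(p_{v_2})_{\e_{v_2}}\cap(L_{v_1})_{\delta_{v_1}}=\emptyset$, which are exactly delivered by condition (2) once we take $\e_{v_i}\le\e$ and $\delta_{v_i}\le\delta$ (shrinking the Schottky radii is harmless and built into the proof of Lemma \ref{adding}), and the interactions between $v_i$ and the old elements of $\mathcal{S}$ are handled by condition (1) exactly as in Lemma \ref{adding}. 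The only genuinely substantive input is Venkataramana's theorem; everything else is bookkeeping with the ping-pong geometry. The main obstacle, then, is purely one of careful verification — ensuring the two enlargement steps are compatible (the second must avoid not just $\mathcal{A},\mathcal{R}$ but the freshly added $[p_1]_\e,[L_1]_\delta$, which is precisely why condition (2) is stated symmetrically) — rather than any deep difficulty.
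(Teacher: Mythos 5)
Your argument is correct in substance and follows essentially the same route as the paper: pick rank-$1$ unipotents attached to $(p_1,L_1)$ and $(p_2,L_2)$, observe via Lemma~\ref{lemma - finding a good pair} and condition~(3) that $\langle\mathcal{S}_+,g\rangle$ contains a rank-$1$ unipotent and a commuting unipotent generating a copy of $\Z^2$, pass to high enough powers so that the Schottky axioms are preserved, and finish with Theorem~\ref{Thm - Venkataramana} together with profinite density. The paper chooses $u_1,u_2$ first, extracts the $\Z^2$, and then takes powers $v_i=u_i^m$ in one application of Lemma~\ref{lemma - dynamic of unipotent}; your variant of applying Lemma~\ref{adding} twice is a repackaging of the same step, and you correctly identified that condition~(2) is what makes the two enlargements mutually compatible.

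One detail to fix: in invoking Lemma~\ref{adding} you assert $[L_1]_\delta\cap\mathcal{R}=\emptyset$, but condition~(1) of Lemma~\ref{throwing} only gives $[L_1]_\delta\cap\mathcal{A}=\emptyset$, which is strictly weaker since $\mathcal{A}\subseteq\mathcal{R}$; that particular disjointness is not available from the hypotheses. The apparent obstruction is an artifact of the printed hypotheses of Lemma~\ref{adding}, in which the roles of $\mathcal{A}$ and $\mathcal{R}$ are interchanged: what Schottky axiom~(2) actually demands of the enlarged system is that the new attracting ball avoid every old repelling neighbourhood and the new repelling neighbourhood avoid every old attracting ball, i.e.\ $[p]_\e\cap\mathcal{R}=\emptyset$ (because $\bigcup_{u\in\mathcal{S}}(L_u)_{\delta_u}\subseteq\mathcal{R}$) and $[L]_\delta\cap\mathcal{A}=\emptyset$ (because $\bigcup_{u\in\mathcal{S}}(p_u)_{\e_u}\subseteq\mathcal{A}$); this is exactly the form in which condition~(1) of Lemma~\ref{throwing} is stated, and also the form used elsewhere in the paper when Lemma~\ref{adding} is later applied. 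So once Lemma~\ref{adding} is read with its intended hypotheses your plan goes through unchanged, but a careful write-up should note and correct the mismatch rather than silently assert a disjointness that the hypotheses do not in fact provide.
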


\begin{proof} For every $1 \le i \le 2$ choose $u_i\in \mathcal{U}$ such that
$p_{u_i}=p_i$ and $L_{u_i}=L_i$.
  Lemma \ref{lemma - finding a good pair} implies that $\langle u_1,g^{-1}u_2g \rangle \simeq \Z^2$.
Lemma \ref{lemma - dynamic of unipotent}  implies that there exists $m \ge 1$ such that $(\mathcal{S}_+,\mathcal{A}_+,\mathcal{R}_+)$ is  Schottky system
where $v_1:=u_1^m$, $v_2:=u_2^m$ and $\mathcal{S}_+:=\mathcal{S} \cup \{v_1,v_2\}$. Theorem \ref{Thm - Venkataramana} implies  
that $\langle \mathcal{S}_+,g \rangle=\SL(n,\Z)$. 
\end{proof}

\begin{defin} Let $1 \le k \le n$. A $k$-tuple $(p_1,\ldots,p_k) $ of projective points is called generic if  $p_1,\ldots,p_k$ span a $(k-1)$-dimensional
subspace of $\P$. Note that the set of generic $k$-tuples of $\P$ is an open subset of the product of $k$ copies of the projective space, indeed it is even Zariski open.
\end{defin}

\begin{thm}[Conze-Guivarc'h, \cite{CG00}]\label{them - conze} Assume that $n \ge 3$ and that $\Gamma \le \SL(n,\R)$ is a lattice. Then $\Gamma$ acts minimally
of the set of generic $(n-1)$-tuples.  
\end{thm}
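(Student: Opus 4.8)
The plan is to deduce the statement from one input about lattices and then to place the $n-1$ points one at a time using high powers of well-chosen proximal elements.

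By the Borel density theorem $\Gamma$ is Zariski dense in $\SL(n,\R)$, hence contains \emph{proximal} elements: an element $\gamma$ with a single eigenvalue of maximal modulus has an attracting point $p_\gamma^+\in\P$ and a $\gamma$-invariant repelling hyperplane $H_\gamma\in\mathbb{L}_{n-2}$ with $p_\gamma^+\notin H_\gamma$, and $\gamma^m x\to p_\gamma^+$ uniformly on compacta of $\P\setminus H_\gamma$. The place where the lattice hypothesis is really used --- Zariski density alone does not suffice, since Schottky subgroups are Zariski dense but have Cantor limit sets --- is the classical fact that the attracting data of such elements is as spread out as possible:
\begin{quote}
$(\ast)$\quad the set $\{(p_\gamma^+,H_\gamma):\gamma\in\Gamma\ \text{proximal}\}$ is dense in $\{(p,H)\in\P\times\mathbb{L}_{n-2}:p\notin H\}$,
\end{quote}
which follows from the density of periodic orbits of the Weyl chamber flow on $\Gamma\backslash\SL(n,\R)$ (closing lemma together with Howe--Moore mixing), or equivalently from the fact that the limit set of a lattice on any flag variety is the whole variety. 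From $(\ast)$ the base case $k=1$ is immediate: if $\Lambda\subseteq\P$ is a minimal nonempty closed $\Gamma$-invariant set and $p_0\in\Lambda$, then for every proximal $\gamma$ with $p_0\notin H_\gamma$ we get $p_\gamma^+=\lim_m\gamma^m p_0\in\Lambda$, and by $(\ast)$ such attracting points are dense, so $\Lambda=\P$; the same argument applied to exterior-power representations gives minimality of $\Gamma$ on each Grassmannian $\mathbb{L}_k$.

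I would then prove by induction on $k\le n-1$ that $\Gamma$ acts minimally on generic $k$-tuples. Given a generic target $(q_1,\dots,q_k)$, a generic source, and $\e>0$: by the inductive hypothesis first move the first $k-1$ source points to within $\e'$ of $q_1,\dots,q_{k-1}$, and assume this already holds, the $k$-th point $p_k$ being arbitrary (subject to genericity). Since $k\le n-1$, the points $q_1,\dots,q_{k-1}$ span a proper subspace, so there is a hyperplane $H$ containing them but avoiding $q_k$ and $p_k$; by $(\ast)$ choose a proximal $\gamma\in\Gamma$ with $(p_\gamma^+,H_\gamma)$ very close to $(q_k,H)$. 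For a suitable power $m$, $\gamma^m$ carries $p_k$ (which is at definite distance from $H_\gamma$) into $(q_k)_\e$, whereas the earlier points, lying very close to $H_\gamma$, are displaced by $\gamma^m$ by only a controlled amount; hence $\gamma^m$, applied after the element furnished by the inductive hypothesis, brings $(p_1,\dots,p_k)$ to within $\e$ of $(q_1,\dots,q_k)$. The case $k=n-1$ is the theorem.

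The delicate point --- the technical heart of \cite{CG00} --- is controlling the displacement of $q_1,\dots,q_{k-1}$ in the last step: these points are only $\e'$-close to, not contained in, the repelling hyperplane $H_\gamma$, so under iteration they slowly drift towards $p_\gamma^+$ and, worse, the restriction $\gamma|_{H_\gamma}$ moves points \emph{within} that hyperplane; thus $m$ must be large enough to contract $p_k$ yet small enough, and $\gamma$ chosen with attracting data and Jordan projection tuned sharply enough --- the latter exploiting that a lattice has limit cone equal to the full Weyl chamber --- to keep the earlier points essentially fixed, and these errors must be propagated through the induction. An alternative route avoids proximal dynamics altogether: the space of generic $(n-1)$-tuples is $\SL(n,\R)/AU$ with $A$ the diagonal torus and $U$ the unipotent radical of a maximal parabolic, so by orbit duality the statement is equivalent to minimality of the right $AU$-action on $\Gamma\backslash\SL(n,\R)$, which one can attack via Ratner's orbit-closure theorem for the $U$-orbits together with the mixing of the torus $A$, the non-uniform case (closed $U$-orbits near cusps) still requiring care.
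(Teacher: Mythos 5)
The paper does not prove Theorem~\ref{them - conze}; it is imported verbatim from Conze--Guivarc'h \cite{CG00}, so there is no in-paper argument to measure your proposal against. I will therefore assess the sketch on its own terms.

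Your preparatory material is sound: by Borel density $\Gamma$ is Zariski dense, and the density statement $(\ast)$ for the attracting/repelling data of proximal elements in a lattice is a correct and standard fact (one may also cite Benoist's result that, for any Zariski-dense $\Gamma$, such pairs are dense in $\Lambda_\Gamma\times\Lambda_\Gamma$ off the non-transverse locus, together with $\Lambda_\Gamma$ being the full flag variety when $\Gamma$ is a lattice). The base case on $\P$ and on each Grassmannian then goes through. The genuine gap is exactly where you flag it, and I do not think you can wave it away as a ``technical heart'' to be tuned later. After the inductive step the first $k-1$ points are only $\e'$-close to $q_1,\dots,q_{k-1}$, hence only $\e'$-close to $H_\gamma$, not on it; under $\gamma^m$ they drift toward $p_\gamma^+$ at the \emph{same} exponential rate $(|\lambda_2|/\lambda_1)^m$ that pulls $p_k$ in, only with a different constant, and on top of that the action near $H_\gamma$ shears them around according to $\gamma|_{H_\gamma}$, which is generally expansive. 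Worse, you have no lower bound on $\dist(gp_k,H_\gamma)$ --- after the inductive move, $gp_k$ may land arbitrarily close to the span of the other images --- so the ``definite distance'' in your sentence is not justified. Prescribing $(p_\gamma^+,H_\gamma)$ via $(\ast)$ gives no control over the intermediate eigendata of $\gamma$, which is what you would actually need to keep the earlier points put; that is a substantially harder requirement than $(\ast)$ and is not supplied by the limit-cone fact you invoke. The closing remark about reducing to minimality of the $AU$-action on $\Gamma\backslash G$ is a plausible reformulation, but Ratner controls closures of unipotent orbits, not of $AU$-orbits, and gluing in the $A$-direction is a real issue (the non-uniform case especially); as written it is a pointer, not a proof. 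So: the scaffolding is reasonable, the key step is missing, and you have correctly identified which step it is.
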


\begin{corol}\label{cor - conze} Assume that  $n \ge 3$ and $\Gamma \le \SL(n,\R)$  is a lattice. For  every $1 \le i \le 2$ let $p_i \in L_i \in \mathbb{L}_{n-2}$. Then for every positive numbers $\varepsilon$
and $\delta$ there exists $g \in \Gamma$ such that $gp_1 \in (p_2)_{\varepsilon}$ and
$gL_1 \in (L_2)_{\delta}$.
\end{corol}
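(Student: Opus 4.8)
The plan is to translate the statement about the pairs $(p_i,L_i)$ into one about generic $(n-1)$-tuples of projective points and then invoke Theorem~\ref{them - conze}.

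First I would encode each pair. Since $L_i\in\mathbb{L}_{n-2}$ corresponds to an $(n-1)$-dimensional linear subspace of $\R^n$ and $p_i\in L_i$, one can pick projective points $q^{(i)}_2,\ldots,q^{(i)}_{n-1}\in L_i$ so that the tuple $P_i:=(p_i,q^{(i)}_2,\ldots,q^{(i)}_{n-1})$ spans $L_i$. Such a spanning tuple is automatically generic, and its span, viewed as an element of $\mathbb{L}_{n-2}\cong\mathrm{Gr}(n-1,\R^n)$, is exactly $L_i$. Thus $P_1$ and $P_2$ both lie in the open set $\Omega$ of generic $(n-1)$-tuples inside the $(n-1)$-fold product of $\P$.

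Next I would use the continuity of the span map $\sigma\colon\Omega\to\mathbb{L}_{n-2}$, $(x_1,\ldots,x_{n-1})\mapsto\Span(x_1,\ldots,x_{n-1})$ (clear from the Plücker/wedge description of the Grassmannian; in any case $\Omega$ is precisely the locus on which $\sigma$ is defined and continuous), together with the continuity of the projection onto the first coordinate. Given $\varepsilon,\delta>0$, and using $\sigma(P_2)=L_2$, these continuities produce an open neighborhood $V\subseteq\Omega$ of $P_2$ such that every $(y_1,\ldots,y_{n-1})\in V$ satisfies $y_1\in(p_2)_\varepsilon$ and $\sigma(y_1,\ldots,y_{n-1})\in(L_2)_\delta$. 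Finally, Theorem~\ref{them - conze} asserts that $\Gamma$ acts minimally on $\Omega$, so the orbit $\Gamma\cdot P_1$ is dense in $\Omega$ and therefore meets the nonempty open set $V$; choosing $g\in\Gamma$ with $gP_1\in V$ gives $gp_1\in(p_2)_\varepsilon$, and since $g$ acts on $\P$ by a projective linear transformation it carries spans to spans, whence $gL_1=g\,\sigma(P_1)=\sigma(gP_1)\in(L_2)_\delta$.

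The only step requiring any care is the encoding: verifying that the tuples $P_i$ can be chosen generic. But this is a routine dimension count once one uses $p_i\in L_i$, so there is no genuine obstacle here — the entire substance of the corollary is already packed into Theorem~\ref{them - conze}.
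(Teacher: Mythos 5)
Your proof is correct, and since the paper states Corollary~\ref{cor - conze} without proof (leaving it as an immediate consequence of Theorem~\ref{them - conze}), your argument is the natural and presumably intended one. Encoding $(p_i,L_i)$ as a generic $(n-1)$-tuple $P_i$ with first coordinate $p_i$ and span $L_i$, using continuity of the first-coordinate projection and of the span map $\sigma\colon\Omega\to\mathbb{L}_{n-2}$ to build a basic open neighborhood of $P_2$, and then invoking minimality to find $g$ with $gP_1$ in that neighborhood is exactly the right chain of reasoning, and the $\Gamma$-equivariance of $\sigma$ closes it cleanly.
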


The proof of the following Proposition is based on the proof of the main result of \cite{AGS14}. 

\begin{prop}\label{prop -pro-dense} Assume that $n \ge 3$ and $p \in L \in \mathbb{L}_{n-2}$.
Then for every $\delta \ge \e >0$ there exists a finite subset $\mathcal{S}\subseteq \mathcal{U}$ such that  $(\mathcal{S},\mathcal{A},\mathcal{R})$ is a
profinitely-dense Schottky system  where $\mathcal{A}:=[p]_\e$ and  $\mathcal{R}:=[L]_\delta$.
\end{prop}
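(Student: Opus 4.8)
We follow the proof of the main result of \cite{AGS14}; write $\mathcal{A}:=[p]_\varepsilon$ and $\mathcal{R}:=[L]_\delta$. First a reduction. Suppose $p_1,\dots,p_k$ are rational points lying so close to $p$, and $L_1,\dots,L_k$ rational $(n-2)$-dimensional subspaces with $p_i\in L_i$ lying so close to $L$, that one can fix numbers $\delta_i\ge\varepsilon_i>0$ with $[p_i]_{\varepsilon_i}\subseteq\mathcal{A}$, $[L_i]_{\delta_i}\subseteq\mathcal{R}$, and $[p_i]_{\varepsilon_i}\cap[L_j]_{\delta_j}=\emptyset$ for $i\neq j$; this last condition forces $p_i\neq p_j$ and $p_i\notin L_j$ for $i\neq j$, which a generic such localised choice satisfies. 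Choose $u_i\in\U$ with $p_{u_i}=p_i$, $L_{u_i}=L_i$ (Lemma \ref{lemma - structure of unipotent}). By Lemma \ref{lemma - dynamic of unipotent} there are constants $c_i$ such that for \emph{any} $m_i\ge c_i$ the elements $v_i:=u_i^{m_i}$ (for which $p_{v_i}=p_i$, $L_{v_i}=L_i$) satisfy condition (1) of a Schottky system with the chosen $\varepsilon_i,\delta_i$, while conditions (2)--(4) hold by the above choices. Hence $(\{v_1,\dots,v_k\},\mathcal{A},\mathcal{R})$ is a Schottky system for \emph{every} choice of $m_i\ge c_i$, and it remains only to choose the pairs $(p_i,L_i)$ and the exponents $m_i\ge c_i$ so that $\langle v_1,\dots,v_k\rangle$ is profinitely dense.

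For the profinite density we use the following. Since $n\ge 3$, $\SL(n,\Z)$ has the congruence subgroup property, so by strong approximation together with the Frattini argument (the congruence layers carry the module $\mathfrak{sl}_n(\F_\ell)$, which has no trivial quotient when $\ell\nmid n$) a Zariski-dense subgroup $\Gamma\le\SL(n,\Z)$ is profinitely dense as soon as its reduction modulo $\ell$ is onto $\SL(n,\F_\ell)$ for every prime $\ell$; moreover this is automatic for all $\ell$ outside a finite set depending on $\Gamma$, and for a finite ``bad'' set (which we arrange to contain every prime dividing $n$) it even suffices that the reduction modulo a suitable power $\ell^{e_\ell}$ be onto $\SL(n,\Z/\ell^{e_\ell})$.

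Accordingly we split the $u_i$ into two batches, both localised near $(p,L)$. For a ``Zariski batch'' $u_1,\dots,u_a$ we use that generating a Zariski-dense subgroup of $\SL(n,\R)$ is a generic condition on a tuple of rank-$1$ unipotents --- the non-dense tuples lie in a countable union of proper subvarieties, which cannot contain all rational points of a nonempty real-open set of such tuples --- so a suitable rational tuple localised near $(p,L)$ and in general position gives $\Gamma_0:=\langle u_1,\dots,u_a\rangle$ Zariski dense; and since replacing unipotents by nonzero powers does not change the Zariski closure of the group they generate, this will survive the exponent-raising below. Let $S$ be the finite ``bad'' set for $\Gamma_0$, enlarged by the primes dividing $n$ and one further small prime $\ell_0$. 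For a ``congruence batch'' $u_{a+1},\dots,u_k$ we use weak approximation for the (rational) variety of rank-$1$ unipotents --- the real localisation near $(p,L)$ and finitely many $\ell$-adic conditions are independent of one another --- to choose these $u_i$ localised near $(p,L)$, in general position with the previous ones, and with reductions modulo $\ell^{e_\ell}$ for each $\ell\in S$ prescribed so that they generate $\SL(n,\Z/\ell^{e_\ell})$; this is possible because these finite groups are generated by transvections.

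The remaining point --- and the main obstacle --- is to choose the exponents without spoiling any of this. The difficulty is that the high powers forced by Lemma \ref{lemma - dynamic of unipotent} can destroy profinite density, since $\ell\mid m_i$ makes $v_i\equiv 1\pmod\ell$. We take the exponents of the Zariski batch to be powers of $\ell_0$ (at least $\max_{i\le a}c_i$) and those of the congruence batch to be large and coprime to every prime of $S$. Then for $\ell\notin S$ no $m_i$ with $i\le a$ is divisible by $\ell$, so $\langle\bar v_i\rangle=\langle\bar u_i\rangle$ in $\SL(n,\F_\ell)$ and the reduction of $\langle\mathcal{S}\rangle$ modulo $\ell$ contains that of $\Gamma_0$, hence is everything; and for $\ell\in S$ no $m_i$ with $i>a$ is divisible by $\ell$, so the congruence batch already surjects onto $\SL(n,\Z/\ell^{e_\ell})$. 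Together with the Zariski density of $\langle\mathcal{S}\rangle$ (which contains powers of the generators of $\Gamma_0$), the criterion above makes $\langle\mathcal{S}\rangle$ profinitely dense, so by the reduction $(\mathcal{S},\mathcal{A},\mathcal{R})$ is the desired profinitely-dense Schottky system. The heart of the argument is precisely this balancing act: keeping every $p_{v_i}$ inside $[p]_\varepsilon$ and every $L_{v_i}$ inside $[L]_\delta$, keeping the system Schottky --- hence free, which forces the passage to high powers --- and yet retaining surjectivity onto \emph{every} congruence quotient of $\SL(n,\Z)$.
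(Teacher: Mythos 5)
Your overall strategy is the same as the paper's: split the generating set into a ``Zariski batch'' and a ``congruence batch,'' control reductions modulo a finite bad set via a Frattini argument, and use strong approximation for the remaining primes. But there is a genuine gap in the Zariski-density step, and the rest is done less cleanly than in the paper.

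The gap: you justify the existence of the Zariski batch by claiming that the non-dense tuples lie in a countable union of proper subvarieties ``which cannot contain all rational points of a nonempty real-open set.'' That last implication is simply false: the rational points of any nonempty open set in $\R^N$ form a countable set and hence \emph{are} a countable union of proper subvarieties (each a single point). To make this kind of genericity argument work one needs to know the non-dense locus is a \emph{finite} union of proper subvarieties (or a single one), which in turn requires a classification of the proper algebraic subgroups of $\SL_n$ that can contain a tuple of rank-$1$ unipotents, or a citation to an appropriate ``escape from subvarieties'' result. You give neither. The paper sidesteps this entirely: it conjugates the elementary transvections $e_{i,j}$ by elements $g_i\in K_3=\ker\pi_3$ (using Conze--Guivarc'h, Corollary~\ref{cor - conze}, to place the attracting/repelling data where needed) and raises to powers $tk_i+1$ with $t=\exp\SL(n,\Z/3\Z)$, so that $\pi_3$ of each generator is exactly $\pi_3(e_i)$; then $\pi_3(H_1)=\SL(n,\F_3)$, and Zariski density follows from Lubotzky--Weigel (fact~(a)). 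This is both concrete and gap-free.

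You also flag the exponent problem as ``the main obstacle,'' but the paper's trick dissolves it: by choosing the powers to be $\equiv 1$ modulo the exponent of the relevant finite quotient ($t$ for the Zariski batch, $r=\exp\SL(n,\Z/q^2\Z)$ for the congruence batch) and conjugating inside the corresponding congruence kernel ($K_3$, $K_{q^2}$), the generators already reduce to prescribed transvections with no post hoc coprimality bookkeeping. Your alternative (exponents a power of $\ell_0$ for one batch, coprime to $S$ for the other) is internally consistent, but it has to be paired with your weak-approximation construction of the congruence batch, and there you are rather vague: you need rank-$1$ unipotents \emph{in $\SL(n,\Z)$}, localised near $(p,L)$ in the real topology, with specified reductions modulo $\prod_{\ell\in S}\ell^{e_\ell}$, and it is not explained why the integral rank-$1$ unipotent attached to a rational pair $(p',L')$ (Lemma~\ref{lemma - structure of unipotent}) has the reduction you want. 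Conjugating a fixed $e_i$ by $g_i\in K_{q^2}$, as the paper does, gives this for free. In short: the high-level decomposition matches the paper, but your Zariski step has a real hole and your congruence step is under-justified; both are handled cleanly in the paper via conjugated elementary matrices and exponents of the form $tk_i+1$.
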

\begin{proof} We recall some facts about Zariski-dense and profinitely-dense subgroups.  For a positive integer $d \ge 2$ let 
$\pi_d:\SL(n,\Z)\rightarrow \SL(n,\Z/d\Z)$ be the modulo-d homomorphism and denote 
$K_d:=\ker \pi_d$. 

\begin{itemize}
\item[(a)] If $H \le \SL(n,\Z)$ and $\pi_p(H)=\SL(n,\Z/p\Z)$ for some odd prime $p$ then 
$H$ is Zariski-dense, \cite{We96} and \cite{Lu99}.  
\item[(b)] The strong approximation theorem  of Weisfeiler \cite{We84} and Nori \cite{No87} implies that if a subgroup $H$ of  $\SL(n,\Z)$ is Zariski-dense then there exists some positive integer $q$ such that $\pi_d(H)=\SL(n,\Z/d\Z)$ whenever $\gcd(q,d)=1$. 
\item[(c)] If $H \le \SL(n,\Z)$, $\pi_4(H)=\SL(n,\Z/4\Z)$ and $\pi_p(H)=\SL(n,\Z/p\Z)$ for all
 odd primes $p$ then $H$ is profintiely-dense in $\SL(n,\Z)$. 
 \end{itemize}

Fix $\delta \ge \e>0$ and set $\mathcal{A}:=[p]_\e$ and $\mathcal{R}:=[L]_\delta$. For every $1 \le i \le 2n^2-n$, fix a point $p_i$ beloning to an $(n-2)$-dimensional
subspace $L_i$ and positive numbers  $\delta_i \ge \e_i>0$ such that the following two conditions hold:
\begin{enumerate}
\setcounter{enumi}{\theenumTemp}
\item $\cup_{1 \le i \le 2n^2-n}(p_i)_{\varepsilon_i}\subseteq \mathcal{A}$
 and  $\cup_{1 \le i \le 2n^2-n}(L_i)_{\delta_i}\subseteq \mathcal{R}$;
 \item For every $1 \le i \ne j \le 2n^2-n$, $(p_i)_{\varepsilon_i} \cap (L_j)_{\delta_j}=\emptyset$.
\setcounter{enumTemp}{\theenumi}
\end{enumerate}  
For every $1 \le i \ne j \le n$, let $e_{i,j}\in \SL(n,\Z)$ be the matrix with 1 on the diagonal and on the $(i,j)$-entry and zero elsewhere and
let $e_1,\ldots,e_{n^2-n}$ be an enumeration of the $e_{i,j}$'s. Denote the exponent of $\SL(n,\Z/3\Z)$ by $t$.
If $g_1,\ldots,g_{n^2-n}\in K_3$ and  $k_1,\ldots,k_{n^2-n}$ are positive integers then
$\pi_3(H_1)=\SL(n,\Z/3\Z)$ where $u_i:=g_ie_i^{tk_i+1}g_i^{-1}$ and $H_1:=\langle u_i \mid 1 \le i \le n^2-n \rangle$.  Note that for 
every $u \in \mathcal{U}$ and $g \in \SL(n,\Z)$, $p_{gug^{-1}}=gp_u$ and $L_{gug^{-1}}=gL_u$. Thus, Lemma \ref{lemma - dynamic of unipotent} and Corollary \ref{cor - conze} imply that it is possible to choose $g_i$'s and $k_i$'s
such that:
\begin{enumerate}
\setcounter{enumi}{\theenumTemp}
\item  $u_i^k(x) \in (p_i)_{\varepsilon_i}$ for every $1 \le i \le n^2-n$, every $x \not \in (L_i)_{\delta_i}$ and every $k \ne 0$.
\setcounter{enumTemp}{\theenumi}
\end{enumerate}
In particular, $\{ u_1,\ldots, u_{n^2-n}\}$ is a Schottky set with respect to $\mathcal{A}$ and $\mathcal{R}$ which generates a Zariski-dense subgroup $H_1$. 

The strong approximation theorem implies that  there exists some positive integer $q$ such that $\pi_d(H_1)=\SL(n,\Z/d\Z)$ whenever $\gcd(q,d)=1$.
Denote the exponent of $\SL(n,\Z/q^2\Z)$ by $r$.
 As before, there exist $g_{n^2-n+1},\ldots,g_{2n^2-2n}\in K_{q^2}$ and positive integers $k_{n^2-n+1},\ldots,k_{2n^2-2n}$ such that the elements of the form $u_i:=g_ie_i^{rk_i+1}g_i^{-1}$ satisfy:
\begin{enumerate}
\setcounter{enumi}{\theenumTemp}
\item $\pi_{q^2}(H_2)=\SL(n,\Z/q^2\Z)$ where   $H_2:=\langle u_i \mid n^2-n+1 \le i \le 2n^2-2n \rangle$;
\item   $u_i^k(x) \in (p_i)_{\varepsilon_i}$ for every $n^2-n+1 \le i \le 2n^2-2n$, every $x \not \in (L_i)_{\delta_i}$ and every $k \ne 0$. 
\end{enumerate}

Denote $\mathcal{S}:=\{ u_1,\ldots, u_{2n^2-2n}\}$.  Item (c) implies that $\pi_d(\langle \mathcal{S} \rangle)=\SL(n,\Z/d\Z)$ for every $d \ge 1$. 
Thus, $(\mathcal{S},\mathcal{A},\mathcal{R})$ is the required profinitely-dense Schottky system.
\end{proof}

The following lemma will be needed in Section \ref{proof 3}.
\begin{lemma}\label{starting} Assume that $k$ is an element of $\SL(n,\Z)$, $\delta \ge \e>0$,
$p_1$, $p_2$  and $p_3$ are rational  points and $L_1$, $L_2$ and $L_3$ are rational  $(n-2)$-dimensional subspaces such that the following conditions hold
\begin{itemize}
\item For every $1 \le i \ne j \le 3$, $p_i \in L_i$ and $[p_i]_\e \cap [L_j]_\delta=\emptyset$;
\item $p_2=kp_1$, $p_3:=k^2p_1$, $L_2=kL_1$, $L_3 \ne k^2 L_1$;
\end{itemize}
Denote $\mathcal{A}:=\cup_{1 \le i \le 3}[p_i]_{\e}$ and $\mathcal{R}:=\cup_{1 \le i \le 3}[L_i]_{\delta}$. Then 
there exists a profinitely-dense
Schottky system $(\mathcal{S},\mathcal{A},\mathcal{R})$ such that $\langle \mathcal{S} \rangle \cap k\langle \mathcal{S} \rangle k^{-1} \ne \{\id\}$
and $\langle \mathcal{S},k \rangle=\SL(n,\Z)$.
\end{lemma}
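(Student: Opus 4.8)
The plan is to bootstrap from Proposition \ref{prop -pro-dense} and Lemma \ref{throwing}, but with an extra twist that forces a non-trivial element in the intersection $\langle\mathcal S\rangle\cap k\langle\mathcal S\rangle k^{-1}$. First I would apply Proposition \ref{prop -pro-dense} to the point $p_1\in L_1$ and small parameters $\e'\le\e$, $\delta'\le\delta$ to obtain a profinitely-dense Schottky system $(\mathcal S_0,[p_1]_{\e'},[L_1]_{\delta'})$ supported near $p_1$ and $L_1$. By shrinking $\e'$, $\delta'$ I may assume this system lives well inside $[p_1]_\e$, $[L_1]_\delta$ and is disjoint from all the other balls $[p_i]_\e$, $[L_j]_\delta$ in the obvious required ways, using the first bullet hypothesis.

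The key step is to produce an element $s\in\langle\mathcal S\rangle$ which also lies in $k\langle\mathcal S\rangle k^{-1}$. The natural candidate is to pick, among the generators, one rank-$1$ unipotent $v\in\mathcal S_0$ with $p_v=p_1$, $L_v=L_1$ (Lemma \ref{lemma - structure of unipotent} guarantees we may arrange a generator with prescribed attracting data, and Proposition \ref{prop -pro-dense}'s construction already produces generators attached to chosen $(p_i,L_i)$). Then $kvk^{-1}$ is a rank-$1$ unipotent with point of attraction $kp_1=p_2$ and fixed hyperplane $kL_1=L_2$. So I want $\mathcal S$ to contain \emph{both} $v$ (attached to $p_1,L_1$) and a conjugate $w:=kvk^{-1}$ (attached to $p_2,L_2$); then $w=kvk^{-1}\in k\langle\mathcal S\rangle k^{-1}$ while $w\in\langle\mathcal S\rangle$, giving the required non-trivial intersection element. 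The point is that $p_2=kp_1$ and $L_2=kL_1$ exactly, so $kvk^{-1}$ has \emph{the same} attracting/repelling data as a legitimate generator supported in $[p_2]_\e$, $[L_2]_\delta$; this is why the hypothesis insists on equalities $p_2=kp_1$, $L_2=kL_1$ rather than mere closeness.

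To assemble the full system I would then add generators supported near $p_2,L_2$ and near $p_3,L_3$. For the $p_2,L_2$-balls I take the image $k\mathcal S_0 k^{-1}$ of the first Schottky set (so that $v\in\mathcal S_0$ yields $w=kvk^{-1}$ among the new generators); since conjugation by $k$ carries $p_{u}\mapsto kp_u$, $L_u\mapsto kL_u$, this is automatically a Schottky set supported in $[p_2]_\e$, $[L_2]_\delta$, and its support is disjoint from the $p_1,L_1$-region and the $p_3,L_3$-region by hypothesis. Passing to high powers via Lemma \ref{lemma - dynamic of unipotent} I make the union $\mathcal S_0\cup k\mathcal S_0k^{-1}$ into a single Schottky system on $[p_1]_\e\cup[p_2]_\e$, $[L_1]_\delta\cup[L_2]_\delta$; by the ping-pong Lemma \ref{lemma - ping pong} this group contains $v$ and $kvk^{-1}$ as distinct free factors, in particular $kvk^{-1}\neq\id$ lies in the intersection. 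Finally, to get $\langle\mathcal S,k\rangle=\SL(n,\Z)$ I invoke the hypothesis $p_3=k^2p_1$, $L_3\neq k^2L_1$: choosing $u_1\in\mathcal U$ with data $(p_1,L_1)$ and $u_3\in\mathcal U$ with data $(p_3,L_3)$, Lemma \ref{lemma - finding a good pair} (applied with $g=k^2$, since $p_3=k^2p_1$ and $L_3\ne k^2L_1$) gives $\langle u_1,k^{-2}u_3k^2\rangle\simeq\Z^2$; adding suitable powers of $u_3$ supported in $[p_3]_\e$, $[L_3]_\delta$ to the Schottky system and using profinite density together with Venkataramana's Theorem \ref{Thm - Venkataramana} forces $\langle\mathcal S,k\rangle=\SL(n,\Z)$. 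Throughout, profinite density of $\langle\mathcal S\rangle$ is inherited because $\mathcal S\supseteq\mathcal S_0$ and $\langle\mathcal S_0\rangle$ is already profinitely dense (adding generators only enlarges the image mod $d$).

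The main obstacle I anticipate is the bookkeeping of all the disjointness conditions: one must verify that after conjugating $\mathcal S_0$ by $k$ and after replacing unipotents by high powers, the four Schottky axioms still hold for the combined set $\mathcal S_0\cup k\mathcal S_0k^{-1}\cup\{\text{powers of }u_3\}$ relative to $\mathcal A=\cup_i[p_i]_\e$, $\mathcal R=\cup_i[L_i]_\delta$. The cleanest way is to first shrink the neighborhoods used in Proposition \ref{prop -pro-dense} enough that the entire first Schottky system (and hence its $k$-conjugate, which is a homeomorphism of $\P$ but may distort metric balls) fits inside slightly smaller closed balls $[p_1]_{\e/2}$, $[L_1]_{\delta/2}$, so that the hypothesis $[p_i]_\e\cap[L_j]_\delta=\emptyset$ gives genuine room; the distortion of $k$ on balls is controlled since $k$ is a fixed element acting as a diffeomorphism of the compact space $\P$. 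The rest is a routine application of Lemma \ref{lemma - dynamic of unipotent} to push everything into the prescribed small neighborhoods, exactly as in the proof of Lemma \ref{throwing}.
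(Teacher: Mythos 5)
Your proposal follows essentially the same route as the paper: obtain a profinitely-dense Schottky system near $(p_1,L_1)$ with a generator $u_1$ having exactly that attracting data, place a $k$-conjugate of it near $(p_2,L_2)=(kp_1,kL_1)$ to manufacture a non-trivial element of $\langle\mathcal S\rangle\cap k\langle\mathcal S\rangle k^{-1}$, and then add a generator near $(p_3,L_3)$ so that Lemma \ref{lemma - finding a good pair} (with $g=k^2$) plus Venkataramana's theorem yield $\langle\mathcal S,k\rangle=\SL(n,\Z)$. The only cosmetic difference is that you conjugate the entire set $\mathcal S_0$ by $k$, whereas the paper only adjoins a single (high power of a) conjugate $ku_1k^{-1}$, which avoids the ball-distortion bookkeeping you had to discuss.
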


\begin{proof} Proposition \ref{prop -pro-dense} implies that there exists a finite  $\mathcal{S}_0=\{u_1,\ldots,u_r\} \subseteq \mathcal{U}$ such that 
$(\mathcal{S}_0,\mathcal{A}_0,\mathcal{R}_0)$ is profinitely-dense Schottky system where $\mathcal{A}_0:=[p_1]_\e$ and 
$\mathcal{R}_0:=[L_1]_\delta$. A quick look at the proof of this proposition implies that we can assume that $p_{u_1}=p_1$ and $L_{u_1}=L_1$. 
Denote $w_2:=ku_1k^{-1}$ and choose $w_3 \in \mathcal{U}$ such that $p_{w_3}=p_3$, $L_{w_3}=L_3$. Lemma \ref{lemma - finding a good pair} implies that $\langle u_1, k^{-2}w_3^mk^2 \rangle \cong \Z^2$ for every $m \ne 0$. 
Lemma \ref{lemma - dynamic of unipotent} implies that for large enough value of $m$, $(\mathcal{S},\mathcal{A},\mathcal{R})$ is a
profinitely-dense Schottky system where $\mathcal{S}:=\mathcal{S}_0 \cup \{w_2^m,w_3^m\}$. Note that 
$w_2 \in \langle \mathcal{S} \rangle \cap k\langle \mathcal{S} \rangle k^{-1}$ and that Theorem \ref{Thm - Venkataramana} implies that 
$\langle \mathcal{S},k \rangle=\SL(n,\Z)$.
\end{proof}

\section{Proof of Theorem \ref{thm counting}}

Zorn's lemma implies that every proper subgroup $H$ of $\SL(n,\Z)$ is contained in a maximal subgroup $M$
(Since $\SL(n,\Z)$ is finitely generated an increasing union of proper subgroups is a proper subgroup). 
If $H$ is profinitely-dense then so is $M$; hence $M$ should
have infinite index. Thus, Theorem \ref{thm counting} follows for the following proposition:

\begin{thm} 
Let $n \ge 3$. There exist $2^{\aleph_0}$ infinite-index profinitely-dense subgroups of $\SL(n,\Z)$
such that the union of any two of them generates $\SL(n,\Z)$.
\end{thm}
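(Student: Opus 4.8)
The plan is to construct a perfect binary tree of profinitely-dense Schottky systems, so that each infinite branch produces a distinct infinite-index profinitely-dense subgroup, and any two branches that disagree at some node generate all of $\SL(n,\Z)$. First I would start, using Proposition \ref{prop -pro-dense}, with a single profinitely-dense Schottky system $(\mathcal{S}_\emptyset,\mathcal{A}_\emptyset,\mathcal{R}_\emptyset)$ whose attracting and repelling sets are small (say contained in $[p]_\e$ and $[L]_\delta$ for some fixed flag $p\in L$), leaving lots of room in $\P$ to maneuver. The key point is that the Margulis--Soifer-type mechanism has a ``spare coordinate'': one can always find a rational point $q$ in a rational hyperplane $M$ with $[q]_{\e'}\cap\mathcal{R}=\emptyset$ and $[M]_{\delta'}\cap\mathcal{A}=\emptyset$, hence by Lemma \ref{adding} one can enlarge a Schottky system by a new generator $v$ with $p_v=q$, $L_v=M$ while keeping it profinitely-dense (adding elements to a profinitely-dense set keeps it profinitely-dense) and Schottky.

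Next I would do the branching. At a node $\sigma\in\{0,1\}^{<\omega}$ we have a profinitely-dense Schottky system $(\mathcal{S}_\sigma,\mathcal{A}_\sigma,\mathcal{R}_\sigma)$ with $\mathcal{A}_\sigma,\mathcal{R}_\sigma$ still ``small'' (contained in a fixed proper closed region of $\P$, with complement of nonempty interior). I choose two disjoint pairs $(q_0,M_0)$, $(q_1,M_1)$ of rational flags, both sitting in the free region and disjoint from $\mathcal{A}_\sigma\cup\mathcal{R}_\sigma$ in the cross sense of Lemma \ref{adding}, and moreover chosen so that $[q_0]\cup[M_0]$ and $[q_1]\cup[M_1]$ are disjoint from each other. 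Applying Lemma \ref{adding} once for each, I get $(\mathcal{S}_{\sigma 0},\mathcal{A}_{\sigma 0},\mathcal{R}_{\sigma 0})$ and $(\mathcal{S}_{\sigma 1},\mathcal{A}_{\sigma 1},\mathcal{R}_{\sigma 1})$, both profinitely-dense Schottky systems containing $(\mathcal{S}_\sigma,\dots)$, whose new pieces are disjoint. For a branch $\xi\in\{0,1\}^\omega$ put $\Delta_\xi:=\langle\bigcup_{k}\mathcal{S}_{\xi\restriction k}\rangle$.

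Then I would check the three required properties. Infinite index: each $\mathcal{S}_{\xi\restriction k}$ is a Schottky set, so by Lemma \ref{lemma - ping pong} the group it generates is a free product of cyclic groups, and the nested union $\Delta_\xi$ is again such a free product (an increasing union of ping-pong groups on a fixed $\P$, with the attracting/repelling data converging), in particular not of finite index in $\SL(n,\Z)$ — alternatively, $\Delta_\xi$ has a ping-pong dynamics on $\P$ with a nonempty open wandering region, so it cannot act cocompactly, hence has infinite index. Profinitely dense: already $\mathcal{S}_\emptyset\subseteq\Delta_\xi$, and a profinitely-dense subset stays profinitely-dense under enlargement, so $\Delta_\xi$ is profinitely-dense for every $\xi$. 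Pairwise generation: if $\xi\ne\eta$ they first differ at some $\sigma$, say $\xi$ extends $\sigma 0$ and $\eta$ extends $\sigma 1$; then $\Delta_\xi\cup\Delta_\eta\supseteq\mathcal{S}_{\sigma 0}\cup\mathcal{S}_{\sigma 1}\supseteq\mathcal{S}_\sigma\cup\{v_0\}\cup\{v_1\}$, where $v_0,v_1\in\mathcal{U}$ have $p_{v_0}=q_0$, $p_{v_1}=q_1$; using Theorem \ref{them - conze} / Corollary \ref{cor - conze} one can arrange from the start that the flags $(q_0,M_0)$ and $(q_1,M_1)$ are ``Venkataramana-compatible'', i.e.\ there is $h\in\SL(n,\Z)$ with $hq_0=q_1$ but $hM_0\ne M_1$ — or more simply, I apply Lemma \ref{throwing} instead of Lemma \ref{adding} at the branching step, replacing $g$ there by a suitable conjugator built from the two flags, so that $\langle\mathcal{S}_{\sigma 0}\cup\mathcal{S}_{\sigma 1}\rangle$ already contains a pair of commuting unipotents violating the $\Z^2$-freeness; then Theorem \ref{Thm - Venkataramana} forces $\langle\Delta_\xi,\Delta_\eta\rangle=\SL(n,\Z)$. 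Finally, distinctness of the $\Delta_\xi$'s for distinct $\xi$ follows since two that generate $\SL(n,\Z)$ together and are both of infinite index cannot be equal; this gives $2^{\aleph_0}$ of them. The main obstacle I expect is the bookkeeping that keeps all the $\mathcal{A}_\sigma$, $\mathcal{R}_\sigma$ uniformly inside a fixed ``small'' region while still having, at every node, enough disjoint room to place two new well-separated flags and to realize the Venkataramana pair — i.e.\ verifying that the disjointness hypotheses of Lemmas \ref{adding}/\ref{throwing} can be met simultaneously along the whole tree; once the geometry is set up, the algebraic conclusions are immediate from ping-pong and Venkataramana.
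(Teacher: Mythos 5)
There is a genuine gap in the mechanism you use to get the pairwise-generation property. You insist that the two new flags $(q_0,M_0)$ and $(q_1,M_1)$ at a branching node be disjoint, in particular $q_0 \ne q_1$, and then you need to force a $\Z^2$ of unipotents into $\langle \mathcal{S}_{\sigma 0}\cup\mathcal{S}_{\sigma 1}\rangle$ so as to invoke Theorem \ref{Thm - Venkataramana}. Neither of your two proposed fixes accomplishes this. The conjugator $h$ with $hq_0=q_1$ and $hM_0 \ne M_1$ gives a commuting pair $\langle v_0, h^{-1}v_1 h\rangle \cong \Z^2$, but $h$ need not lie in $\langle \Delta_\xi\cup\Delta_\eta\rangle$, so this $\Z^2$ need not sit inside the group you want to show is all of $\SL(n,\Z)$. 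Similarly, Lemma \ref{throwing} produces a Schottky set $\mathcal{S}_+$ and an outside element $g$ with $\langle\mathcal{S}_+,g\rangle=\SL(n,\Z)$; it does not produce a commuting pair inside $\langle\mathcal{S}_+\rangle$ itself, so "$\langle\mathcal{S}_{\sigma 0}\cup\mathcal{S}_{\sigma 1}\rangle$ already contains a pair of commuting unipotents" does not follow. The commuting pair in the proof of Lemma \ref{throwing} is $\{u_1, g^{-1}u_2 g\}$, which requires $g$.

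The paper's actual construction sidesteps this entirely with a cleaner idea that your disjointness hypothesis forbids: at the $i$-th slot one chooses two rank-1 unipotents $u_{i,1}, u_{i,2}$ with the \emph{same} attracting point $p_i = p_{u_{i,1}} = p_{u_{i,2}}$ but \emph{different} fixed hyperplanes $L_{u_{i,1}} \ne L_{u_{i,2}}$, both placed inside a small neighborhood $(L_i)_{\delta_i}$. Since two rank-1 unipotents with the same image line automatically commute (this is the observation preceding Lemma \ref{lemma - finding a good pair}), $\langle u_{i,1},u_{i,2}\rangle \cong \Z^2$ with no conjugator needed, and both candidates share the same attracting ball $(p_i)_{\varepsilon_i}$ and essentially the same repelling neighborhood, so either choice yields a Schottky set with respect to the \emph{same} fixed $\mathcal{A},\mathcal{R}$. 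Moreover, rather than a tree with branching bookkeeping, the paper fixes all the balls $[p_i]_{\varepsilon_i}, [L_i]_{\delta_i}$ ($i\ge 0$) in advance with the cross-disjointness $[p_i]_{\varepsilon_i}\cap[L_j]_{\delta_j}=\emptyset$ for $i\ne j$, seeds with a finite profinitely-dense Schottky set $\mathcal{S}_0$ supported in the $0$-th ball (Proposition \ref{prop -pro-dense}), and then for each $f:\Z_{>0}\to\{0,1\}$ takes $\mathcal{S}_f = \mathcal{S}_0\cup\{u_{i,f(i)}\mid i\ge 1\}$. Distinct $f,g$ differ at some $i$, so $\mathcal{S}_f\cup\mathcal{S}_g\supseteq\{u_{i,1},u_{i,2}\}$ and Venkataramana applies directly. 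You should replace your two disjoint flags at each step by a single shared attracting point with two nearby hyperplanes, and replace the tree by this direct-product indexing; then your outline becomes essentially the paper's argument. (Your infinite-index justification also needs a word: the groups $\langle\mathcal{S}_f\rangle$ are free by Lemma \ref{lemma - ping pong}, and a finite-index subgroup of $\SL(n,\Z)$, $n\ge 3$, has property (T) and so cannot be free.)
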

    
\begin{proof} For every non-negative integer $i$ fix a rational point $p_i$ belonging to a rational $(n-2)$-dimensional 
subspace $L_i$ and two numbers $\delta_i \ge \e_i >0$ such that $[p_i]_{\varepsilon_i} \cap [L_j]_{\delta_j}=\emptyset$  for every $i \ne j$.
Let $\mathcal{A}$ and $\mathcal{R}$ be the closures of $\cup_{i \ge 0}(p_i)_{\epsilon_i}$ and $\cup_{i \ge 0}(L_i)_{\delta_i}$ respectively. Proposition \ref{prop -pro-dense} implies that there exists a finite subset $\mathcal{S}_0\subseteq \mathcal{U}$ such that $(\mathcal{S}_0,\mathcal{A}_0,\mathcal{R}_0)$ is a profinitely-dense
Schottky system where  $\mathcal{A}_0 = [p_0]_{\varepsilon_{0}}$ and $\mathcal{R}_0 = [L_0]_{\delta_0}$.  Lemmas \ref{lemma - structure of unipotent} and \ref{lemma - dynamic of unipotent} imply that for every $i \ge 1$ there are $u_{i,1},u_{i,2} \in \mathcal{U}$ such that:
\begin{enumerate}
\item $p_i=p_{u_{i,1}}=p_{u_{i,2}}$ and $L_{u_{i,1}} \ne L_{u_{i,2}} \subseteq (L_i)_{\delta_i}$ (hence, $\langle u_{i,1},u_{i,2}\rangle\cong \Z^2$);
\item $u_{i,j}^k(x) \in (p_i)_\varepsilon$ for every $1 \le j \le 2$, every $x \not \in (L_i)_{\delta_i}$ and every $k \ne 0$. 
\end{enumerate}
For every function $f$ from the positive integers to $\{0,1\}$ the set $\mathcal{S}_f:=\mathcal{S}_0\cup \{u_{i,f(i)\mid i\ge 1}\}$ is a Schottky set  with respect to the attracting set  $\mathcal{A}$ and the repelling set $\mathcal{R}$. If $f$ and $g$ are distinct function then $\mathcal{S}_f \cup \mathcal{S}_g $ contains $\{u_{i,1},u_{i,2}\}$ for some $i \ge 1$ so Theorem \ref{Thm - Venkataramana} implies that 
$\langle \mathcal{S}_f \cup \mathcal{S}_g \rangle =\SL(n,\Z)$. 
\end{proof}

Theorem \ref{thm counting} implies that there are non-conjugate infinite-index maximal subgroups in $\SL(n,\Z)$.  
Indeed, since $\SL(n,\Z)$ is countable so is the conjugacy class of every infinite index subgroup.
We can say a little more: 

\begin{prop}
There are infinite-index maximal subgroups whose actions on $\P$ have
different topological properties. For example, when $n \ge 4$ some but not all
infinite-index maximal subgroups $M$ of $\SL(n,\Z)$ have the following property:

There exists a finitely-generated subgroup $H \le M$ and a $2$-dimensional subspace $l \in \mathbb{L}_2$ which is contained in
the closure of every $H$-orbit. 
\end{prop}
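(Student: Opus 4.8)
The plan is to produce two families of infinite-index maximal subgroups distinguished by the stated orbit-closure property, using the Schottky machinery developed in Section 2. First I would construct a subgroup $M_1$ having the property. Start, as in the proof of Theorem \ref{thm counting}, by fixing a profinitely-dense Schottky system $(\mathcal S_0,\mathcal A_0,\mathcal R_0)$ with $\mathcal A_0=[p_0]_{\e_0}$, $\mathcal R_0=[L_0]_{\delta_0}$; but now arrange in addition a fixed $2$-dimensional subspace $l\in\mathbb L_2$ together with three (or more) rational points $q_1,q_2,q_3\in l$ in general position inside $l$, each lying on a rational hyperplane $M_j\in\mathbb L_{n-2}$ with $q_j\in M_j$, and use Lemma \ref{adding} repeatedly to enlarge the Schottky system so that it also contains rank-$1$ unipotents $v_1,v_2,v_3$ with $p_{v_j}=q_j$, $L_{v_j}=M_j$. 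Let $H=\langle v_1,v_2,v_3\rangle$. The point is that the forward dynamics of suitably high powers of the $v_j$ push almost all of $\P$ into arbitrarily small neighbourhoods of $q_1,q_2,q_3$, so the closure of any $H$-orbit contains $\{q_1,q_2,q_3\}$, hence their span, hence $l$ (choosing enough $q_j$ in general position in $l$ guarantees the span is exactly $l$; more robustly, one adds a dense sequence of rational points of $l$, each on its own rational hyperplane). Then complete $\mathcal S_0\cup\{v_1,v_2,v_3,\dots\}$ to a maximal Schottky-type generating set exactly as in Theorem \ref{thm counting}: the resulting profinitely-dense subgroup $\Gamma_1$ has infinite index, and by Zorn's lemma sits inside a maximal subgroup $M_1$, which is still profinitely-dense hence of infinite index, and which contains $H$.

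Second I would exhibit an infinite-index maximal subgroup $M_2$ without the property; here the natural candidate is the subgroup $\Delta$ of Theorem \ref{thm dense}, whose limit set in $\P$ is nowhere dense. For such a $\Delta$, every finitely-generated $H\le\Delta$ has all its orbit closures contained in the limit set of $\Delta$ together with finitely many extra points coming from the generators' dynamics — in any case contained in a nowhere-dense closed set — so no $2$-plane $l$ can lie in the closure of every $H$-orbit, since a $2$-plane is not nowhere dense (it has nonempty interior in no chart, but it certainly is not contained in a nowhere-dense set when $n\ge 4$, as $\dim l=2<n-1$ yet $l$ is a positive-dimensional submanifold and any set containing $l$ in the closure of an orbit forces the orbit closure to be at least as large as $l$; the contradiction is with nowhere-density of the limit set only if one is careful — see below). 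To make this airtight I would instead argue: if $l\subseteq\overline{Hx}$ for all $x$, then in particular the limit set of $\Delta$ contains $l$ (as $\overline{Hx}\subseteq\overline{\Delta x}$ and the latter, for $x$ outside a small exceptional set, is the limit set), so the limit set contains a $2$-plane and is therefore not nowhere dense — contradicting Theorem \ref{thm dense}. The hypothesis $n\ge 4$ enters because one needs a $2$-dimensional $l$ to be a proper subspace, i.e. $2\le n-1$ with room to spare, and because the construction of $\Delta$ in Theorem \ref{thm dense} must leave a genuine $2$-plane available; for $n=3$ a $2$-plane is a hyperplane and the dichotomy degenerates.

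The main obstacle I anticipate is the verification that the enlarged Schottky system in the first construction remains a Schottky system \emph{and} remains profinitely-dense after we have thrown in the whole sequence of $v_j$'s clustering near $l$: Lemma \ref{adding} handles one unipotent at a time under disjointness hypotheses $[p]_\e\cap\mathcal A=\emptyset$, $[L]_\delta\cap\mathcal R=\emptyset$, so one must choose the neighbourhoods $[q_j]_{\e_j}$, $[M_j]_{\delta_j}$ shrinking fast enough (and the attracting/repelling sets $\mathcal A,\mathcal R$ as the closures of the unions, as in Theorem \ref{thm counting}) that all the pairwise Schottky conditions (2) are met while still having $\mathcal A$ nonempty-interior-free so that the final group is infinite index — this is exactly the bookkeeping already carried out in the proof of Theorem \ref{thm counting}, so it should go through verbatim with the extra points included. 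A secondary point needing care is showing $H=\langle v_1,v_2,v_3\rangle$ (or its closure) really forces $l\subseteq\overline{Hx}$ for \emph{every} $x\in\P$, not just generic $x$: for $x$ lying in one of the repelling hyperplanes $M_j$ the element $v_j$ fixes $x$, but one still has the other two generators to move $x$ off $M_j$ after which Lemma \ref{lemma - dynamic of unipotent} applies; chasing this for all $x$ simultaneously is a short ping-pong argument using Lemma \ref{lemma - ping pong}. Finally, one should remark that by varying $f$ as in Theorem \ref{thm counting} one gets $2^{\aleph_0}$ maximal subgroups of each of the two types, so "some but not all" is literally correct.
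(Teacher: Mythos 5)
The proposal has two genuine gaps, one in each half, and in both cases the underlying idea diverges from the paper's.

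For the construction \emph{with} the property: the crucial step ``the closure of any $H$-orbit contains $\{q_1,q_2,q_3\}$, hence their span, hence $l$'' does not hold. Closure is a topological operation, span is a linear one, and $\{q_1,q_2,q_3\}$ is already closed; the orbit closure of a generic $x$ under a ping-pong group generated by rank-$1$ unipotents accumulates at the attracting points (and at their translates under words, producing a totally disconnected Cantor-type set), not at the projective span of the attracting points. Your attempted repair, taking a dense sequence $q_1,q_2,\dots\in l$, produces an $H$ that is no longer finitely generated, so it does not prove the statement. The paper's actual route is quite different: it exhibits a maximal subgroup containing an element $g$ whose extremal eigenvalues $\alpha_1,\alpha_n$ satisfy $\alpha_1^m,\alpha_n^m\notin\R$ for all $m\ge1$, with $|\alpha_1|<|\alpha_i|<|\alpha_n|$ for $2<i<n-1$. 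The point is that $g$ then acts on its extremal invariant planes by an irrational rotation, so the orbit closures genuinely contain a continuum; no such continuous attractor can be produced from a Schottky family of rank-$1$ unipotents, which is precisely why the $n\ge4$ hypothesis and the eigenvalue mechanism are needed here.

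For the construction \emph{without} the property: your argument asserts that if the limit set of $\Delta$ contains a $2$-dimensional subspace $l$ then the limit set ``is therefore not nowhere dense.'' This is false: a $2$-dimensional projective subspace $l\in\mathbb{L}_2$ is itself a closed set with empty interior in $\P=\P^{n-1}$ whenever $n\ge4$, i.e.\ $l$ is nowhere dense, so ``the limit set contains $l$'' is perfectly compatible with ``the limit set is nowhere dense.'' (Your parenthetical already flags this worry but the ``airtight'' version does not actually fix it.) What Theorem~\ref{thm dense} gives is only that $\Delta U\cap V=\emptyset$ for two specific open sets; this forces any candidate $l$ to avoid $V$, but it does not forbid such an $l$ from existing. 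The paper explicitly does not deduce this half from Theorem~\ref{thm dense}: it says the maximal subgroup without the property is obtained ``by using similar ideas to the ones used in the proofs of Theorems~\ref{thm dense}, \ref{thm trivial} and \ref{thm not 2-trans},'' i.e.\ by running a fresh recursive Schottky construction with additional constraints designed to kill every potential pair $(H,l)$, not by quoting the existing $\Delta$.
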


The existence of maximal subgroups with this property follows form the existence
of maximal subgroup $M$ which contains an element $g \in \SL(n,\Z)$ 
with eigenvalues $\alpha_1,\ldots,\alpha_n$ such that $\alpha_1^m$ and $\alpha_n^m$ are not real for all $m \ge 1$
and $|\alpha_1|<|\alpha_i|<|\alpha_n|$ for all $2<i<n-1$. 
The construction of maximal subgroups without this property can be done by using similar ideas to the ones used
in the proofs of Theorems \ref{thm dense}, \ref{thm trivial} and \ref{thm not 2-trans} below. 
  

\section{Proof of Theorem \ref{thm dense}}

In order to prove the Theorem \ref{thm dense} it is enough to find an infinite-index maximal subgroup $\Delta$ and two open subsets $U$ and $V$ such that $gU \cap V =\emptyset$ for all $g \in \Delta$. Let $V^\vee$ be the dual of $V:=\R^n$. The natural map form $\P=\mathrm{Gr}(1,V)$ to $\mathrm{Gr}(n-1,V^\vee)$ is an $\SL(n,\Z)$-equivariant  
homeomorphism. Thus, Theorem \ref{thm dense} follows from the following proposition:

\begin{prop}\label{prop dense} Let $L_1$ and $L_2$  be distinct $(n-2)$-dimensional subspaces of $\P$. There exist $\rho>0$ and a free profinitely-dense subgroup 
$H$ such that if $L \subseteq (L_2)_\rho$ and $gL \subseteq (L_1)_\rho$ for some $g \in \SL(n,\Z)$ and some $(n-2)$-dimensional  subspace $L$ then $\langle g,H\rangle =\SL(n,\Z)$.
\end{prop}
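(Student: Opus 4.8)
The plan is to build $H$ as the group generated by a profinitely-dense Schottky system whose attracting and repelling sets avoid neighborhoods of $L_1$ and $L_2$, and then to feed any offending $g$ into Venkataramana's theorem (Theorem \ref{Thm - Venkataramana}) via Lemma \ref{throwing}. More precisely, since $L_1 \ne L_2$ are distinct $(n-2)$-dimensional subspaces, I can pick rational points $p_1 \in L_1'$ and $p_2 \in L_2'$ with $L_i'$ rational $(n-2)$-dimensional subspaces very close to $L_i$, together with $\delta \ge \e > 0$, so that $[p_i]_\e \cap [L_j']_\delta = \emptyset$ for $i \ne j$ and so that $[L_i']_\delta$ is contained in a $\rho/2$-neighborhood of $L_i$. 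Applying Proposition \ref{prop -pro-dense} twice (or the multi-ball version used in the proof of Theorem \ref{thm counting}) I obtain a finite $\mathcal{S} \subseteq \mathcal{U}$ such that $(\mathcal{S}, \mathcal{A}, \mathcal{R})$ is a profinitely-dense Schottky system with $\mathcal{A} = [p_1]_\e \cup [p_2]_\e$ and $\mathcal{R} = [L_1']_\delta \cup [L_2']_\delta$; set $H := \langle \mathcal{S} \rangle$, which is free by the ping-pong Lemma \ref{lemma - ping pong}.

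Now suppose $g \in \SL(n,\Z)$ and some $(n-2)$-dimensional subspace $L$ satisfy $L \subseteq (L_2)_\rho$ and $gL \subseteq (L_1)_\rho$. The idea is to perturb $L$ to a rational $(n-2)$-subspace $\tilde L$ through a rational point $\tilde p$, close enough that $\tilde L$ and $g\tilde L$ still lie in the chosen small neighborhoods of $L_2$ and $L_1$ respectively — so that $[\tilde p]_\e$ and $[g\tilde p]_\e$ are disjoint from $\mathcal{R}$, and $[\tilde L]_\delta$, $[g\tilde L]_\delta$ are disjoint from $\mathcal{A}$, and the cross-disjointness conditions of Lemma \ref{throwing} hold. (Here I use that the data $p_i, L_i', \e, \delta$ can be fixed first with room to spare, and $\rho$ shrunk afterward.) The one genuine subtlety is hypothesis (3) of Lemma \ref{throwing}, which needs $L_1 \ne g L_2$ in the relevant incarnation, i.e. I must arrange $g\tilde L \ne$ the fixed subspace attached to the new Schottky generator near $L_1$; since I have freedom in choosing $\tilde p, \tilde L$ and since $g L$ is a single subspace, a generic rational choice avoids the one bad value, invoking Lemma \ref{lemma - finding a good pair} to get the required $\Z^2$ of commuting unipotents. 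Then Lemma \ref{throwing} produces $\mathcal{S}_+ \supseteq \mathcal{S}$ with $\langle \mathcal{S}_+, g \rangle = \SL(n,\Z)$, and since $\langle \mathcal{S}_+ \rangle$ is still a Schottky group with the same $H = \langle \mathcal{S} \rangle$ one gets $\langle g, H \rangle \supseteq \langle g, \mathcal{S}_+ \rangle = \SL(n,\Z)$ — wait, that needs $\mathcal{S}_+ \subseteq \langle g, H\rangle$, which holds only if the new generators lie in $\langle g, H \rangle$; so instead one should apply Lemma \ref{throwing} directly with the original $\mathcal{S}$ and conclude $\langle \mathcal{S}, v_1, v_2, g\rangle = \SL(n,\Z)$ where $v_1, v_2$ are built from $\tilde p, \tilde L, g\tilde p, g\tilde L$ and in particular $v_2 = $ a power of a conjugate that already lies in $\langle g, H\rangle$ once we note $v_1 \in \langle \mathcal{U} \rangle$ is a power of $u_1$ with $p_{u_1}, L_{u_1}$ matching a generator — this is exactly the mechanism of Lemma \ref{starting}.

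Let me restructure the last step more carefully: the cleanest route is to choose the perturbed data so that $\tilde p = p_{u}$, $\tilde L = L_u$ for $u$ one of the Schottky generators already in $\mathcal{S}$ near $L_2$ (possible by the remark in the proof of Lemma \ref{starting} that one controls which unipotent appears), and then $g u g^{-1}$ is a unipotent with point of attraction $g\tilde p \in (L_1)_\rho$-ish and fixed hyperplane $g\tilde L$; by Lemma \ref{lemma - finding a good pair}, provided $g\tilde L \ne L_{w}$ for a generator $w$ of $\mathcal{S}$ near $L_1$ — arrange this by the generic-choice argument — we get $\langle w, g u g^{-1}\rangle \cong \Z^2$. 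Since $w, u \in H$ and $g u g^{-1} \in \langle g, H\rangle$, the group $\langle g, H \rangle$ is profinitely-dense (it contains $H$) and contains a $\Z^2$ generated by $w \in \mathcal{U}$ and the unipotent $gug^{-1}$, so Theorem \ref{Thm - Venkataramana} gives $\langle g, H \rangle = \SL(n,\Z)$. The main obstacle, as indicated, is bookkeeping the quantifier order — fixing $\mathcal{S}, \mathcal{A}, \mathcal{R}, \e, \delta$ with enough slack so that a single $\rho$ works uniformly for all $g$ — and checking that the incidence/disjointness hypotheses survive the rational perturbation; the Venkataramana input and the Schottky machinery are entirely off the shelf from the Preliminaries.
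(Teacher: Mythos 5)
The fundamental flaw is that you fix $H=\langle\mathcal{S}\rangle$ upfront as a finite Schottky group, and then for an arbitrary offending $g$ you want to find $u,w\in\mathcal{S}$ (a generator near $L_2$ and one near $L_1$) with $\langle w, gug^{-1}\rangle\cong\Z^2$. For the commuting-unipotents criterion (the remark before Lemma~\ref{lemma - finding a good pair}, or the lemma itself) to apply, the two rank-1 unipotents must have the \emph{same} point of attraction, i.e.\ you need the exact identity $p_w=g\,p_u$. But $p_u$ and $p_w$ are fixed once $\mathcal{S}$ is chosen, whereas the only constraint on $g$ is that it carries some hyperplane near $L_2$ into a neighborhood of $L_1$ --- a coarse, open condition that says nothing about sending one specific rational point to another. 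For almost every such $g$ the relation $gp_u=p_w$ will fail, and then $w$ and $gug^{-1}$ are simply two unipotents with different attracting points, which need not commute and to which Theorem~\ref{Thm - Venkataramana} does not apply. You cannot repair this by "generic rational choice" of $\tilde p,\tilde L$ either, because Lemma~\ref{lemma - finding a good pair} requires the attracting points to match \emph{exactly}, not approximately, and $\tilde p=p_u$ must already be a generator of the fixed $H$.

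This is precisely why the paper does \emph{not} fix $H$ first. It enumerates the offending elements $(g_i)_{i\ge 1}$ and grows a nested sequence of profinitely-dense Schottky systems $(\mathcal{S}_i,\mathcal{A}_i,\mathcal{R}_i)$, at stage $i$ adding \emph{new} unipotent generators $v_1,v_2$ tailored to $g_i$ so that $p_{v_1}=g_i\,p_{v_2}$ holds by construction (Lemma~\ref{throwing}); then $H:=\langle\cup_i\mathcal{S}_i\rangle$. Your first pass hinted at this but then stepped back from it. Making the induction go through requires nontrivial bookkeeping that your sketch omits: Claims~\ref{com claim 1}--\ref{com claim 3} maintain invariants guaranteeing that at every stage there is still room near $L_1$ and $L_2$ in which to place the new generators (the accumulated $\mathcal{A}_i,\mathcal{R}_i$ must not obstruct this). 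There is also a further obstruction your argument does not see: a point $w\in L\setminus\mathcal{R}$ may have $gw\in\mathcal{R}$, in which case Lemma~\ref{throwing} cannot be applied directly; the paper first inserts an auxiliary unipotent $u$ whose fixed hyperplane passes through $gw$ and replaces $g$ by $h:=g^{-1}ug$, which fixes $w$, before choosing the pair of rational points to feed into Lemma~\ref{throwing}. Without this two-step move and the inductive invariants, the construction does not close.
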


The rest of this section is devoted to the proof of Proposition \ref{prop dense}. 
The basic idea is to find $\rho>0$ for which Lemma \ref{throwing} allows us to inductively construct an ascending sequence $(\mathcal{S}_k,\mathcal{A}_K,\mathcal{R}_k)_{k \ge 0}$ of profintely-dense Schottky system such that $\langle \mathcal{S}_k,g_k\rangle=\SL(n,\Z)$
for every $k \ge 1$ where $(g_k)_{k \ge 1}$ is an enumeration of the $g$'s such that 
$g(L_2)_{\rho} \cap (L_1)_\rho \ne \emptyset$.

Before starting with the formal proof let us briefly explain the main idea of the proof.  Let $(\mathcal{S}_0,\mathcal{A}_0,\mathcal{R}_0)$ be a profintly-dense Schottky system  such that $L_1 \cap \mathcal{A}_0 =\emptyset$ and $L_2\setminus \mathcal{R}_0\ne \emptyset$. Assume for simplicity that $g_1L_2 =L_1$. Our goal is to show that we can find a finite Schottky system $(\mathcal{S}_1,\mathcal{A}_1,\mathcal{R}_1)$ which contains $(\mathcal{S}_0,\mathcal{A}_0,\mathcal{R}_0)$
such that $\langle S_1,g_1\rangle=\SL(n,\Z)$. Choose a point $w \in L_2 \setminus \mathcal{R}_0$. We would like to apply Lemma \ref{throwing} with respect to $w$ and $g_1w$  and suitable hyperplanes $w \in L_w$ and $gw_1 \in L_{g_1w}$. The problem is that it is possible that $g_1w \in \mathcal{R}_0$, so the assumptions of Lemma \ref{throwing} do not hold
no matter what $L_w$ and $L_{g_1w}$ are. 
Therefore we first construct a Schottky system $(\mathcal{S}_*,\mathcal{A}_*,\mathcal{R}_*)$ such that $w \not \in \mathcal{R}_*$, $\mathcal{S}_*:=\mathcal{S} \cup \{u\}$ and $g_1w \in L_u$.  
Note that  there is a lot of freedom in choosing the fixed hyperplane $L_u$ and the attracting point $p_u$ of $u$.  The existense of  $(\mathcal{S}_*,\mathcal{A}_*,\mathcal{R}_*)$ is guaranteed by some technical assumptions on $(\mathcal{S}_0,\mathcal{A}_0,\mathcal{R}_0)$; One 
of these assumptions is the requirement that $\mathcal{A}_0 \cap L_1=\emptyset$ which guarantees that $g_1w \not \in \mathcal{A}_0$. 
Denote $h:=g_1^{-1}ug_1$ so $h(w)=w$. Note that if $(\mathcal{S}_1,\mathcal{A}_1,\mathcal{R}_1)$ is a finite Schottky system such that 
$\langle S_1,h\rangle=\SL(n,\Z)$ then also $\langle S_1,g_1\rangle=\SL(n,\Z)$. The advantage now is that we can choose a rational point $w_1$ which is very close to $w$ such that 
$hw_1 \ne w_1$ and $hw_1$ is also very close to $w$. In particular $w_1,hw_1 \not \in \mathcal{R}_*$. Once more we use  some technical assumptions on $(\mathcal{S}_*,\mathcal{A}_*,\mathcal{R}_*)$ to show that there
exist rational hyperplanes $w_1 \in L_{w_1}$ and $hw_1 \in L_{gw_1}$ and positive numbers $\e$ and $\delta$  for which the assumptions of Lemma \ref{throwing} hold. Thus, there exists a finite Schottky system $(\mathcal{S}_1,\mathcal{A}_1,\mathcal{R}_1)$
such that $\langle S_1,h\rangle=\SL(n,\Z)$.  

\medskip

We now start the formal proof. Let $L_0, $$L_1$ and $L_2$  be fixed distinct rational $(n-2)$-dimensional subspaces of $\P$ and let $p_0 \in L_0 \setminus L_1 \cup L_2$ be a fixed point.  We will use $p_0$ and $L_0$ in the last part of the proof to construct the profinitely-dense Schottky system $(\mathcal{S}_0,\mathcal{A}_0,\mathcal{R}_0)$.
\begin{claim}\label{com claim 1}
There exists $\rho>0$
with the following properties:
\begin{enumerate}
\item For every $x \in [L_1]_\rho$ there exists an $(n-2)$-dimensional subspace $L_x$ containing $x$ such that $L_x\cap [p_0]_{\rho}=\emptyset$ and $L_x \setminus [L_0 \cup L_1 \cup L_2]_{\rho} \ne \emptyset$. 
\item For every $(n-2)$-dimensional subspace $L \subseteq [L_2]_{\rho}$ there exist $x \in L \setminus [L_0 \cup L_1]_{\rho}$ and an $(n-2)$-dimensional subspace $ L_x $ containing $x$ such that
$L_x$ is not contained  in $[L_2]_\rho$ and $L_x \cap [p_0]_{\rho}=\emptyset$.
\setcounter{enumTemp}{\theenumi}
\end{enumerate}  
\end{claim}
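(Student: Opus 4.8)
The plan is to prove both clauses by the same compactness-plus-perturbation scheme, resting on one elementary observation. Since $n\ge 3$, the $(n-2)$-dimensional subspaces of $\P$ containing a fixed point $q$ form a real projective space of dimension $n-2\ge 1$; inside it, those that also contain a second fixed point $p_0\ne q$ form a proper projective subspace, and excluding the three values $L_0,L_1,L_2$ removes only finitely many further points. Hence for every $q\ne p_0$ there is an $(n-2)$-dimensional subspace $M$ with $q\in M$, $p_0\notin M$ and $M\notin\{L_0,L_1,L_2\}$; and since an $(n-2)$-dimensional subspace contained in $L_0\cup L_1\cup L_2$ would have to coincide with one of them (a linear subspace being irreducible), any such $M$ contains a point at positive distance from $L_0\cup L_1\cup L_2$. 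I also record two routine metric facts: convergence $M_k\to M_\infty$ in $\mathbb{L}_{n-2}$ lets one transfer points between the limit and the approximants up to the error $\dist_{\mathbb{L}_{n-2}}(M_k,M_\infty)$, and a positive distance between two closed sets survives small perturbations of either one.

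For clause (1) I would argue by contradiction: if no $\rho$ works, then for every $k$ there is $x_k\in[L_1]_{1/k}$ such that each $(n-2)$-dimensional subspace $M\ni x_k$ either meets $[p_0]_{1/k}$ or lies in $[L_0\cup L_1\cup L_2]_{1/k}$. Passing to a subsequence, $x_k\to x_\infty\in L_1$, so $x_\infty\ne p_0$. Choose $M_\infty\ni x_\infty$ with $p_0\notin M_\infty$ and $M_\infty\notin\{L_0,L_1,L_2\}$ as above, and then choose $(n-2)$-dimensional subspaces $M_k\ni x_k$ with $M_k\to M_\infty$ (possible because $x_k\to x_\infty$; concretely, in dual coordinates one may take the defining covector of $M_k$ to be $\mathbf a_\infty-\|\mathbf x_k\|^{-2}\langle\mathbf a_\infty,\mathbf x_k\rangle\,\mathbf x_k$, where $\mathbf a_\infty,\mathbf x_\infty,\mathbf x_k$ are representatives of $M_\infty,x_\infty,x_k$). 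Since $\dist_\P(p_0,M_\infty)>0$ and $M_\infty$ has a point at positive distance from $L_0\cup L_1\cup L_2$, for large $k$ we get $M_k\cap[p_0]_{1/k}=\emptyset$ and $M_k\not\subseteq[L_0\cup L_1\cup L_2]_{1/k}$, contradicting the choice of $x_k$.

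Clause (2) runs along the same lines. If it fails, for each $k$ there is an $(n-2)$-dimensional subspace $L^{(k)}\subseteq[L_2]_{1/k}$ for which every $x\in L^{(k)}\setminus[L_0\cup L_1]_{1/k}$ and every $(n-2)$-dimensional subspace $M\ni x$ satisfy $M\subseteq[L_2]_{1/k}$ or $M\cap[p_0]_{1/k}\ne\emptyset$. The inclusion $L^{(k)}\subseteq[L_2]_{1/k}$ forces every Grassmannian limit point of $(L^{(k)})$ to be contained in $L_2$, hence to equal $L_2$, so $L^{(k)}\to L_2$. Now fix $x_\infty\in L_2\setminus(L_0\cup L_1)$ — nonempty since $L_2\cap L_0$ and $L_2\cap L_1$ are proper subspaces of $L_2$ — and an $(n-2)$-dimensional subspace $M_\infty\ni x_\infty$ with $p_0\notin M_\infty$ and $M_\infty\ne L_2$ (available by the dimension count, using $x_\infty\ne p_0$ because $p_0\notin L_2$). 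Pick $x_k\in L^{(k)}$ with $x_k\to x_\infty$ and $M_k\ni x_k$ with $M_k\to M_\infty$ as before. For large $k$, the positive distances $\dist_\P(x_\infty,L_0\cup L_1)$, $\dist_\P(p_0,M_\infty)$ and $\dist_{\mathbb{L}_{n-2}}(M_\infty,L_2)$ respectively give $x_k\notin[L_0\cup L_1]_{1/k}$, $M_k\cap[p_0]_{1/k}=\emptyset$ and $M_k\not\subseteq[L_2]_{1/k}$, contradicting the failure hypothesis.

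Since at least one of the two clauses must fail for infinitely many $k$ whenever no single $\rho$ works, one of the two contradictions always applies, and the claim follows. The only step I expect to need genuine care is the construction of the approximating subspaces $M_k\ni x_k$ with $M_k\to M_\infty$, i.e. the openness of the projection of the incidence variety $\{(x,M):x\in M\}\subseteq\P\times\mathbb{L}_{n-2}$ onto $\P$; everything else is bookkeeping with the metrics $\dist_\P$ and $\dist_{\mathbb{L}_{n-2}}$, and the hypothesis $n\ge 3$ enters only through $n-2\ge 1$.
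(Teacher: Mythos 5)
Your proof is correct and takes essentially the same approach as the paper: both arguments rest on the dimension count ($n-2\ge 1$) to produce a hyperplane through a given point avoiding $p_0$ and not equal to $L_0,L_1,L_2$, on the stability of these avoidance conditions under perturbation of $x$ and $L_x$, and on compactness of $[L_1]_{\rho_0}$ and of the set of hyperplanes near $L_2$ to extract a uniform $\rho$. The only cosmetic difference is that the paper runs the compactness directly (open neighbourhoods plus finite subcover) whereas you run it sequentially via contradiction, together with the final monotonicity-in-$\rho$ observation that lets you take the minimum of the two thresholds.
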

\begin{proof}[Proof of Claim \ref{com claim 1}]
Fix $\rho_0>0$ such that:
\begin{itemize}
\item $L_0$ and $L_1$ are not contained in $[L_2]_{\rho_0}$;
\item  $p_0 \not \in [L_1]_{\rho_0}$. 
\end{itemize}
For every $x \in [L_1]_{\rho_0}$ there exist an $(n-2)$-dimensional subspace $L_x$ containing $x$ and  $\rho_x>0$ such that $L_x\cap [p_0]_{\rho_x}=\emptyset$ and $L_x \setminus [{L}_0 \cup L_1 \cup L_2]_{\rho_x} \ne \emptyset$. Since
$[p_0]_{\rho_x}$ and $[{L}_0]_{\rho_x}$ are closed sets, if $y$ is close enough to $x$ and the hyperplane $L_y $ containing $y$ is chosen to be close enough to $L_x$ then $L_y\cap [p_0]_{\rho_x}=\emptyset$ and $L_y \setminus [{L}_0 \cup L_1 \cup L_2]_{\rho_x} \ne \emptyset$.
Thus, the compactness of $[L_1]_{\rho_0}$ implies that there exists a uniform $\rho_1>0$ such that for every $x \in [L_1]_{\rho_0}$
there exists an $(n-2)$-dimensional subspace $L_x$ containing $x$  such that $L_x\cap [p_0]_{\rho_1}=\emptyset$ and $L_x \setminus [{L}_0 \cup L_1 \cup L_2]_{\rho_1} \ne \emptyset$.  

The set of $(n-2)$-dimensional subspaces of $\P$ which are contained in $[L_2]_{\rho_0}$ is compact. Thus, a similar argument to the one above implies that there exist $\rho_2>0$ such that for every $(n-2)$-dimensional subspace $L \subseteq [L_2]_{\rho_0}$ there exist $x \in L \setminus [L_0 \cup L_1]_{\rho_2}$ and an $(n-2)$-dimensional subspace $L_x$ containing $x$ such that $L_x$  is not contained in $[L_2]_{\rho_2}$ and $L_x \cap [p_0]_{\rho_2}=\emptyset$. Then $\rho:=min(\rho_0,\rho_1,\rho_2)$ satisfies the requirements. 
\end{proof} 

We fix $\rho>0$ which satisfies the requirements of Claim \ref{com claim 1}.

\begin{claim}\label{com claim 2} Assume that 
$\mathcal{A} \subseteq \mathcal{R}$ are closed sets such that:   
\begin{enumerate}
\item $[L_0\cup L_1]_{\rho}\subseteq \mathcal{R}$ and $[p_0]_{\rho}\subseteq \mathcal{A}$;
\item\label{tmp 125} For every $x \in [L_1]_\rho$ there exists an $(n-2)$-dimensional subspace $L_x$ containing $x$ such that $L_x \cap \mathcal{A}=\emptyset$ and
 $L_x \setminus (\mathcal{R}\cup[L_2]_\rho) \ne \emptyset$;
 \item\label{tmp 123} For every $(n-2)$-dimensional subspace $L \subseteq [L_2]_\rho$ there exist $x \in L \setminus  \mathcal{R}$ and an $(n-2)$-dimensional subspace $L_x $ containing $ x$ such that
 $L_x$ is not contained in  $[L_2]_\rho$ and $L_x \cap \mathcal{A}=\emptyset$.
 \setcounter{enumTemp}{\theenumi}
\end{enumerate}
Then for every $(n-2)$-dimensional subspace $L$ which is disjoint from $\mathcal{A}$, not contained in $[L_2]_\rho$ and contains a point $p \in L \setminus  \mathcal{R}$ there
exists $\delta>0$ such that:
\begin{enumerate}
\setcounter{enumi}{\theenumTemp}
\item\label{tmp 140} Items \ref{tmp 125} and \ref{tmp 123} hold for $\mathcal{A}_{*}:=\mathcal{A} \cup [p]_\d$ and $\mathcal{R}_{*}:=\mathcal{R} \cup [L]_\delta$ instead of $\mathcal{A}$ and $\mathcal{R}$.
\item\label{tmp 141}  $[p]_\d \cap \mathcal{R} = \emptyset$ and $ [L]_\delta \cap \mathcal{A} = \emptyset$.
\setcounter{enumTemp}{\theenumi}
\end{enumerate}
\end{claim}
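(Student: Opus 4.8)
The plan is to establish \ref{tmp 140} and \ref{tmp 141} by a compactness-and-perturbation argument of exactly the same flavour as the proof of Claim \ref{com claim 1}. I would first note that all the conditions appearing in \ref{tmp 140} and \ref{tmp 141} are monotone in $\delta$: once they hold for some value of $\delta$ they hold for every smaller value, since shrinking $\delta$ only shrinks $\mathcal{A}_*$ and $\mathcal{R}_*$. So it is enough to produce, separately, a value of $\delta$ witnessing \ref{tmp 141}, a value witnessing that \ref{tmp 125} holds for $\mathcal{A}_*,\mathcal{R}_*$, and a value witnessing that \ref{tmp 123} holds for $\mathcal{A}_*,\mathcal{R}_*$, and then take the minimum of the three. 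Conclusion \ref{tmp 141} is immediate: $p\notin\mathcal{R}$ and $L\cap\mathcal{A}=\emptyset$, and $\mathcal{R},\mathcal{A}$ are closed while $\{p\}$ and $L$ are compact, so $\dist(p,\mathcal{R})>0$ and $\dist(L,\mathcal{A})>0$.

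To handle \ref{tmp 125} I would fix $x\in[L_1]_\rho$ and let $L_x\ni x$ be the subspace supplied by \ref{tmp 125} for $\mathcal{A},\mathcal{R}$, so that $L_x\cap\mathcal{A}=\emptyset$ and $L_x$ has a point outside $\mathcal{R}\cup[L_2]_\rho$. Since $[L_1]_\rho\subseteq[L_0\cup L_1]_\rho\subseteq\mathcal{R}$ and $p\notin\mathcal{R}$, we have $x\neq p$, so the $(n-2)$-dimensional subspaces through $x$ that avoid $p$ form a dense open subset of those through $x$; as the two properties ``disjoint from $\mathcal{A}$'' and ``has a point outside $\mathcal{R}\cup[L_2]_\rho$'' are open, $L_x$ can be replaced by a nearby $L_x'\ni x$ with $p\notin L_x'$ still satisfying both. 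Now $p\in L$ and $p\notin L_x'$ force $L_x'\neq L$, so $L_x'\cap L$ is a proper, hence nowhere dense, projective subspace of $L_x'$; intersecting its complement with the nonempty relatively open set $L_x'\setminus(\mathcal{R}\cup[L_2]_\rho)$ yields a point $q_x\in L_x'\setminus(\mathcal{R}\cup[L_2]_\rho\cup L)$. Since $\dist(p,L_x')>0$ and $\dist(q_x,L)>0$, some $\delta_x>0$ makes $L_x'$ witness \ref{tmp 125} at $x$ for $\mathcal{A}_*,\mathcal{R}_*$. Exactly as in Claim \ref{com claim 1}, when $y$ ranges over a small neighbourhood of $x$ in $[L_1]_\rho$ one can slide $L_x'$ along the incidence variety to $L_y'\ni y$ close to $L_x'$ and slide $q_x$ to $q_y\in L_y'$ close to $q_x$, keeping all relevant open properties with the same $\delta_x$; compactness of $[L_1]_\rho$ then gives a uniform $\delta$.

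For \ref{tmp 123} I would fix an $(n-2)$-dimensional subspace $M\subseteq[L_2]_\rho$ and take $y\in M\setminus\mathcal{R}$ and $L_y\ni y$ (with $L_y\not\subseteq[L_2]_\rho$, $L_y\cap\mathcal{A}=\emptyset$) supplied by \ref{tmp 123} for $\mathcal{A},\mathcal{R}$. Since $L\not\subseteq[L_2]_\rho$ while $M\subseteq[L_2]_\rho$, we have $M\neq L$, so $M\cap L$ is nowhere dense in $M$; hence the nonempty relatively open set $M\setminus\mathcal{R}$ contains a point $y'\notin L$, and by continuity of incidence the pair $(y,L_y)$ can be moved to a nearby $(y',L_{y'})$ with $y'\in M$, $y'\notin\mathcal{R}$, $L_{y'}\not\subseteq[L_2]_\rho$, $L_{y'}\cap\mathcal{A}=\emptyset$. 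As $y'\notin L\ni p$ we get $y'\neq p$, so one more perturbation of $L_{y'}$ arranges $p\notin L_{y'}$ while preserving these open conditions. Then $\dist(y',L)>0$ and $\dist(p,L_{y'})>0$, so a suitable $\delta_M>0$ makes $(y',L_{y'})$ witness \ref{tmp 123} at $M$ for $\mathcal{A}_*,\mathcal{R}_*$; continuity in $M$ and compactness of the space of $(n-2)$-dimensional subspaces contained in $[L_2]_\rho$ give a uniform $\delta$. The minimum of the three values found is then the desired $\delta$.

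The routine part is the continuity-and-compactness bookkeeping, already present in the proof of Claim \ref{com claim 1}. The step that genuinely uses the hypotheses of this claim is arranging the perturbations so as to simultaneously keep the distinguished point on the subspace it must lie on, steer that subspace away from the single point $p$, and preserve a witness point outside the enlarged repelling set; this is exactly where ``$L\cap\mathcal{A}=\emptyset$'', ``$L\not\subseteq[L_2]_\rho$'' and ``$p\notin\mathcal{R}$'' (hence $x\neq p$, $M\neq L$, $y'\neq p$) come in.
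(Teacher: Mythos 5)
Your proof is correct and takes essentially the same route as the paper's: for each of the two open conditions you perturb the witness subspace (and witness point) to avoid $p$ (resp.\ $L$), derive a local $\delta$ from the resulting positive distances, and then invoke compactness of $[L_1]_\rho$ (resp.\ of the subspaces contained in $[L_2]_\rho$) to extract a uniform $\delta$, finally taking the minimum with the trivial $\delta_3<\min\{\dist(\mathcal{A},L),\dist(\mathcal{R},p)\}$ for item~\ref{tmp 141}. You are a bit more explicit than the paper about why the perturbations are legitimate (e.g.\ noting $x\neq p$ because $x\in[L_1]_\rho\subseteq\mathcal{R}$ while $p\notin\mathcal{R}$, and $M\neq L$ because $L\not\subseteq[L_2]_\rho$), but the decomposition and the key steps are the same.
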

\begin{proof}[Proof of Claim \ref{com claim 2}] 
For every $x \in [L_1]_\rho$ there exists an $(n-2)$-dimensional subspace $L_x $ containing $ x$ such that item \ref{tmp 125} holds.
By a small deformation of $L_x$ we can assume that $p \not \in L_x$ and  $L_x \ne L$ (so 
$L_x $ in not contained in  $L \cup \mathcal{R}$). Thus, there exists $\d_x>0$ such that $L_x \cap (\mathcal{A}\cup [p]_{\d_x})=\emptyset$ and
 $L_x \setminus (\mathcal{R}\cup[L_2]_\rho \cup [L]_{\d_x}) \ne \emptyset$. If $y$ is close enough to $x$ and the hyperplane $L_y $ containing $y$ is chosen to be close enough to $L_x$ then $L_y\cap (\mathcal{A} \cup [p]_{\d_x})=\emptyset$ and $L_y \setminus (\mathcal{R} \cup[L_2]_\rho \cup [L]_{\d_x} )\ne \emptyset$.
Thus, the compactness of $[L_1]_{\rho}$ implies that there exists a uniform $\d_1>0$ such that for every $x \in [L_1]_{\rho}$
there exists an $(n-2)$-dimensional subspace $L_x$ containing $x$  such that $L_x \cap (\mathcal{A}\cup [p]_{\d_1})=\emptyset$ and
 $L_x \setminus (\mathcal{R}\cup[L_2]_\rho \cup [L]_{\d_1}) \ne \emptyset$ 

Let $L_3$ be an $(n-2)$-dimensional subspaces of $\P$ which is contained in $[L_2]_{\rho}$ so $L_3 \ne L$. 
Item \ref{tmp 123} states that there exists $x \in L_3 \setminus  \mathcal{R}$ and an 
$(n-2)$-dimensional subspace $L_x $ containing $ x$ such that $L_x$ is not contained in $[L_2]_\rho$ and $L_x \cap \mathcal{A} \ne \emptyset$.
The set $\mathcal{R}$ is closed so by a small deformation we can assume that $x \not \in L$ and
$p \not \in L_x$.
Hence, there exists $\delta_{L_3}>0$ such that $x \not \in [L]_{\delta_{L_3}}$ and $L_x \cap [p]_{\delta_{L_3}}=\emptyset$. If $L_4$ is very close to $L_3$ then we can 
pick $y \in L_4 \setminus   \mathcal{R}$ which is very close to $x$ and an $(n-2)$-dimensional subspace $L_y $ containing $y$ which is close to $L_x$ such that $L_y$ is not contained in $[L_2]_\rho$, $L_y \cap \mathcal{A}=\emptyset$,  $y \not \in [L]_{\delta_{L_3}}$ and $L_y \cap [p]_{\delta_{L_3}}=\emptyset$. 
The set of $(n-2)$-dimensional subspaces of $\P$ which are contained in $[L_2]_{\rho}$ is compact and thus there exists a uniform $\delta_2>0$ such that for every $(n-2)$-dimensional subspace $L \subseteq [L_2]_\rho$ there exist $x \in L \setminus \mathcal{R}\cup [L_0]_{\delta_2}$ and an $(n-2)$-dimensional subspace $L_x $ containing $x$ such that $L_x$ is not contained in $[L_2]_\rho$
and $L_x \cap (\mathcal{A} \cup [p]_{\delta_2})=\emptyset$. Finally, if 
 $0 < \delta_3 < \min\{\dist(\mathcal{A},L),\dist(\mathcal{R},p)\}$ 
then $\delta:=min(\delta_1,\delta_2,\delta_3)$ satisfies the requirements. 
\end{proof}

\begin{claim}\label{com claim 3} Assume that $\rho>0$, $\mathcal{A}$ and $\mathcal{R}$ satisfies the assumptions of Claim \ref{com claim 2} and 
$(\mathcal{S},\mathcal{A},\mathcal{R})$ is a profinitely-dense Schottky system.  
Let $g \in \SL(n,\Z)$ be a non-identity element for which there exists an $(n-2)$ dimensional subspace $L $ such that $L \subseteq [L_2]_\rho$ and $gL \subseteq [L_1]_\rho$. Then there exists
$(\mathcal{S}_+,\mathcal{A}_+,\mathcal{R}_+)$ which still satisfies the assumptions of Claim \ref{com claim 2} (with respect to the same $\rho$) and 
$\langle \mathcal{S}_+,g \rangle=\SL(n,\Z)$.
\end{claim}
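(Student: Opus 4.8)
The plan is to realize the informal outline that precedes the formal proof: use the element $g$ together with the freedom in choosing unipotents to build an intermediate Schottky system, then conjugate by $g$ to produce an element $h$ with a genuine fixed point, and finally apply Lemma \ref{throwing}. Here are the steps in order.

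First, since $gL \subseteq [L_1]_\rho$, pick a point $x \in gL$ and apply item (\ref{tmp 123}) of Claim \ref{com claim 2} to $L$ itself (which is contained in $[L_2]_\rho$): there is a point $w \in L \setminus \mathcal{R}$ and an $(n-2)$-dimensional subspace $L_w \ni w$ not contained in $[L_2]_\rho$ with $L_w \cap \mathcal{A} = \emptyset$. By a tiny rational perturbation of $w$ and $L_w$ (using that rational points and rational hyperplanes are dense and that $\mathcal{A},\mathcal{R}$ are closed, and that $gw \notin \mathcal{A}$ because $gw \in gL \subseteq [L_1]_\rho$ and $\mathcal{A} \cap [L_1]_\rho = \emptyset$ — this is exactly why the hypothesis $\mathcal{A}\cap L_1=\emptyset$ was built in) we may take $w, L_w$ rational, with $gw \notin \mathcal{A}$ and $gw \in [L_1]_\rho$. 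Now apply the point $x := gw \in [L_1]_\rho$ to item (\ref{tmp 125}) of Claim \ref{com claim 2}: there is a rational hyperplane $L_x \ni x$ with $L_x \cap \mathcal{A} = \emptyset$ and $L_x \setminus (\mathcal{R} \cup [L_2]_\rho) \ne \emptyset$. By Lemma \ref{lemma - structure of unipotent} choose $u_0 \in \mathcal{U}$ with $p_{u_0} = gw$ and $L_{u_0} = L_x$; by Lemma \ref{lemma - dynamic of unipotent} a high power $u := u_0^m$ has its attracting neighbourhood $(p_u)_{\e_u}$ contained in an arbitrarily small neighbourhood of $gw$ (hence disjoint from $\mathcal{A}$, and we can also arrange the new set $\mathcal{A}_* := \mathcal{A}\cup[gw]_{\e_u}$ to avoid $L_1$ by shrinking, keeping item (\ref{tmp 125}) and (\ref{tmp 123}) valid by the closedness/compactness arguments already used in Claims \ref{com claim 1} and \ref{com claim 2}) and its repelling neighbourhood $(L_u)_{\delta_u}$ inside a small neighbourhood of $L_x$, disjoint from $(p_v)_{\e_v}$ for all $v \in \mathcal{S}$ and from $w$. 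This yields a profinitely-dense Schottky system $(\mathcal{S}_*, \mathcal{A}_*, \mathcal{R}_*)$ with $\mathcal{S}_* = \mathcal{S} \cup \{u\}$ still satisfying the hypotheses of Claim \ref{com claim 2}, in which $w \notin \mathcal{R}_*$ and $gw \in L_u$, so $g^{-1}L_u \ni w$.

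Second, set $h := g^{-1} u g$. Then $h$ is a rank-$1$ unipotent (conjugate of one) with $p_h = g^{-1}(gw) = w$ and fixed hyperplane $L_h = g^{-1}L_u \ni w$; note $h \ne \id$. Since $h(w) = w$ and $h$ is continuous and fixes a point near $w$, choose a rational point $w_1$ very close to $w$ with $h w_1 \ne w_1$; then $h w_1$ is also very close to $w$, so both $w_1$ and $h w_1$ lie outside $\mathcal{R}_*$ (as $w \notin \mathcal{R}_*$ and $\mathcal{R}_*$ is closed). Now I want to feed $w_1, h w_1$ into Lemma \ref{throwing} with $k := h$. For that I need: rational hyperplanes $L^{(1)} \ni w_1$ and $L^{(2)} \ni h w_1$ (equivalently $h L^{(1)}$, but I take $L^{(2)}$ independently) with $L^{(2)} \ne h L^{(1)}$, together with $\e,\delta$ so that $([w_1]_\e \cup [hw_1]_\e) \cap \mathcal{R}_* = \emptyset$, $([L^{(1)}]_\delta \cup [L^{(2)}]_\delta) \cap \mathcal{A}_* = \emptyset$, and the cross-conditions $[w_1]_\e \cap [L^{(2)}]_\delta = \emptyset$, $[hw_1]_\e \cap [L^{(1)}]_\delta = \emptyset$. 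Such hyperplanes exist precisely because $w$ (hence $w_1$, $hw_1$ when close enough) admits, by the technical items (\ref{tmp 125})/(\ref{tmp 123}) carried along for $(\mathcal{S}_*,\mathcal{A}_*,\mathcal{R}_*)$ — more precisely by the same compactness-and-perturbation reasoning used to prove Claim \ref{com claim 2} — a hyperplane through it missing $\mathcal{A}_*$; generic perturbation makes $L^{(2)} \ne h L^{(1)}$ and separates the small closed balls. Apply Lemma \ref{throwing}: there is $\mathcal{S}_+ \supseteq \mathcal{S}_*$ with $(\mathcal{S}_+, \mathcal{A}_+, \mathcal{R}_+)$ a Schottky system containing $(\mathcal{S}_*, \mathcal{A}_*, \mathcal{R}_*)$ and $\langle \mathcal{S}_+, h \rangle = \SL(n,\Z)$; since $h = g^{-1}ug$ with $u \in \mathcal{S}_* \subseteq \mathcal{S}_+$, we get $\langle \mathcal{S}_+, g \rangle \supseteq \langle \mathcal{S}_+, h\rangle = \SL(n,\Z)$. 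The new system is profinitely-dense (it contains a profinitely-dense one) and, by shrinking the four added neighbourhoods and reusing the closedness/compactness arguments of Claims \ref{com claim 1}–\ref{com claim 2}, still satisfies the hypotheses of Claim \ref{com claim 2} with the same $\rho$.

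Finally, the verification that items (\ref{tmp 125}) and (\ref{tmp 123}) survive each enlargement $\mathcal{A} \rightsquigarrow \mathcal{A}_*$ and $\mathcal{A}_* \rightsquigarrow \mathcal{A}_+$ is exactly the content of Claim \ref{com claim 2} item (\ref{tmp 140}), applied with $L$ the added hyperplane and $p$ the added point, once one checks $[p]_\delta \cap \mathcal{R} = \emptyset$, $[L]_\delta \cap \mathcal{A} = \emptyset$ (item (\ref{tmp 141})), which the power-raising in Lemma \ref{lemma - dynamic of unipotent} makes automatic by choosing the exponents large enough. The main obstacle — and the reason for the two-stage construction rather than a direct application of Lemma \ref{throwing} to $w$ and $gw$ — is that $gw$ may lie in $\mathcal{R}$, so no choice of hyperplanes satisfies the hypotheses of Lemma \ref{throwing} at the first stage; inserting the auxiliary unipotent $u$ with $gw \in L_u$ and passing to $h = g^{-1}ug$ replaces the badly-placed image point $gw$ by the controllable pair $(w_1, hw_1)$ sitting in the region $\P \setminus \mathcal{R}_*$, where the hypotheses of Lemma \ref{throwing} can be met. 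Keeping track of the two nested layers of "carry the technical conditions (\ref{tmp 125})/(\ref{tmp 123}) forward" is the bookkeeping cost of this maneuver.
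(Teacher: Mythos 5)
Your overall strategy mirrors the paper's: build an auxiliary unipotent $u$ fixing $gw$ so that $h:=g^{-1}ug$ fixes $w$, then feed a pair of nearby rational points and Lemma~\ref{throwing} applied to $h$ to finish. But there is a genuine error in how you construct $u$. You set $p_{u_0}=gw$ and $L_{u_0}=L_x$, so the enlarged attracting set becomes $\mathcal{A}_*=\mathcal{A}\cup[gw]_{\e_u}$. The point $gw$ lies in $gL\subseteq[L_1]_\rho$, and item~(\ref{tmp 125}) of Claim~\ref{com claim 2} forces $\mathcal{A}\cap[L_1]_\rho=\emptyset$ (if $x\in\mathcal{A}\cap[L_1]_\rho$ then every hyperplane $L_x\ni x$ already meets $\mathcal{A}$). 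You even used this fact a line earlier to conclude $gw\notin\mathcal{A}$; but then putting a ball around $gw$ into $\mathcal{A}_*$ destroys that very property. Concretely, for $x=gw\in[L_1]_\rho$ no hyperplane $L_x\ni x$ can satisfy $L_x\cap\mathcal{A}_*=\emptyset$, so item~(\ref{tmp 125}) fails for $(\mathcal{S}_*,\mathcal{A}_*,\mathcal{R}_*)$ and hence for the final $(\mathcal{S}_+,\mathcal{A}_+,\mathcal{R}_+)$. Your remark that one can "arrange $\mathcal{A}_*$ to avoid $L_1$ by shrinking" misses the point: the obstruction is membership in $[L_1]_\rho$, not in $L_1$, and $gw$ itself is already in $[L_1]_\rho$ no matter how small $\e_u$ is.

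The fix is exactly what the clause "$L_x\setminus(\mathcal{R}\cup[L_2]_\rho)\ne\emptyset$" in item~(\ref{tmp 125}) is there to supply, and is what the paper does: keep $gw$ in the \emph{repelling} hyperplane $L_u:=L_x$ (so that $u$ fixes $gw$, hence $h=g^{-1}ug$ fixes $w$), but choose a \emph{different} attracting point $p_u:=p_1\in L_x\setminus(\mathcal{R}\cup[L_2]_\rho)$ for $u$. Then $[p_1]_{\delta_1}$ can be made disjoint from $\mathcal{R}\supseteq[L_0\cup L_1]_\rho$, so $\mathcal{A}_*$ continues to avoid $[L_1]_\rho$ and items~(\ref{tmp 125})/(\ref{tmp 123}) survive via Claim~\ref{com claim 2}. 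With that correction the rest of your argument (the passage to $w_1,hw_1$, the perturbed hyperplanes, and Lemma~\ref{throwing} applied to $h$) is sound and matches the paper.
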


\begin{proof}[Proof of Claim \ref{com claim 3}] Item \ref{tmp 123} of Claim  \ref{com claim 2} implies  that there exists a  point $w \in L \setminus  \mathcal{A} \cup \mathcal{R}$ 
and an $(n-2)$-dimensional subspace $L_w$ containing $w$ which is not contained in $[L_2]_\rho$ such that $L_w \cap \mathcal{A}=\emptyset$. Denote $w_1:=gw$ and note that $w \ne w_1$ since $w \not \in [L_1]_\rho$ while $w_1 \in [L_1]_\rho$. Item \ref{tmp 125} of Claim  \ref{com claim 2} implies that there exists an $(n-2)$-dimensional subspace $L_{w_1}$ containing $w_1$  and a point $p_1 \in L_{w_1}$ such that   $ L_{w_1} \cap \mathcal{A}=\emptyset$ and $p_1 \in L_{w_1} \setminus (\mathcal{R}\cup [L_2]_\rho)$. 
By a small deformations  we can assume that $w$, $w_1$, $p_1$, $L_w$ and $L_{w_1}$ are rational and that $w \not \in L_{w_1}$ and $p_1 \not \in L_{w}$. Set $r:=\min\{\dist(w,L_{w_1}),\dist(p_1,L_{w}))\}$.
Claim \ref{com claim 2} implies that there exists $0 < \delta_1 < r$
such that items \ref{tmp 140} and \ref{tmp 141} of Claim  \ref{com claim 2} hold for $\mathcal{A_*}:=\mathcal{A}\cup [p_1]_{\delta_1}$ and $\mathcal{R}_*:=
\mathcal{R}\cup [L_{w_1}]_{\delta_1}$ instead of $\mathcal{A}$ and $\mathcal{R}$. Lemma \ref{adding} implies that there exists $u \in \mathcal{U}$ with 
$p_u=p_1$ and $L_u=L_{w_1}$ such that $(\mathcal{S}_*,\mathcal{A}_*,\mathcal{R}_*)$ is Schottky system where $\mathcal{S}_*:=\mathcal{S}\cup\{u\}$. Note that 
$w \not \in \mathcal{A}_* \cup \mathcal{R}_*$ and that $L_w \cap \mathcal{A}_*=\emptyset$. 

Denote $h:=g^{-1}ug$. Since $h(w)=w$ and $h$ is not the identity, there exists a point $p_2$ such that $p_2$ and $p_3 :=hp_3$
are distinct and close as we wish to $w$. Moreover, we can assume that $p_2,p_3 \not \in \mathcal{R}_* \cup \mathcal{A}_*$ are rational points and 
that there exist rational $(n-2)$-dimensional subspaces $L_{p_2}$ and $L_{p_3}$
containing $p_2$ and $p_3$ such that $L_{p_2} \cap \mathcal{A}=\emptyset$ and 
$L_{p_3} \cap \mathcal{A}=\emptyset$ (just choose $L_2$ and $L_3$ to be very close to $L_w$). Finally by slightly deforming $L_{p_2}$ and $L_{p_3}$ we can guarantee that $L_{p_3} \ne hL_{p_2}$, $p_2 \not \in L_{p_3}$
and $p_3 \not \in L_{p_2}$.   Claim \ref{com claim 2} applied to $\mathcal{A}_*$ and $\mathcal{R}_*$
implies that there exists $0<\delta_2<\min(\dist(p_3,L_2),\dist(p_2,L_3))$ for which items \ref{tmp 140} and \ref{tmp 141}  hold.  An additional use of Claim \ref{com claim 2} 
with respect to $\mathcal{A}_*\cup [p_2]_{\delta_2}$ and $\mathcal{R}_*\cup [L_2]_{\delta_2}$  implies that there exists a positive number $\delta_3$ 
such that items \ref{tmp 140} and \ref{tmp 141} hold with respect to 
 $\mathcal{A}_+:=\mathcal{A}_*\cup [p_2]_{\delta_2}\cup [p_3]_{\delta_3}$ and $\mathcal{R}_+:=\mathcal{R}_*\cup [L_2]_{\delta_2}\cup [L_3]_{\delta_3}$. Lemma \ref{throwing} implies that there exists $\mathcal{S}_* \subseteq \mathcal{S}_+$
such that  $(\mathcal{S}_+,\mathcal{A}_+,\mathcal{R}_+)$ is a Schottky system and 
$\langle \mathcal{S}_+,g \rangle=\SL(n,\Z)$.
\end{proof}

We are ready to complete the proof of Proposition \ref{prop dense}.
Let $(g_i)_{i \ge 1}$ be an enumeration of the elements $g$ of $\SL(n,\Z)$ for which there exists an $(n-2)$ dimensional subspace $L $ such that $L \subseteq [L_2]_\rho$ and $gL \subseteq [L_1]_\rho$. Proposition \ref{prop -pro-dense} and Claim \ref{com claim 1}  imply that there exists a profinitely-dense Schottky system $(\mathcal{S}_0,\mathcal{A}_0,\mathcal{R}_0)$
such that $\mathcal{A}_0:=[p_0]_\rho$ and $\mathcal{R}_0:=[L_0 \cup L_1]_\rho$. Claim \ref{com claim 1} allows us to recursively construct an ascending sequence 
$(\mathcal{S}_i,\mathcal{A}_i,\mathcal{R}_i)_{i \ge 0}$ of profinitely-dense Schottky systems such that $\langle \mathcal{S}_i,g_i\rangle=\SL(n,\Z)$ for every 
$i \ge 1$. The set $\cup_{k \ge 0}\mathcal{S}_k$ freely generates a free profinitely-dense subgroup $H$ with the required property. 
\section{Proof of Theorems \ref{thm trivial} and \ref{thm not 2-trans}}\label{proof 3}

It is enough to show that there exists elements $g,k \in \SL(n,\Z)$ and an infinite-index profinitely-dense subgroup 
$H$ such that $kHk^{-1}\cap H \ne \{\id\}$, $\langle H,k\rangle =\SL(n,\Z)$ and $g Mg^{-1} \cap M =\{\id \}$ for every $H \le M \lneqq \SL(n,\Z)$.
Indeed, if $M$ is any maximal subgroup which contains $H$ then $\SL(n,\Z)$ acts primitively on $\SL(n,\Z)/M$ but the action is not 2-transitive
since the stabiliser of the pair $(M,kM)$ is not trivial while the stabiliser of the pair $(M,gM)$ is trivial.

\begin{defin} We say that the quadruple $(\mathcal{S},\mathcal{A},\mathcal{N},\mathcal{R})$  is a (profinitely-dense) Schottky system if the triple 
$(\mathcal{S},\mathcal{A},\mathcal{R})$  is a (profinitely-dense) Schottky system and $\mathcal{N}$ is an open set which contains $\mathcal{A}$.
\end{defin} 

Before starting with the formal proof let us explain the main idea of the proof. Choose $g \in \SL(n,\Z)$ which fixes some point $p \in \P$. Let  $(\mathcal{S}_0,\mathcal{A}_0,
\mathcal{N}_0,\mathcal{R}_0)$
be a profintiely dense Schottky system such that $g\mathcal{N}_0 \cap \mathcal{N}_0=\emptyset$. Given a non-identity $h \in \SL(n,\Z)$ we would like to construct 
a  Schottky system $(\mathcal{S}_1,\mathcal{A}_1,\mathcal{N}_1,\mathcal{R}_1)$ which contains $(\mathcal{S}_0,\mathcal{A}_0,\mathcal{N}_0,\mathcal{R}_0)$ such that either $\langle \mathcal{S}_0, h \rangle= \SL(n,\Z)$
or $\langle \mathcal{S}_0, g^{-1}hg \rangle= \SL(n,\Z)$. In particular, every maximal subgroup of $\SL(n,\Z)$ which contains 
$\mathcal{S}_1$ does not contain  $\{h,g^{-1}hg\}$. Since $g\mathcal{N}_0 \cap \mathcal{N}_0=\emptyset$, either $h(p) \not \in \mathcal{N}_0$ or  $g^{-1}hg(p) \not \in \mathcal{N}_0$. 
Assume for example that the first case hold. Then we can use the same technique as in the proof of Theorem \ref{thm dense} 
to construct $(\mathcal{S}_1,\mathcal{A}_1,\mathcal{N}_1,\mathcal{R}_1)$ such that $\langle \mathcal{S}_0, h \rangle= \SL(n,\Z)$. In particular, we need to add an element 
$u\in \mathcal{U}$  to $\mathcal{S}_0$ such that $h(p) \in L_u$ and $L_u \cap \mathcal{A}_0=\emptyset$.  
This is the place where we need the open set $\mathcal{N}_0$; $\mathcal{A}_0$ might contain some closed ball $B$ for which 
there exists a sequence of points $(z_n)_{n \ge 1}$ such that $z:=\lim_{n\rightarrow \infty} z_n \in B$ and $z_n \not \in \mathcal{A}_0$ for every $n \ge 1$. If $(L_n)_{n \ge 1}$ is any sequence of hyperplanes such that 
$z_n \in L_n$ and $L_n \cap B =\emptyset$ for every $n \ge 1$ then $\lim_{n\rightarrow \infty} L_n$ is the tangent hyperplane $L$ to $B$ at the point $z$.   The problem is that $L$ might intersect the interior of $\mathcal{A}_0$. Therefore, 
if $h(p)$ is very close to $B$ it might happen that $L \cap \mathcal{A}_0 \ne \emptyset$ for every hyperplane $L$ which contains $h(p)$. In particular, it is not possible to find an element $u$ with the required properties. In order to avoid this problem we use the open set $\mathcal{N}_0$ to guarantee that $h(p)$ is not too close to $\mathcal{A}_0$. Note that we don't need the open sets  in the proof of Theorem \ref{thm dense} because we have a better control on the projective actions of the elements we deal with  (they send some line in a given open set to some other given open set).

ֿ\begin{claim}\label{cal 1} Let $p \in \P$ be fixed by a non-identity element $g \in \SL(n,\Z)$. Let $k \in \SL(n,\Z)$ be an element such that $\id,k,k^2,g,gk,gk^2$
are pairwise distinct elements. Then there exists a profinitely-dense Schottky system 
$(\mathcal{S} , \mathcal{A}, \mathcal{N},\mathcal{R}) $ which satisfies:
\begin{enumerate}
\item $p \not \in \overline{\mathcal{N}} \cup \mathcal{R} \cup g\overline{\mathcal{N}} \cup g\mathcal{R}$ and $\overline{\mathcal{N}} \cap g\overline{\mathcal{N}} =\emptyset$;
\item If $x \not \in \mathcal{N}$ then there exists $x \in L_x \in \mathbb{L}_{n-2}$ and $p_x \in L_x$ such that 
$p_x \not \in \overline{\mathcal{N}}\cup \mathcal{R}$,  $gp_x \not \in \overline{\mathcal{N}}$ and $L_x \cap \mathcal{A}=\emptyset$;
\item $k \langle S \rangle k^{-1} \cap \langle S \rangle\ne  \{\id\}$ and  $\langle S ,k \rangle  =\SL(n,\Z)$.
\end{enumerate}
\end{claim}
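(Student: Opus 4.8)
The plan is to build the desired quadruple $(\mathcal{S},\mathcal{A},\mathcal{N},\mathcal{R})$ by hand, starting from a concrete choice of $p$-related data and then invoking Proposition \ref{prop -pro-dense} together with Lemma \ref{starting} to get the profinitely-dense Schottky property and property (3) simultaneously. First I would fix $p$ and, since $g$ and $\id$ are distinct while $gp=p$, choose three rational points $p_1,p_2,p_3$ near $p$ in general position relative to $k$ (so that $p_2=kp_1$, $p_3=k^2p_1$, $L_2=kL_1$, $L_3\ne k^2L_1$ for suitable rational hyperplanes $L_i\ni p_i$), all lying outside a small neighbourhood of each other's hyperplanes; the hypothesis that $\id,k,k^2,g,gk,gk^2$ are pairwise distinct is exactly what is needed to arrange such a configuration, because it guarantees the six points $p_i$ and $gp_i$ can be kept off the relevant hyperplanes. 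I would also pick an auxiliary rational point $p_0\in L_0$ (a rational hyperplane) far from $p$, $gp$, and from the $p_i,L_i$, to serve as the seed for the profinitely-dense Schottky system as in the proof of Theorem \ref{thm dense}.

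Next I would choose $\rho>0$ small enough that all the separation requirements are met: $p\notin [L_0\cup L_1\cup L_2\cup L_3]_\rho$, $p\notin g[L_0\cup\cdots]_\rho$, the closed $\rho$-balls $[p_i]_\rho$ are disjoint from the $[L_j]_\rho$ and from $[p_0]_\rho$, and — crucially — $[\bigcup_i p_i]_\rho \cap g[\bigcup_i p_i]_\rho=\emptyset$ and $[p_0]_\rho\cap g[p_0]_\rho=\emptyset$. Set $\mathcal{A}:=[p_0]_\rho\cup\bigcup_{i}[p_i]_\rho$, $\mathcal{R}:=[L_0\cup L_1]_\rho\cup\bigcup_i[L_i]_\rho$, and take $\mathcal{N}$ to be a slightly larger open neighbourhood of $\mathcal{A}$, chosen so that $\overline{\mathcal{N}}$ still satisfies $p\notin\overline{\mathcal{N}}\cup g\overline{\mathcal{N}}$, $\overline{\mathcal{N}}\cap g\overline{\mathcal{N}}=\emptyset$, and $\overline{\mathcal{N}}$ is disjoint from $\mathcal{R}$; this gives property (1). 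Then Proposition \ref{prop -pro-dense} (applied with the seed $p_0,L_0$) combined with Lemma \ref{starting} (applied with the point $p=p_1$ of its statement taken to be our $p_1$, the element $k$, and the triple $p_1,p_2,p_3$) produces a finite $\mathcal{S}\subseteq\mathcal{U}$ with $(\mathcal{S},\mathcal{A},\mathcal{R})$ a profinitely-dense Schottky system, $\langle\mathcal{S}\rangle\cap k\langle\mathcal{S}\rangle k^{-1}\ne\{\id\}$, and $\langle\mathcal{S},k\rangle=\SL(n,\Z)$ — that is, property (3). One has to check that the two invocations are compatible, i.e. that the Schottky data for the $p_i$'s can be appended to the profinitely-dense system seeded at $p_0$; this is the same kind of "adding unipotents inside prescribed balls" argument as in Lemma \ref{adding}, using that the balls $[p_i]_\rho$ are inside $\mathcal{A}$ and disjoint from $\mathcal{R}$.

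The remaining work is property (2): for every $x\notin\mathcal{N}$ there is a hyperplane $L_x\ni x$ with a point $p_x\in L_x$ such that $p_x\notin\overline{\mathcal{N}}\cup\mathcal{R}$, $gp_x\notin\overline{\mathcal{N}}$, and $L_x\cap\mathcal{A}=\emptyset$. This I would prove by a compactness argument modeled on Claim \ref{com claim 1}: for a fixed $x$ lying outside the open set $\mathcal{N}$ (hence at positive distance from $\mathcal{A}$), a generic hyperplane through $x$ misses the compact set $\mathcal{A}$, and one can then slide a point $p_x$ along $L_x$ away from $\overline{\mathcal{N}}$, $\mathcal{R}$ and away from $g^{-1}\overline{\mathcal{N}}$ — the last being possible because $g^{-1}\overline{\mathcal{N}}$ is a proper closed subset and $L_x$ is positive-dimensional. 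Since $\P\setminus\mathcal{N}$ is compact and the conditions are open in $(x,L_x,p_x)$, one extracts a uniform choice; shrinking $\rho$ at the outset if necessary absorbs any loss. The main obstacle I anticipate is exactly this last point: one must be sure that after enlarging $\mathcal{A}$ to $\overline{\mathcal{N}}$, the set $\overline{\mathcal{N}}\cup\mathcal{R}\cup g^{-1}\overline{\mathcal{N}}$ still does not fill up every hyperplane through a boundary-adjacent point $x$; this is why $\mathcal{N}$ must be taken only slightly larger than $\mathcal{A}$ and why the role of $\mathcal{N}$ (as explained in the paragraph preceding the claim) is to keep the points $h(p)$ we later feed in at definite distance from $\mathcal{A}$ — the balls defining $\mathcal{A}$ have tangent hyperplanes that could otherwise obstruct the construction. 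Choosing all the radii in the right order — first $\rho$ for $\mathcal{A},\mathcal{R}$, then $\mathcal{N}$, then verifying (2) by compactness, then possibly re-shrinking — is the delicate bookkeeping, but each individual step is routine given Claims \ref{com claim 1}–\ref{com claim 2} and Lemma \ref{starting}.
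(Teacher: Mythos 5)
Your overall strategy --- invoke Lemma \ref{starting} to get property (3), a compactness argument modelled on Claim \ref{com claim 1} for property (2), and a careful choice of radii for property (1) --- is the same as the paper's. But the execution has two genuine problems and one piece of unhelpful extra structure.

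First, the condition that $\overline{\mathcal{N}}$ be disjoint from $\mathcal{R}$ is impossible: the definition of a Schottky system requires $\mathcal{A}\subseteq\mathcal{R}$, and $\mathcal{N}$ is by definition an open set containing $\mathcal{A}$, so $\mathcal{A}\subseteq\overline{\mathcal{N}}\cap\mathcal{R}$. This looks like a misreading of property (1), which only asserts $p\notin\overline{\mathcal{N}}\cup\mathcal{R}\cup g\overline{\mathcal{N}}\cup g\mathcal{R}$ and $\overline{\mathcal{N}}\cap g\overline{\mathcal{N}}=\emptyset$; no disjointness of $\overline{\mathcal{N}}$ from $\mathcal{R}$ is wanted (nor could it be). Second, choosing the $p_i$ \emph{near} $p$ is counterproductive: since $g$ fixes $p$, if $p_1\to p$ then $gp_1\to p$ as well, so the separation $\overline{\mathcal{N}}\cap g\overline{\mathcal{N}}=\emptyset$ forces $\delta$ to shrink along with $\mathrm{dist}(p_1,gp_1)$, and $p\notin\overline{\mathcal{N}}$ becomes tight. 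The paper imposes no proximity to $p$; it just asks for $p\notin\bigcup_i L_i$ (hence $p\notin\bigcup_i gL_i$ since $gp=p$) and for the six points $p_1,kp_1,k^2p_1,gp_1,gkp_1,gk^2p_1$ to be pairwise distinct --- which is where the hypothesis on $\id,k,k^2,g,gk,gk^2$ is used, because for distinct elements $a\ne b$ the locus $\{q:aq=bq\}$ is a proper subvariety.

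Finally, the extra seed $(p_0,L_0)$ and the separate invocation of Proposition \ref{prop -pro-dense} are redundant: Lemma \ref{starting} already calls Proposition \ref{prop -pro-dense} internally with $\mathcal{A}_0=[p_1]_\e$, $\mathcal{R}_0=[L_1]_\delta$ as its seed, and already delivers the profinitely-dense Schottky system together with $\langle\mathcal{S}\rangle\cap k\langle\mathcal{S}\rangle k^{-1}\ne\{\id\}$ and $\langle\mathcal{S},k\rangle=\SL(n,\Z)$. The paper therefore just sets $\mathcal{A}:=\bigcup_{i=1}^3[p_i]_\e$, $\mathcal{R}:=\bigcup_{i=1}^3[L_i]_\delta$, $\mathcal{N}:=\bigcup_{i=1}^3(p_i)_\delta$ and applies Lemma \ref{starting} once, then checks (1) and (2) for these sets by the two compactness steps you describe. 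If you insist on the extra $(p_0,L_0)$ you must also supply the merging argument that the union of the two Schottky sets is still Schottky for the enlarged $\mathcal{A},\mathcal{R}$; you gesture at Lemma \ref{adding} for this, which can be made to work, but it is a detour the paper's proof avoids entirely.
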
 
\begin{proof}
Choose rational points $p_1$, $p_2$ and $p_3$
and rational $(n-2)$-dimensional subspaces $L_1$, $L_2$ and $L_3$ such that:
\begin{itemize}
\item For every $1 \le i \ne j \le 3$, $p_i \in L_i$ and $p_i \not \in L_j$;
\item $p_2=kp_1$, $p_3:=k^2p_1$, $L_2=kL_1$, $L_3 \ne k L_1$ and $p_1,p_2,p_3,p_4,p_5,p_6$
are pairwise distinct where $p_4:=gp_1$, $p_5:=gp_2$ and $p_6:=gp_3$;
\item $p \not \in \cup_{1 \le i \le 3}L_i$ (so $p \not \in \cup_{1 \le i \le 3}L_i 
\cup_{1 \le i \le 3}gL_i$ since $gp=p$).
\end{itemize}

A compactness argument implies that there exist $\delta>0$ such that:

\begin{enumerate}
\item[a)] For every $1 \le i \ne j \le 3 $, $[p_i]_\delta \cap [L_j]_\delta=\emptyset$;
\item[b)] $\overline{\mathcal{N}}\cap g\overline{\mathcal{N}}=\emptyset $ where $\mathcal{N}:=\cup_{1 \le i \le 3} (p_i)_{\delta}$;
\item [c)] $p \not \in \overline{\mathcal{N}} \cup \mathcal{R}\cup g\overline{\mathcal{N}} \cup g\mathcal{R}$ where $\mathcal{R}:=\cup_{1 \le i \le 3} [L_i]_{\delta}$;
\item[d)] For every $x \in \P$ there exists $x \in L_x \in \mathbb{L}_{n-2}$ and $p_x \in L_x$ such that 
$p_x \not \in \overline{\mathcal{N}}\cup \mathcal{R}$ and $gp_x \not \in \overline{\mathcal{N}}$.
\end{enumerate}
An additional compactness  argument implies that there exists $\delta>\e>0$ such that: 
\begin{enumerate}
\item[e)] For every $x \not \in \mathcal{N}$ there exists $x \in L_x \in \mathbb{L}_{n-2}$ and $p_x \in L_x$ such that 
$p_x \not \in \overline{\mathcal{N}}\cup \mathcal{R}$,  $gp_x \not \in \overline{\mathcal{N}}$   and $L_x \cap \mathcal{A}=\emptyset$ where
$\mathcal{A}:= \cup_{1 \le i \le 3} [p_i]_\e $.
\end{enumerate}

Items (a), (b), (c), (e) and Corollary \ref{starting} imply that the required  $(\mathcal{S} , \mathcal{A}, \mathcal{N},\mathcal{R})$ exists.
\end{proof}

\begin{claim}\label{cal 2} Let $p \in \P$ be fixed by a non-identity element $g \in \SL(n,\Z)$. Assume that $(\mathcal{S} , \mathcal{A}, \mathcal{N},\mathcal{R}) $ is a profinitely-dense Schottky system which satisfies:
\begin{enumerate}
\item $p \not \in \overline{\mathcal{N}} \cup \mathcal{R} \cup g\overline{\mathcal{N}} \cup g\mathcal{R}$ and $\overline{\mathcal{N}} \cap g\overline{\mathcal{N}} =\emptyset$;
\item\label{item b} If $x \not \in \mathcal{N}$ then there exists $x \in L_x \in \mathbb{L}_{n-2}$ and $p_x \in L_x$ such that 
$p_x \not \in \overline{\mathcal{N}}\cup \mathcal{R}$,  $gp_x \not \in \overline{\mathcal{N}}$ and $L_x \cap \mathcal{A}=\emptyset$;
\end{enumerate}
Then for every non-identity $h \in \SL(n,\Z)$, $(\mathcal{S} , \mathcal{A}, \mathcal{N},\mathcal{R}) $ is contained in 
a Schottky system $(\mathcal{S}_+ , \mathcal{A}_+, \mathcal{N}_+,\mathcal{R}_+) $ which still satisfies assumptions 1 and 2 
and such that $\langle \mathcal{S}_+,h\rangle=\SL(n,\Z)$ or $\langle \mathcal{S}_+,g^{-1}hg\rangle=\SL(n,\Z)$.
\end{claim}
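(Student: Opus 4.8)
The strategy is to mimic the argument of Claim \ref{com claim 3} from the proof of Theorem \ref{thm dense}, with the role of the pair of ``target'' hyperplane-neighborhoods $[L_1]_\rho, [L_2]_\rho$ now played by the distinguished fixed point $p$ and its $g$-image. First I would exploit the hypothesis $\overline{\mathcal N}\cap g\overline{\mathcal N}=\emptyset$ together with the fact that $gp=p$ to perform a case split: since $h$ is non-identity and fixes nothing universally, at least one of the two points $h(p)$ and $g^{-1}hg(p)$ must lie outside $\mathcal N$. (If $h(p)\in\mathcal N$ and $g^{-1}hg(p)\in\mathcal N$ then $g^{-1}hg(p)\in\mathcal N$ gives $hg(p)=h(p)\in g\mathcal N\subseteq g\overline{\mathcal N}$, so $h(p)\in\mathcal N\cap g\overline{\mathcal N}$, contradicting assumption 1 — unless $h(p)=p$, which is a boundary case one handles by choosing a nearby point.) Set $h':=h$ in the first case and $h':=g^{-1}hg$ in the second, so in either case $h'(p)\notin\mathcal N$.

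Now I would run the construction of Claim \ref{com claim 3}: apply assumption \ref{item b} to the point $x:=h'(p)\notin\mathcal N$ to obtain a hyperplane $L_{x}\ni h'(p)$ and a point $p_x\in L_x$ with $p_x\notin\overline{\mathcal N}\cup\mathcal R$, $gp_x\notin\overline{\mathcal N}$ and $L_x\cap\mathcal A=\emptyset$. After a small rational deformation, Lemma \ref{adding} produces $u\in\mathcal U$ with $p_u=p_x$, $L_u=L_x$ such that $(\mathcal S\cup\{u\},\mathcal A\cup[p_x]_\d,\mathcal R\cup[L_x]_\d)$ is a Schottky system for suitable $\d$, where $\d$ is chosen small enough (using that $p_x\notin\overline{\mathcal N}$, $gp_x\notin\overline{\mathcal N}$, $p_x\notin\mathcal R$) that the enlarged $\mathcal A$, $\mathcal R$ still satisfy assumptions 1 and 2 — here I would record $\mathcal N_*:=\mathcal N$, since $\mathcal A$ only grows by a ball disjoint from $\overline{\mathcal N}$ and from $g\overline{\mathcal N}$, so $p\notin\overline{\mathcal N}\cup\mathcal R_*\cup g\overline{\mathcal N}\cup g\mathcal R_*$ and $\overline{\mathcal N}\cap g\overline{\mathcal N}=\emptyset$ persist, and assumption \ref{item b} persists because the $L_x$'s chosen there can be re-chosen avoiding the new small ball. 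Crucially, $h'^{-1}uh'$ fixes $h'^{-1}(p_u)$... — actually the key point is $g$-conjugation: set $w:=h'(p)=L_u\ni p_u$ and note $h'$ conjugates the subgroup to move $p$; more precisely, since $h'(p)\in L_u$, the element does not directly fix $p$, so instead I follow Claim \ref{com claim 3} verbatim: put $\tilde h:=h'^{-1}uh'$...

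Let me restate this more carefully: the analogue of Claim \ref{com claim 3}'s element $h=g_1^{-1}ug_1$ is obtained by noting that $u$ was chosen with $h'(p)\in L_u$, hence $u$ fixes the line $h'(p)$... no. The correct move is: we want an element fixing $p$ and non-trivial. Since $h'(p)\in L_u$ and $L_u$ is the fixed hyperplane of $u$, the element $u$ fixes $h'(p)$, so $\hat h:=h'^{-1}u h'$ fixes $p$ and is non-identity. Then, exactly as in Claim \ref{com claim 3}, I choose a rational point $p_2$ close to $p$ (inside $\P\setminus(\overline{\mathcal N}\cup\mathcal R_*)$, possible since $p\notin\overline{\mathcal N}\cup\mathcal R_*$) with $p_3:=\hat h p_2\neq p_2$ also close to $p$ and outside $\overline{\mathcal N}\cup\mathcal R_*$, together with rational hyperplanes $L_{p_2}\ni p_2$, $L_{p_3}\ni p_3$ close to $L_u$ with $L_{p_i}\cap\mathcal A_*=\emptyset$, $L_{p_3}\neq\hat h L_{p_2}$, $p_2\notin L_{p_3}$, $p_3\notin L_{p_2}$. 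Two further applications of the enlargement argument (choosing $\d_2,\d_3$ small enough that assumptions 1 and 2 survive — again keeping $\mathcal N_+:=\mathcal N$, as all new balls are near $p$ hence disjoint from $\overline{\mathcal N}$) set up the hypotheses of Lemma \ref{throwing} with the element $\hat h$, yielding $\mathcal S_+\supseteq\mathcal S$ with $\langle\mathcal S_+,\hat h\rangle=\SL(n,\Z)$. Since $\hat h=h'^{-1}uh'$ and $u\in\langle\mathcal S_+\rangle$, we get $\langle\mathcal S_+,h'\rangle=\langle\mathcal S_+,\hat h\rangle=\SL(n,\Z)$, and $h'\in\{h,g^{-1}hg\}$ finishes the claim.

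The main obstacle is exactly the phenomenon described in the paragraph preceding the claim: when we apply assumption \ref{item b} we need $h'(p)\notin\mathcal N$ (open set), not merely $h'(p)\notin\mathcal A$, because the tangent-hyperplane-to-a-ball issue means that points of $\mathcal A$ arbitrarily close to the boundary of a ball in $\mathcal A$ admit no hyperplane through them missing $\mathcal A$; the buffer $\mathcal N\supseteq\mathcal A$ is what rules this out, and verifying that $\mathcal N$ can be kept \emph{fixed} throughout all the enlargements — i.e. that every new ball added to $\mathcal A$ lies well inside $\mathcal N$ or well away from the relevant region — is the delicate bookkeeping. The second subtlety is the degenerate case $h(p)=p$ (equivalently $h\in\stab(p)\ni g$), where both $h(p)$ and $g^{-1}hg(p)$ equal $p\in\P\setminus\overline{\mathcal N}$ automatically, so one simply takes $h'=h$ and proceeds; and the case where $h'(p)$ happens to land on $\mathcal A$ despite lying outside $\mathcal N$, which cannot occur since $\mathcal A\subseteq\mathcal N$.
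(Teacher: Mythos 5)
Your overall plan follows the paper's actual proof quite closely: the case split on $h$ versus $g^{-1}hg$, the use of assumption 2 at $y=h'(p)$ to produce $L_y$ and $p_y$, the insertion of $u$ with $L_u=L_y$ via Lemma~\ref{adding}, the observation that $\hat h=h'^{-1}uh'$ fixes $p$, and the final ping-pong near $p$ via Lemma~\ref{throwing}. But there is one genuine error that runs through the whole write-up and, if implemented, would break the induction.

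You repeatedly claim that the buffer set can be held fixed, i.e.\ $\mathcal N_*:=\mathcal N$ and later $\mathcal N_+:=\mathcal N$, on the grounds that the new balls added to $\mathcal A$ are ``disjoint from $\overline{\mathcal N}$ and from $g\overline{\mathcal N}$.'' This cannot work, for two reasons. First, by the definition of a quadruple Schottky system, $\mathcal N$ is required to be an open set \emph{containing} $\mathcal A$; the new attracting ball $[p_y]_\e$ (or $[p_x]_\d$ in your notation) is centered at $p_y\notin\overline{\mathcal N}$, so it lies entirely outside $\mathcal N$ and the quadruple $(\mathcal S_*,\mathcal A_*,\mathcal N,\mathcal R_*)$ is simply not a Schottky system in the paper's sense. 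Second, and more to the point, keeping $\mathcal N$ fixed would reintroduce exactly the tangent-hyperplane pathology that $\mathcal N$ was designed to prevent (which you yourself describe correctly in your closing paragraph): since $p_y\notin\mathcal N$, there are points $x\notin\mathcal N$ arbitrarily close to the boundary of the new ball $[p_y]_\e\subseteq\mathcal A_*$, and for such $x$ no hyperplane through $x$ misses $\mathcal A_*$. So assumption~2 would fail for the new system. The paper avoids this by enlarging $\mathcal N$ at each step ($\mathcal N_0:=\mathcal N\cup(p_y)_\d$ with $\d>\e$, and then $\mathcal N_+:=\mathcal N_0\cup(p_1)_\d\cup(p_2)_\d$), which leaves a definite gap of width $\d-\e$ between the closure of $\mathcal A$ and the complement of $\mathcal N$; verifying assumption~2 for the enlarged system then reduces to a compactness argument on the set of $x\notin\mathcal N_0$.

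This omission also hides a second, dependent step that you drop: because $\mathcal N$ does change, the paper must re-verify $\overline{\mathcal N_+}\cap g\overline{\mathcal N_+}=\emptyset$, and to make the four small balls around $p_1$, $fp_1$, $gp_1$, $gfp_1$ pairwise disjoint it first replaces $\hat h=h^{-1}uh$ by $f:=h^{-1}u^m h$ for a suitable $m\ge 1$ so that $\mathrm{id},f,g,gf$ are pairwise distinct. Your proposal uses $\hat h$ directly and never arranges this distinctness, which is harmless only under the (incorrect) assumption that $\mathcal N$ never changes. Finally, a minor point: the parenthetical ``unless $h(p)=p$'' in your case split is a red herring, since if $h(p)=p$ then $h(p)=p\notin\overline{\mathcal N}$ by assumption~1, so there is no degenerate case to handle.
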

\begin{proof} By replacing $h$ with $g^{-1}hg$ if necessary  we can assume that $y:=hp \not \in \mathcal{N}$. Under this assumption we will show that there exists a Schottky system $(\mathcal{S}_+ , \mathcal{A}_+, \mathcal{N}_+,\mathcal{R}_+) $ which satisfies assumption 1 and 2 and  $\langle \mathcal{S}_+,h\rangle=\SL(n,\Z)$.
Assumption  \ref{item b} implies that there exist  $y \in L_y \in \mathbb{L}_{n-2}$ and $p_y \in L_y$ such that 
$p_y \not \in \overline{\mathcal{N}}\cup \mathcal{R}$,  $gp_y \not \in \overline{\mathcal{N}}$ and $L_y \cap \mathcal{A}=\emptyset$.
By making small deformations we may assume that: $y$ and $L_y$ are rational, $gp_y \ne p_y$  and $p \not \in L_y$. A compactness argument implies that there exists $\delta>0$ such that:
\begin{enumerate}
\item[a)] $[p_y]_\delta\cap \mathcal{R}=\emptyset$ and $[L_y]_\delta\cap \mathcal{A} =\emptyset$;
\item[b)] $\overline{\mathcal{N}_0}\cap g\overline{\mathcal{N}_0}=\emptyset $ where $\mathcal{N}_0:=\mathcal{N} \cup (p_y)_{\delta}$;
\item [c)] $p \not \in \overline{\mathcal{N}_0} \cup \mathcal{R}_0 \cup g\overline{\mathcal{N}_0 }\cup g\mathcal{R}_0$ where $\mathcal{R}_0:=\mathcal{R}_0 \cup [L_y]_\delta$;
\item[d)] For every $x  \not \in \mathcal{N}$ there exists $x \in L_x \in \mathbb{L}_{n-2}$ and $p_x \in L_x$ such that 
$p_x \not \in \overline{\mathcal{N}_0}\cup \mathcal{R}_0$, $gp_x \not \in \overline{\mathcal{N}_0}$ and $L_x \cap \mathcal{A}=\emptyset$.
\end{enumerate}

An additional compactness argument  implies that there exists $\delta>\e>0$ such that: 

\begin{enumerate}
\item[e)] If $x \not \in \mathcal{N}_0$ then there exists $x \in L_x \in \mathbb{L}_{n-2}$ and $p_x \in L_x$ such that 
$p_x \not \in \overline{\mathcal{N}_0}\cup \mathcal{R}_0$,  $gp_x \not \in \overline{\mathcal{N}_0}$   and $L_x \cap \mathcal{A}_0=\emptyset$ where
$\mathcal{A}_0 := \mathcal{A} \cup [p_y]_\e $.
\end{enumerate}

Corollary \ref{adding} and items (a), (b), (c) and (e) implies that there exists $u\in \mathcal{U}$ such that 
$p_u=p_y$, $L_u=L_y$ and $(\mathcal{S}_0 , \mathcal{A}_0, \mathcal{N}_0,\mathcal{R}_0) $ is a Schottky system which still satisfies Assumptions 1 and 2
where $\mathcal{S}_0=\mathcal{S} \cup\{u\}$. Note that $h^{-1}u h p =p$ and that if $(\mathcal{S}_+ , \mathcal{A}_+, \mathcal{N}_+,\mathcal{R}_+) $ contains $(\mathcal{S}_0 , \mathcal{A}_0, \mathcal{N}_0,\mathcal{R}_0) $ and  $\langle \mathcal{S}_+,h^{-1}uh\rangle=\SL(n,\Z)$  then also $\langle \mathcal{S}_+,h\rangle=\SL(n,\Z)$. 

Since $h^{-1}uh$ has infinite order there exists some $m \ge 1$ such that $\id,f,g,gf$ are pairwise distinct elements where $f:=h^{-1}u^mh$. 
Thus, every open neighbourhood of $p$ contains a rational point $p_1$ such that $p_1$, $p_2:=fp_1$, $p_3:=gp_1$ and $p_4:=gfp_1$ are 4 distinct points which are
contained in this neighbourhood. If this neighbourhood does not intersect $\mathcal{N}_0$ then assumption \ref{item b}  implies that for every $1 \le i \le 2$ there exists an $(n-2)$-subspace 
$p_i \in L_i \in \mathbb{L}_{n-2}$  such that $L_i \cap \mathcal{A}_0=\emptyset$. By using small deformations we can assume that the $L_i$'s and $p_i$'s  are rational, $L_2 \ne f L_1$, $p\not \in L_i$  and  
$p_j \not \in L_i$ for every $1 \le i \ne j \le 4$.

A compactness arguments implies that there exists $\delta>0$ such that:

\begin{enumerate}
\item[a)] For every $1 \le i\ne j  \le 2$, $[p_i]_\delta\cap (\mathcal{R}_0 \cup [L_j]_\delta)=\emptyset$ and $[L_i]_\delta\cap \mathcal{A}_0 =\emptyset$;
\item[b)] $\overline{\mathcal{N}_+}\cap g\overline{\mathcal{N}_+}=\emptyset $ where $\mathcal{N}_+:=\mathcal{N}_0 \cup  (p_1)_{\delta} \cup (p_2)_\delta$;
\item [c)] $p \not \in \overline{\mathcal{N}_+} \cup \mathcal{R}_+\cup g\overline{\mathcal{N}_+ }\cup g\mathcal{R}_+$ where $\mathcal{R}_+:=\mathcal{R}_0 \cup  [L_1]_\delta\cup [L_2]_\delta$;
\item[d)] If $x \not \in \mathcal{N}_0$ then there exists $x \in L_x \in \mathbb{L}_{n-2}$ and $p_x \in L_x$ such that 
$p_x \not \in \overline{\mathcal{N}_+}\cup \mathcal{R}_+$, $gp_x \not \in \overline{\mathcal{N}_+}$ and $L_x \cap \mathcal{A}_0=\emptyset$.
\end{enumerate}
An additional compactness argument implies that there exists $\delta>\e>0$ such that:
\begin{enumerate}
\item[e)] If $x \not \in \mathcal{N}_+$ then there exists $x \in L_x \in \mathbb{L}_{n-2}$ and $p_x \in L_x$ such that 
$p_x \not \in \overline{\mathcal{N}_+}\cup \mathcal{R}_0$,  $gp_x \not \in \overline{\mathcal{N}_+}$   and $L_x \cap \mathcal{A}_+=\emptyset$ where
$\mathcal{A}_+ := \mathcal{A} \cup  [p_1]_\e\cup [p_2]_\e $.
\end{enumerate}
Corollary \ref{throwing} and items (a), (b), (c) and (e) imply that there exists $\mathcal{S}_+ \supseteq \mathcal{S}$ such that 
$(\mathcal{S}_+, \mathcal{A}_+,\mathcal{N}_+,\mathcal{R}_+)$ is a Schottky system which satsfies items 1 and 2 and $\langle \mathcal{S}_+ , f \rangle =\SL(n,\Z)$. 
\end{proof}

We are now ready to complete the proof of Theorems \ref{thm trivial} and \ref{thm not 2-trans}. Let $(h_i)_{i \ge 1}$
be an enumeration of the non-identity elements of $\SL(n,\Z)$. Let $p \in \P$ be fixed by a non-identity element $g \in \SL(n,\Z)$. Let $k \in \SL(n,\Z)$ be an element such that $\id,k,k^2,g,gk,gk^2$ are pairwise  distinct
elements. Claims \ref{cal 1} and \ref{cal 2} allow us the recursively construct 
ascending sequence $(\mathcal{S}_i,\mathcal{A}_i,\mathcal{N}_i,\mathcal{R}_i)_{i \ge 0}$ of profintely-dense Schottky systems such that 
$k \langle \mathcal{S}_0 \rangle k^{-1} \cap \langle \mathcal{S}_0 \rangle\ne  \{\id\}$ and  $\langle \mathcal{S}_0 ,k \rangle  =\SL(n,\Z)$ and  for every $i \ge 1$ either $\langle h_i, \mathcal{S}_i\rangle =\SL(n,\Z) $
or $\langle g^{-1}h_ig, \mathcal{S}_i\rangle =\SL(n,\Z) $. The subgroup $H:=\langle \mathcal{S}_i \mid i \ge 0\rangle$ has the required properties stated in the first paragraph of this section. 

\section{The  $\SL(n,\Q)$ case}\label{SL(n,Q)}

Margulis' and Soifer's paper is concerned only with finitely   generated group. In \cite{GG08} the first author and Glasner studied 
infinite index maximal subgroups in general linear groups and proved a criterion for their existence. The goal of this section is
to show that Theorems   \ref{thm trivial}, \ref {thm not 2-trans} and \ref {thm dense} hold also for $\SL(n,\Q)$. Since the proofs are very similar to the ones given for $\SL(n,\Z)$, we only sketch the required modifications needed for proving the $\SL(n,\Q)$ version of Theorem   \ref{thm trivial}. 

The first modification concerns the elements in the generating set $\mathcal{S}$. It is not enough anymore to deal only with unipotent elements. We say that an element $g \in \SL(n,\Q)$ is strongly-semisimple if 
it has a unique eigenvalue $\lambda^+_g$ of maximal absolute value and a unique eigenvalue $\lambda^-_g$ of minimal absolute value. Let $p_g^+$ be the projective point corresponding to the  $\lambda^+_g$-eigenspace and $L^+_g \in\mathbb{L}_{n-1}$ the projective hyperplane corresponding to $\textrm{Im}(\mathrm{Id}-\lambda_g^+)$. Denote 
$p^{-}_g:=p^+_{g^{-1}}$ and $L^{-}_g:=L^+_{g^{-1}}$. To simplify the notation, if $u\in \mathcal{U}$ (in particular $u$ is not strongly-semisimple) we denote $p^+_u=p^-_u:=p_u$ and $L^+_u=L^-_u:=L_u$.

\begin{defin} Assume that $\mathcal{S}$ is a non-empty subset of $\SL(n,\Q)$ consisting of elements which are either strongly-semisimple or unipotent of rank 1.
Let $\mathcal{A} \subseteq \mathcal{R}$ be closed subsets of $\P$. We  call the triple 
$(\mathcal{S},\mathcal{A},\mathcal{R})$ a Schottky system if for every $g \in \mathcal{S}$ there exist two positive
numbers  $\delta_g \ge \e_g$ such that the following properties hold:
\begin{enumerate}
\item $g^{k}(x) \in (p^+_g,p^-_g)_{\varepsilon_g}$ for every $x \in \P\setminus (L^+_g \cup L^-_g)_{\delta_g}$ and every  $k \ne 0$;
\item If $g \ne h \in \mathcal{S}$ then $(p_g^+,p_g^-)_{\varepsilon_g} \cap (L_h^+\cup L_h^-)_{\delta_h}=\emptyset$;
\item $\cup_{g\in \mathcal{S}}(p^+_g,p^-_g)_{\varepsilon_g} \subseteq \mathcal{A}$;
\item $\cup_{g \in \mathcal{S}}(L_h^+, L_h^-)_{\delta_v} \subseteq \mathcal{R}$.
\end{enumerate}
\end{defin} 

The basic idea in the proof of Theorem \ref{thm dense} is to pick two small enough disjoint open subset $U, V \subseteq \P$
and to construct enumeration $(g_k)_{k \ge 1}$ of all the elements $g \in \SL(n,\Z)$ such that $gU\cap V =\emptyset$. (In the proof we passed to the dual space but it is merely a convenience.)  The second step is to construct a unipotent Schottky system  
$(\mathcal{S}_0, \mathcal{A}_0,\mathcal{R}_0)$ such that the following items hold for $k=0$:
\begin{enumerate}
\item[(1)] $\langle \mathcal{S}_k\rangle$ is profintiely dense in $\SL(n,\Z)$;
\item[(2)] $\langle \mathcal{S}_k\rangle \cap \{g_i \mid i \ge 1\}=\emptyset$;
\item[(3)] There exists $\mathcal{S}_{k+1}\supseteq \mathcal{S}_k$, $\mathcal{A}_{k+1}\supseteq \mathcal{A}_k$ 
and $\mathcal{R}_{k+1}\supseteq \mathcal{R}_k$ such that $\langle \mathcal{S}_{k+1},g_{k+1}\rangle =\SL(n,\Z)$ and the first two items still hold for $(\mathcal{S}_{k+1}, \mathcal{A}_{k+1},  \mathcal{R}_{k+1})$.
\end{enumerate}

The main part of the proof of Theorem \ref{thm dense} is to inductively construct an  ascending sequence $(\mathcal{S}_k, \mathcal{A}_k,\mathcal{R}_k)_{k \ge 0}$ such that for every $k\ge 0$ items (1)--(3) hold.  Any infinite index maximal subgroup $\Delta$ of $\SL(n,\Z)$ which contains $\Lambda:=\langle\cup_{k \ge 1}\mathcal{S}_k\rangle$ has empty intersection with $ \{g_k \mid k \ge 1\}$.  The existence of $\Delta$ follows from Zorn's lemma. 

The fact that the $g_k$'s belong to $\SL(n,\Z)$ is not used in the construction, and it is possible to take enumeration 
$(g_k)_{k \ge 1}$ of all the elements $g \in \SL(n,\Q)$ such that $gU\cap V =\emptyset$  and to inductively construct an  ascending sequence $(\mathcal{S}_k, \mathcal{A}_k,\mathcal{R}_k)_{k\ge 0}$ such that for every $k\ge 0$ items (1)--(3) hold where the equality $\langle \mathcal{S}_{k+1},g_{k+1}\rangle =\SL(n,\Z)$ in item (3) is replaced with  $\langle \mathcal{S}_{k+1},g_{k+1}\rangle \supseteq\SL(n,\Z)$.  However, it is not possible to directly apply Zorn's lemma since 
$\SL(n,\Q)$ is not finitely generated and the union of ascending sequence of proper subgroups of $\SL(n,\Q)$ might be equal to $\SL(n,\Q)$. Thus, $\Lambda$ might not be contained in a maximal subgroup. An additional problem is that even if $\Lambda$ is contained in a maximal subgroup then this maximal subgroup might contain $\SL(n,\Z)$ and have a non-empty intersection with $ \{g_k \mid k \ge 1\}$.

In order to overcome these problems we take enumeration $(\mathcal{C}_k)_{k\ge 1}$ of the left cosets of $\SL(\Z[1/p])$ in 
$\SL(n,\Q)$ where $p$ is some odd prime. One start with  a unipotent Schottky system $(\mathcal{S}_0, \mathcal{A}_0,\mathcal{R}_0)$ in $\SL(\Z[1/p])$ such that the following items hold for $k=0$:

\begin{enumerate}
\item[($1^*$)] $\langle \mathcal{S}_k \cap \SL(n,\Z)\rangle$ is profintiely dense in $\SL(n,\Z)$ and $\mathcal{S}_k\setminus \SL(n,\Z) \ne \emptyset$;
\item[$(2^*)$] $\langle \mathcal{S}_k\rangle \cap \{g_i \mid i \ge 1\}=\emptyset$;
\item[$(3^*)$] There exists $\mathcal{S}_{k+1}\supseteq \mathcal{S}_k$, $\mathcal{A}_{k+1}\supseteq \mathcal{A}_k$ 
and $\mathcal{R}_{k+1}\supseteq \mathcal{R}_k$ such that:
\begin{enumerate}
\item  The first two items still hold for $(\mathcal{S}_{k+1}, \mathcal{A}_{k+1},  \mathcal{R}_{k+1})$;
\item $\langle \mathcal{S}_{k+1},g_{k+1}\rangle \supseteq \SL(n,\Z)$;
\item If $k \ne 0$, $\langle \mathcal{S}_{k+1}\rangle\cap \mathcal{C}_k \ne \emptyset$.
 \end{enumerate}
\end{enumerate}

It is possible to inductively construct an  ascending sequence $(\mathcal{S}_k, \mathcal{A}_k,\mathcal{R}_k)$ such that for every $k$ items $(1^*)$--$(3^*)$ hold. The new requirement, Part (c) of item $(3^*)$, brings almost no difficulty since $\SL(n,\Z[1/p])$ is dense in $\SL(n,\R)$ so in every coset it is easy to find an element which still plays ping-pong with the elements of $\mathcal{S}_k$. Note that this is the place where we need to deal with strongly semi-simple elements since an element which is very close to a strongly semi-simple element is strongly semi-simple while an element which is very close to a unipotent element might not be unipotent. Denote $\Lambda:=\langle \mathcal{S}_k \rangle$. 
An ascending sequence of proper subgroups which contain  $\Lambda$ cannot be equal to $\SL(n,\Q)$. Indeed, if this happens then one of the subgroups in the union contains the finitely generated group  $\SL(n,\Z[1/p])$ and has non-trivial intersections with all the cosets of $\SL(n,\Z[1/p])$ so it is equal to
$\SL(n,\Q)$, a contradiction. Zorn's lemma implies that $\Lambda$ is contained in some maximal subgroup $\Delta$. We need to show that  $\Delta \cap \{g_k \mid k \ge 1\} = \emptyset$. Assume otherwise, then item $(3^*)$ implies that 
$\Delta$ contains $\SL(n,\Z)$. Since $\SL(n,\Z)$ is a maximal subgroup of $\SL(n,\Z[1/p])$, item $(1^*)$ implies that $\Delta$
contains $\SL(n,\Z[1/p])$. But $\Delta$ non-trivially intersects all the cosets of $\SL(n,\Z[1/p])$, so $\Delta=\SL(n,\Q)$, a contradiction.

\section{Questions}

\begin{que} \normalfont Does $\SL(n,\Z)$ have a finitely generated infinite-index maximal subgroup? This question appears in Margulis and Soifer's paper and it is still open. Note that general finitely generated linear groups which are not virtually-solvable might but don't have to contain such subgroups. For example, non-commutative free groups do not 
have finitely generated infinite index maximal subgroups. On the other hand, as pointed out by Yair Glasner, the finitely generated linear group $\SL(n,\Z[1/p])$, where $p$ is a prime integer, contains the finitely generated subgroup $\SL(n,\Z)$ as a maximal subgroup. 
\end{que}

\begin{que}\label{question general} \normalfont Margulis and Soifer proved that every finitely generated linear group which is not virtually-solvable contains uncountably many maximal subgroups.  The proof of Theorem \ref{thm counting} uses Venkatramana's theorem which in turn is based on the congruence subgroup property.  Thus, our techniques do not give an answer to  the following question:
Does every  finitely generated linear group which is not virtually-solvable contains $2^{\aleph_0}$  maximal subgroups?
\end{que}

\begin{que}  \normalfont Does $\SL(n,\Z)$ contain two infinite-index non-isomorphic maximal subgroups? 
\end{que}
When $n \ge 4$ Margulis and Soifer proved that $\SL(n,\Z)$ contains an infinite-index maximal subgroup which has a subgroup isomorphic to $\Z^2$.  It is also not hard to construct infinite-index maximal subgroups of $\SL(n,\Z)$ which have non-trivial  torsion elements for $n \ge 3$. Thus, a positive answer to any one of the following three questions 
would provide an example of two infinite-index non-isomorphic maximal subgroups: 

\begin{que}
 Does $\SL(n,\Z)$ contain a torsion-free infinite-index maximal subgroup? 
\end{que}
 
\begin{que}
 Does $\SL(n,\Z)$ contain an infinite-index maximal subgroup
which does not have a subgroup isomorphic to $\Z^2$? 
\end{que}

\begin{que}
Does $\SL(n,\Z)$ contains a maximal subgroup which is a free group? 
\end{que}
Note that it is possible that all the  infinite-index maximal subgroups of a given finitely generated linear group are isomorphic. For example, this is the case for a non-commutative free group.

\begin{que}\label{complex question}  \normalfont An element $g \in \SL(3,\Z)$ is called complex if for every $m \ge 1$ the matrix 
$g^m$ has a non-real eigenvalue.  Is it possible that an infinite-index Zariski-dense subgroup of $\SL(3,\Z)$ 
contain a complex element? The thin subgroups that arise from standard ping-pong arguments on projective space do not contain complex elements. More surprisingly, the thin surface subgroups constructed in the work of Long--Reid--Thistlethwaite 
\cite{LRT11} do not contain complex elements. 
A positive answer to the above question would provide the first example (to the best of our knowledge)  of a thin group whose limit set is $\P$ (In fact this group would act minimally on pairs of distinct elements of projective space). 
A negative answers would provide a very strong restriction on the structure of thin groups.  
\end{que}

\begin{que}  \normalfont Very little is known about permutation actions of $\SL(n,\Z)$. In light of theorem \ref{thm not 2-trans} it is natural to ask if $\SL(n,\Z)$ have 2-transitive actions? More generally, does $\SL(n,\Z)$ have highly transitive actions? In an upcoming joint paper with Glasner we prove that all convergence groups (and in particular, linear groups of real-rank 1) have highly transitive permutation actions. Another question of this nature is: Does $\SL(n,\Z)$ have non-equivalent primitive permutation actions?
\end{que}

\begin{que}  \normalfont Call a subset $S \subseteq \SL(n,\Z)$ fat if  $[\SL(n,\Z):\langle S\rangle]=\infty$ but every 
Zariski-dense subgroup of $\SL(n,\Z)$  which contains $S$ is of finite index. Venkataraman's theorem 
(Theorem \ref{Thm - Venkataramana} above) implies that if $u,v$ are unipotent elements, $u$ has rank 1 and 
$\langle u,v\rangle$ is isomorphic to $\Z^2$ then $\{v,u\}$ is a fat subset. Question \ref{complex question} asks if $\{g\}$ is a fat subset whenever $g$ is a complex element. What are the fat subsets of $\SL(n,\Z)$?
\end{que}

\end{document}